\documentclass[preprint,11pt]{elsarticle}
\usepackage{stmaryrd}
\usepackage{amsmath,amssymb,amsthm}
\usepackage{enumitem}
\usepackage{url}
\usepackage[colorlinks=true,linkcolor=blue,citecolor=blue,urlcolor=blue]{hyperref}

\newcommand{\Q}{\mathbb Q}

\def \Z{\Bbb Z}
\def \C{\Bbb C}

\def \Q{\Bbb Q}

\def \wt{{\textit wt}}

\def \Res{{\rm Res}}
\def \End{{\rm End}}
\def \Aut{{\rm Aut}}

\def \Id{{\rm Id}}
\def \Hom{{\rm Hom}}

\def \im{{\rm Im}}
\def \<{\langle}
\def \>{\rangle}

\def \1{{\bf 1}}

\def \({{\rm (}}
\def \){{\rm )}}

\def \1{{\bf 1}}

\def\Hom{{\rm Hom}}

\def\Res{{\rm Res}}

\def\soc{{\rm soc}}
\newtheorem{Theorem}{Theorem}[section]
\newtheorem{Proposition}[Theorem]{Proposition}
\newtheorem{Lemma}[Theorem]{Lemma}

\newtheorem{Corollary}[Theorem]{Corollary}
\newtheorem{Remark}[Theorem]{Remark}
\newtheorem{Main Theorem}[Theorem]{Main Theorem}

\newtheorem{Definition}[Theorem]{Definition}
\begin{document}

\begin{frontmatter}

\title{A dual theory of twisted Zhu's theory}

\author[add]{Hao Wang\footnote{Email address: whaomath@nwu.edu.cn. Supported by a NSFC grant 12001426}}

\address[add]{School of Mathematics, Northwest University, Xi'an 710127, Shaanxi, China}

\begin{abstract}
The notion of admissible $g$-twisted $V$-comodules is investigated for a graded vertex operator coalgebra $V$ and a finite order automorphism $g\in \Aut V$. We prove that $\mathcal {M}$ is an admissible $g$-twisted $V$-comodule if and only if its graded dual $\mathcal {M}'$ is an admissible $g^{-1}$-twisted $V'$-module for vertex operator algebra $V'$. Then we establish a dual theory of twisted Zhu's theory, i.e., there is a coassociative coalgebra $C_g(V)$ for a graded vertex operator coalgebra $V$, any admissible $g$-twisted $V$-comodule gives a $C_g(V)$-comodule, and vice versa. We also prove that $V$ is $g$-corational, which means every admissible $g$-twisted $V$-comodule is completely reducible, if and only if its dual vertex operator algebra $V'$ is $g$-rational.
\end{abstract}

\begin{keyword}
vertex operator algebras, vertex operator coalgebras, twisted Zhu's algebras, twisted Zhu's coalgebras, twisted admissible comodules, corationality.
\MSC[2020] 17B69
\end{keyword}

\end{frontmatter}

\section{Introduction}
In algebraic theory, algebras and coalgebras are a pair of notions which is dual to each other, such as associative algebras and coassociative coalgebras, \cite{DNR,K}, Lie algebras and Lie coalgebras, \cite{M,M1}. Those dual properties also apply to their representations. Vertex operator algebras is another kind of algebras which looks like both associative algebras and Lie algebras, \cite{BPZ,FLM,LL}. In \cite{H,H2}, the author introduces a notion which is called (graded) vertex operator coalgebras. This makes vertex operator algebras and (graded) vertex operator coalgebras to be a pair of notions which is dual to each other. Those dual properties also apply to their representations, see \cite{H1}.

In the representation theory of vertex operator algebras, Zhu's algebras plays a crucial role since it builds a bridge between representations of vertex operator algebra and representations of its Zhu algebra, \cite{DLM1,Z}. In \cite{W}, the author also establishes a dual theory of Zhu's theory, namely, for a graded vertex operator coalgebra $V$, there is a coassociative coalgebra $C(V)$, such that every admissible $V$-comodule gives a $C(V)$-comodule, and vice verse. The author also proves that $V$ has a cosemisimple admissible comodules category if and only if its dual vertex operator algebra $V'$ has a semisimple admissible modules category under certain assumptions.

In vertex operator algebras theory, there is a famous conjecture, called orbiford conjecture. The appearance of twisted admissible modules is the main feature of the orbiford theory, see \cite{DLM4,DRX}. In the study of twisted representations, there is also a twisted Zhu's theory, i.e., for a vertex operator algebra $\mathcal {V}$ and a finite automorphism $g\in\Aut \mathcal {V}$, there is an associative algebra $A_g(\mathcal {V})$, such that every $g$-twisted admissible $V$-module gives an $A_g(\mathcal {V})$-module, and vice verse, see \cite{DLM2,DLM3}. This twisted theory has a lot of generalizations, see \cite{DZz1,DZz2}. From the dualities between vertex operator algebras and (graded) vertex operator coalgebras and their admissible representations, we also hope to establish a twisted theory for (graded) vertex operator coalgebras which is the dual theory of twisted Zhu's theory for vertex operator algebras. This is our motivation.

Let $V$ be a graded vertex operator coalgebra and $g\in\Aut V$ a finite order automorphism. To establish the dual theory of twisted Zhu's theory, the notion of admissible $g$-twisted $V$-comodules is necessary. For this purpose, we first introduce the notion of $g$-twisted $V$-comodules. Then, we prove that $g^{-1}$ is an automorphism of the dual vertex operator algebra $V'$, and $\mathcal {M}$ is an admissible $g$-twisted $V$-comodule if and only if the graded dual space $\mathcal {M}'$ is an admissible $g^{-1}$-twisted $V'$-module and vice verse.

Let $\mathcal {V}$ be a vertex operator algebra and $g\in\Aut\mathcal {V}$ a finite order automorphism, the associative algebra $A_g(\mathcal {V})$ is defined to a quotient space of $\mathcal {V}$. Using the theorems of homomorphisms, $A_g(\mathcal {V})$ is actually a quotient space of the fixed points vertex operator sub algebra $\mathcal {V}^g$, \cite{DLM2,DLM3}. When we take the dual spaces, quotient spaces become sub spaces and vice verse. Hence for a (graded) vertex operator coalgebra $V$ and a finite order automorphism $g\in\Aut V$, we define the coassociative coalgebra $C_g(V)$ to be a sub space of $V$, then we also prove that $C_g(V)$ is actually a sub space of the fixed quotient vertex operator coalgebra $V^g$. It should be noticed that the fixed points subspace is not a (graded) vertex operator coalgebra any more. Then we define a well-defined coproduct $\Delta_g:C_g(V)\rightarrow C_g(V)\otimes C_g(V)$, and prove that $\Delta_g$ is coassociative. This proves $C_g(V)$ is a coassociative coalgebra under certain assumptions. After getting the coassociative coalgebra $C_g(V)$, we prove that every admissible $g$-twisted $V$-comodule gives a $C_g(V)$-comodule. Hence we get a functor $\Omega_g$ from admissible $g$-twisted $V$-comodules category to $C_g(V)$-comodules category. This completes the first half of the twisted theory for (graded) vertex operator coalgebras.

Conversely, we also hope to get an admissible $g$-twisted $V$-comodule $\mathcal {L}_g(M)$ from a given $C_g(V)$-comodule $M$. We use the method given in \cite{W}. First we work on the relations between $C_g(V)$ and the associative algebra $A_{g^{-1}}(V')$ associated to $V'$. Under certain assumptions, we prove that there is an isomorphism $\Phi_g:A_{g^{-1}}(V')\rightarrow C_g(V)^*.$ Hence for a $C_g(V)$-comodule $M$, $M^*$ is a $C_g(V)^*$-module, thus an $A_{g^{-1}}(V')$-module. Now using Dong-Li-Mason's theory, there is an admissible $g^{-1}$-twisted $V'$-module $L_{g^{-1}}(M^*)$ such that $L_{g^{-1}}(M^*)_0=M^*,$ see \cite{DLM2}. Then using the dual relations between admissible $g$-twisted $V$-comodules and admissible $g^{-1}$-twisted $V'$-modules, we arrive at an admissible $g$-twisted $V$-comodule $L_{g^{-1}}(M^*)'.$ In general, this admissible $g$-twisted $V$-comodule is too huge, and it is not what we want. We define $\mathcal {L}_g(M)$ to be a sub quotient of $L_{g^{-1}}(M^*)'$, and prove that $\mathcal {L}_g(M)_0=M$. Furthermore, if $\dim M$ is countable, then each homogeneous subspace of $\mathcal {L}_g(M)$ has countable dimension. Hence we get a functor from $C_g(V)$-comodules category to admissible $g$-twisted $V$-comodules category, such that $\Omega_g\circ\mathcal {L}_g=\Id$, and $\mathcal {L}_g$ also sends simple objects to simple objects. This completes the other half of the twisted theory for (graded) vertex operator coalgebras.

Using this twisted theory, we prove that if $V$ is $g$-corational, i.e., every admissible $g$-twisted $V$-comodule is completely reducible, then $C_g(V)$ is a cosemisimple coassociative coalgebra under certain assumptions.

Finally, we also want to study the relations between $g$-corationality of $V$ and $g$-rationality of $V'$. For this purpose, we need to introduce the higher level twisted coassociative coalgebras and establish a higher level twisted theory for (graded) vertex operator coalgebras which is the dual theory of higher level twisted Zhu's theory for vertex operator algebras. After doing that, we prove that $V$ is $g$-corational if and only if all $C_g^k(V)$ are cosemisimple coassociative coalgebras for all $k\in\frac{1}{T}\mathbb{N}$, where $T$ is the order of $g$. Then using a result of \cite{DLM3}, we prove that $V$ is $g$-corational if and only if $V'$ is $g^{-1}$-rational. Again using a result of \cite{DLM3} and \cite{DJ1}, we can prove that $V$ is $g$-corational if and only if $V'$ is $g$-rational. This is a generalization of a classical result, i.e., $A$ is a semisimple associative algebra if and only if $A^*$ is a cosemisimple coassociative coalgebra.

This paper is organised as follows: In section 2, we recall several definitions and properties about associative algebras and coassociative coalgebras, their modules and comodules briefly. We also recall definitions and properties of vertex operator algebras and vertex operator coalgebras, admissible $g$-twisted modules. In section 3, we first introduce the notion of admissible $g$-twisted $V$-comodules for a graded vertex operator coalgebra $V$ and a finite order automorphism $g\in\Aut V$. Then we prove that $\mathcal {M}$ is an admissible $g$-twisted $V$-comodule if and only if $\mathcal {M}'$ is an admissible $g^{-1}$-twisted $V'$-module and vice verse. Section 4 is about the coassociative coalgebra $C_g(V)$. Section 5 and 6 are the constructions of functors $\Omega_g$ and $\mathcal {L}_g$. We prove that if $V$ is $g$-corational, then $C_g(V)$ is a cosemisimple coassociative coalgebra under certain assumptions in section 6. In section 7, we introduce the higher level coassociative coalgebras, and prove that $V$ is $g$-corational if and only if $V'$ is $g$-rational.

\section{Preliminaries}

In this paper, we work over complex field $\mathbb{C},$ all vector spaces, linear maps will be over $\mathbb{C},$ $\otimes$ means $\otimes _\mathbb{C}$. $\Res_zf(z)$ is the coefficient of $z^{-1}$. Furthermore, we have $$\Res_z\frac{d}{dz}f(z)\cdot g(z)=-\Res_zf(z)\cdot\frac{d}{dz}g(z).$$
For any vector spaces $V,W$, the flipping map $\tau:V\otimes W\rightarrow W\otimes V$ is defined by $$\tau(v\otimes w)=w\otimes v,$$ for any $v\in V,w\in W.$ In this paper, we always view $V\otimes \mathbb{C}=\mathbb{C}\otimes V=V$ naturally. For $n\in\mathbb{C}$, $(z_1+z_2)^n=\sum_{i\geq0}\tbinom{n}{i}z_1^{n-i}z_2^i.$ Let $V$ be a vector space, denote $$V^*=\Hom(V,\mathbb{C})$$ by the dual space of $V$. Let $(\cdot,\cdot)$ be the natural pair between $V^*$ and $V$, and $$V\{z\}=\{\sum_{k\in\mathbb{C}}v_kz^k|v_k\in V\}.$$

\subsection{Algebras and coalgebras}
In this subsection, we recall several concepts and results about algebras and coalgebras.
\begin{Definition}\cite{DNR}\label{DA2.1}
  An associative algebra is a triple $(A,\mu,\eta)$, where $A$ is a vector space, $\mu: A\otimes A\rightarrow A$ and $\eta:\mathbb{C}\rightarrow A$ are linear maps such that:

  (i) For any $a\in A$, we have $\mu(a\otimes\eta(1))=\mu(\eta(1)\otimes a)=a$. There are called unit identities.

  (ii) For any $a,b,c\in A$, the following identity $$\mu(a\otimes\mu(b\otimes c))=\mu(\mu(a\otimes b)\otimes c)$$ holds. This identity is called associativity. We write $\mu(a\otimes b)=ab$.
\end{Definition}

\begin{Definition}\cite{DNR}\label{DA2.2}
  Let $(A,\mu,\eta)$ be an algebra and $B\subseteq A$ a subspace of $A$. If $(B,\mu,\eta)$ is also an algebra, we say it is a sub algebra of $A$.

  If $\mu(A\otimes B)\subseteq B$, we say $B$ is a left ideal of $A$.

  If $\mu(B\otimes A)\subseteq B$, we say $B$ is a right ideal of $A$.

  $B$ is an ideal if it is both a left ideal and right ideal.

  $A$ is called simple if there is no nontrivial ideal. $A$ is called semisimple if $A$ is a direct sum of simple ideals.
\end{Definition}

\begin{Definition}\cite{DNR}
Let $(A,\mu_A,\eta_A)$ and $(B,\mu_B,\eta_B)$ be two algebras. A linear map $\phi:A\rightarrow B$ is called a homomorphism if $$\phi\circ\eta_A=\eta_B,\phi\circ\mu_A=\mu_B\circ\phi\otimes\phi.$$

  Similarly, we have the definitions of monomorphism, epimorphism and isomorphism, etc.
\end{Definition}

\begin{Definition}\cite{DNR}\label{DAM2.1}
  Let $(A,\mu,\eta)$ be an algebra, $N$ a vector space and $\varrho:A\otimes N\rightarrow N$ a linear map. $(N,\varrho)$ is called a left module of $A$ if

  (i) For any $n\in N$, we have $\varrho(1\otimes n)=n$. This is called left unit identity.

  (ii) For any $a,b\in A,n\in N$, we have $\varrho(a\otimes\varrho(b\otimes n))=\varrho(\mu(a\otimes b)\otimes n)$. This is called associative identity. We write $\varrho(a\otimes n)=an.$
\end{Definition}

\begin{Definition}\cite{DNR}\label{DAM2.2}
  Let $(A,\mu,\eta)$ be an algebra and $(N,\varrho_N)$ a left module of $A$. Let $N^1$ be a sub space of $N$. If $(N^1,\varrho_N|_{N^1})$ is also a left module of $A$, we say $N^1$ is a sub module of $N$.

  $N$ is called simple if there is no nontrivial sub module. $N$ is called semisimple if $N$ is a direct sum of simple sub modules.

  Let $(N^1,\varrho_{N^1})$ and $(N^2,\varrho_{N^2})$ be two left modules of $A$. A linear map $\psi:N^1\rightarrow N^2$ is called an $A$-module homomorphism if $$\psi\circ\varrho_{N^1}=\varrho_{N^2}\circ(\Id\otimes \psi).$$

  Similarly, we have the definitions of monomorphism, epimorphism and isomorphism, etc.
\end{Definition}

\begin{Definition}\cite{DNR}\label{DC2.1}
  A coassociative coalgebra is a triple $(C,\Delta,\epsilon)$, where $C$ is a vector space, $\Delta:C\rightarrow C\otimes C$ and $\epsilon:C\rightarrow \mathbb{C}$ are linear maps such that:

  (i) For any $a\in C$, the following identities $$(\Id\otimes \epsilon)\circ \Delta(a)=a=(\epsilon\otimes \Id)\circ \Delta(a)$$ hold. There are called counit identities.

  (ii) For any $a\in C$, the following identity $$(\Id\otimes \Delta)\circ \Delta(a)=(\Delta\otimes \Id)\circ \Delta(a)$$ holds. This identity is called coassociativity.
\end{Definition}

\begin{Remark}
By coassociativity, we will write $\Delta(a)=\sum a'\otimes a''$ for simplicity, and this will cause no confusion.
\end{Remark}

\begin{Definition}\cite{DNR}\label{DC2.2}
  Let $(C,\Delta,\epsilon)$ be a coalgebra and $D\subseteq C$ a subspace of $C$. If $\Delta(D)\subseteq D\otimes D$, we say $(D,\Delta,\epsilon)$ is a sub coalgebra of $C$.

  If $\Delta(D)\subseteq C\otimes D$, we say $(D,\Delta,\epsilon)$ is a left coideal of $C$.

  If $\Delta(D)\subseteq D\otimes C$, we say $(D,\Delta,\epsilon)$ is a right coideal of $C$.

  If $\Delta(D)\subseteq C\otimes D+D\otimes C$ and $\epsilon(D)=0$, we say $(D,\Delta,\epsilon)$ is a coideal of $C$.

  $C$ is called simple if there is no nontrivial sub coalgebra. $C$ is called cosemisimple if $C$ is a direct sum of simple sub coalgebras.
\end{Definition}

\begin{Definition}\cite{DNR}\label{DCM2.1}
  Let $(C,\Delta,\epsilon)$ be a coalgebra, $M$ a vector space and $\Delta_M:M\rightarrow C\otimes M$ a linear map. $(M,\Delta_M)$ is called a left comodule of $C$ if

  (i) For any $m\in M$, the following identity $$(\epsilon\otimes \Id)\circ \Delta_M(m)=m$$ holds. This is called left counit identity.

  (ii) For any $m\in M$, the following identity $$(\Id\otimes \Delta_M)\circ \Delta_M(m)=(\Delta\otimes \Id)\circ \Delta_M(m)$$ holds. This is called coassociative identity.
\end{Definition}

\begin{Definition}\cite{DNR}
Let $(C,\Delta_C,\epsilon_C)$ and $(D,\Delta_D,\epsilon_D)$ be two coalgebras. A linear map $\phi:C\rightarrow D$ is called a homomorphism if $$\epsilon_D\circ\phi=\epsilon_C,\Delta_D\circ\phi=\phi\otimes\phi\circ\Delta_C.$$

  Similarly, we have the definitions of monomorphism, epimorphism and isomorphism, etc.
\end{Definition}

\begin{Remark}
By coassociativity, we will write $\Delta_M(m)=\sum m'\otimes m''$ for simplicity, and this will cause no confusion. In this notation, we have $m'\in C,m''\in M.$
\end{Remark}

\begin{Definition}\cite{DNR}\label{DCM2.2}
  Let $(C,\Delta,\epsilon)$ be a coalgebra and $(M,\Delta_M)$ a left comodule of $C$. Let $M^1$ be a sub space of $M$. If $(M^1,\Delta_M|_{M^1})$ is also a left comodule of $C$, we say $M^1$ is a sub comodule of $M$.

  $M$ is called simple if there is no nontrivial sub comodule. $M$ is called cosemisimple if $M$ is a direct sum of simple sub comodules.

  Let $(M^1,\Delta_{M^1})$ and $(M^2,\Delta_{M^2})$ be two left comodules of $C$. A linear map $\psi:M^1\rightarrow M^2$ is called a $C$-comodule homomorphism if $$(\Id\otimes \psi)\circ \Delta_{M^1}=\Delta_{M^2}\circ \psi.$$

  Similarly, we have the definitions of monomorphism, epimorphism and isomorphism, etc.
\end{Definition}

\begin{Lemma}\cite{DNR}\label{coss}
$C$ is cosemisimple if and only if every $C$-comodule is cosemisimple.
\end{Lemma}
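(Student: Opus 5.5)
The plan is to prove both directions using the fundamental theorem of comodules, local finiteness: every element $m$ of a left $C$-comodule $M$ lies in a finite-dimensional subcomodule. To see this, write $\Delta_M(m)=\sum_{i=1}^n c_i\otimes m_i$ with the $c_i$ linearly independent. Coassociativity together with the counit identity then shows that the span of the $m_i$ is a subcomodule, and it contains $m=\sum\epsilon(c_i)m_i$. Dually, every element of $C$ lies in a finite-dimensional subcoalgebra. This lets us move between comodules and modules over finite-dimensional algebras of the form $D^*$, where $D\subseteq C$ is a finite-dimensional subcoalgebra.

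\emph{($\Leftarrow$)} Suppose every $C$-comodule is cosemisimple. Apply this to $C$ itself, regarded as a left comodule via $\Delta$. Then $C=\bigoplus_j L_j$ with each $L_j$ a simple left coideal. Group the $L_j$ by isomorphism type, and let $C_\lambda$ be the sum of all simple left coideals isomorphic to a fixed simple comodule $S_\lambda$. Next show that $C_\lambda$ is a subcoalgebra. It is a left coideal by construction. To see that it is also a right coideal, use that for each $c\in C$ the map $x\mapsto\sum x'\epsilon(\ldots)$, that is, right translation by an element of $C^*$, is a left comodule endomorphism of $C$. It therefore sends isotypic components into themselves, and this gives $\Delta(C_\lambda)\subseteq C_\lambda\otimes C$. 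Finally show that $C_\lambda$ is simple as a coalgebra. Any nonzero subcoalgebra $D\subseteq C_\lambda$ is finite-dimensional locally, and its dual $D^*$ is a semisimple quotient of $C_\lambda^*$. $D$ must contain every copy of $S_\lambda$: its coefficient space is the whole of $C_\lambda$, by the density/Wedderburn theorem applied to $\End(S_\lambda)$. Hence $C=\bigoplus_\lambda C_\lambda$ is a direct sum of simple subcoalgebras.

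\emph{($\Rightarrow$)} Suppose $C=\bigoplus_\lambda C_\lambda$ with each $C_\lambda$ a simple subcoalgebra. Each simple subcoalgebra is finite-dimensional by local finiteness, so $C_\lambda^*$ is a finite-dimensional simple algebra, and in fact a matrix algebra over $\mathbb{C}$. Let $M$ be a left $C$-comodule, and put $M_\lambda=\{m: \Delta_M(m)\in C_\lambda\otimes M\}$. Projecting by the orthogonal idempotents of $C^*$ dual to the decomposition shows that $M=\bigoplus_\lambda M_\lambda$, with each $M_\lambda$ a $C_\lambda$-comodule. A $C_\lambda$-comodule is the same thing as a rational $C_\lambda^*$-module. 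Since $C_\lambda^*$ is semisimple, $M_\lambda$ is a direct sum of simple modules, and these are simple subcomodules. Hence $M$ is cosemisimple.

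The main obstacle is the $\Leftarrow$ direction, namely verifying that the isotypic components of $C$ are two-sided coideals and are simple as coalgebras. I would handle this by dualizing to finite-dimensional subcoalgebras $D$. There cosemisimplicity of $D$ as a left $D$-comodule is equivalent to semisimplicity of $D^*$, and Wedderburn's theorem gives the block decomposition, which then glues over the directed union of the finite-dimensional subcoalgebras.
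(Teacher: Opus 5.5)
The paper gives no proof of this lemma; it is quoted from \cite{DNR}. Your sketch therefore has to stand on its own, and in substance it does. The following parts are correct:
\begin{enumerate}[label=(\alph*)]
\item Local finiteness.
\item The right translations $R_f(x)=\sum x'f(x'')$, $f\in C^*$, are left comodule endomorphisms of $C$. Your formula $x\mapsto\sum x'\epsilon(\ldots)$ should read this way.
\item The inclusion $\Delta(C_\lambda)\subseteq C_\lambda\otimes C$ follows by choosing $f$ dual to linearly independent right tensor factors.
\item For $(\Rightarrow)$, the splitting $M=\bigoplus_\lambda M_\lambda$ via the orthogonal idempotents $e_\lambda=\epsilon\circ p_\lambda$, followed by Wedderburn for $C_\lambda^*\cong M_n(\mathbb{C})$.
\end{enumerate}

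The step that does not go through as written is the simplicity of $C_\lambda$. The density theorem tells you that the coefficient space $\mathrm{cf}(S_\lambda)$ is dual to $\End(S_\lambda)$. So it is an $n^2$-dimensional simple subcoalgebra inside $C_\lambda$, but density gives no reason why it should exhaust $C_\lambda$. At that stage you do not even know $C_\lambda$ is finite-dimensional, so the claim that $D^*$ is a semisimple quotient of $C_\lambda^*$ is also premature.

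The missing input is elementary:
\begin{enumerate}[label=(\roman*)]
\item For a left coideal $L$, the identity $x=\sum x'\epsilon(x'')$ shows $L\subseteq\mathrm{cf}(L)$.
\item $\mathrm{cf}(L)$ depends only on the isomorphism class of $L$.
\item If $L\subseteq D$ with $D$ a subcoalgebra, then $\Delta(L)\subseteq(D\otimes D)\cap(C\otimes L)=D\otimes L$, so $\mathrm{cf}(L)\subseteq D$.
\end{enumerate}
Now take a nonzero subcoalgebra $D\subseteq C_\lambda$. It is a semisimple left coideal, so it contains a simple $L\cong S_\lambda$. By (iii) and (ii) it then contains $\mathrm{cf}(S_\lambda)$, which by (i) and (ii) contains every copy of $S_\lambda$. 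Hence $D=C_\lambda$. No density argument is needed, and you also get that $C_\lambda=\mathrm{cf}(S_\lambda)$ is finite-dimensional.

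The route in your last paragraph also works and avoids isotypic components altogether. There you take the Wedderburn blocks of $E^*$ for finite-dimensional subcoalgebras $E$, and these blocks are compatible under inclusion along the directed union.

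For $(\Rightarrow)$ there is a shorter route. The map $\Delta_M:M\to C\otimes M$ is an injective comodule map into a direct sum of copies of $C$. So it suffices that $C$ is semisimple as a left comodule, which holds because each $M_n(\mathbb{C})^*$ splits into $n$ simple left coideals.
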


\begin{Proposition}\cite{DNR,K}\label{A-C}
Let $(A,\mu,\eta)$ be an algebra and $(C,\Delta,\epsilon)$ a coalgebra. Let $(M,\Delta_M)$ be a left comodule of $C$. Then, we have

(i) $C^*$ is an algebra with $$(\mu_{C^*}(f\otimes g),a)=\sum f(a')g(a''),\eta_{C^*}(1)=\epsilon,$$ where $f,g\in C^*, a\in C$.

(ii) $A^\circ$ is a coalgebra with $$(\Delta_{A^\circ}(f),a\otimes b)=f(ab),\epsilon_{A^\circ}(f)=f(\eta(1)),$$ where $f\in A^\circ, a,b\in A.$ If $\dim A<\infty,$ then $A^\circ=A^*.$

(iii) $M^*$ is a left $C^*$-module with $(\varrho_{M^*}(f\otimes m^*))m=\sum f(m')m^*(m'')$, where $f\in C^*,m^*\in M^*,m\in M.$
\end{Proposition}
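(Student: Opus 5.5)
We verify the three assertions of Proposition \ref{A-C} directly from the axioms in Definitions \ref{DA2.1}, \ref{DC2.1} and \ref{DCM2.1}, using the Sweedler notation $\Delta(a)=\sum a'\otimes a''$. Throughout, we dualize identities in $C\otimes C\otimes C$ by pairing them with functionals of the form $f\otimes g\otimes h$.

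For (i), bilinearity of $\mu_{C^*}$ is clear. For the unit identities, write
\[
(\mu_{C^*}(f\otimes\epsilon),a)=\sum f(a')\epsilon(a'')=f\Big(\sum a'\epsilon(a'')\Big)=f(a),
\]
by the counit identity $(\Id\otimes\epsilon)\circ\Delta=\Id$; the other side is symmetric. For associativity, both $((fg)h)(a)$ and $(f(gh))(a)$ equal $(f\otimes g\otimes h)$ applied to $(\Delta\otimes\Id)\Delta(a)$ and to $(\Id\otimes\Delta)\Delta(a)$ respectively. These agree by coassociativity.

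For (iii), we check the two module axioms. The unit axiom is
\[
(\epsilon\cdot m^*)(m)=\sum\epsilon(m')m^*(m'')=m^*\Big(\sum\epsilon(m')m''\Big)=m^*(m),
\]
by the left counit identity. For the associative axiom, $(f\cdot(g\cdot m^*))(m)=\sum f(m')\,(g\cdot m^*)(m'')$. Expanding $\Delta_M(m'')$, this becomes $(f\otimes g\otimes m^*)$ applied to $(\Id\otimes\Delta_M)\Delta_M(m)$. On the other side, $((fg)\cdot m^*)(m)$ is $(f\otimes g\otimes m^*)$ applied to $(\Delta\otimes\Id)\Delta_M(m)$. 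These agree by the coassociative identity of $M$.

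Part (ii) is the main obstacle. Here $A^\circ$ denotes the finite dual $\{f\in A^*: \ker f \text{ contains an ideal of finite codimension}\}$. The key step is to show that $\Delta_{A^\circ}(f)$, defined as a functional on $A\otimes A$, actually lies in $A^\circ\otimes A^\circ\subseteq (A\otimes A)^*$.

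We do this with the standard argument. Let $I\subseteq\ker f$ be an ideal of finite codimension. Then $f$ factors through the finite-dimensional algebra $A/I$. The map $a\otimes b\mapsto f(ab)$ therefore factors through $A/I\otimes A/I$, whose dual is $(A/I)^*\otimes(A/I)^*$. Each element of $(A/I)^*$, pulled back to $A$, vanishes on $I$ and so lies in $A^\circ$. Hence we obtain $\Delta_{A^\circ}(f)=\sum f_i\otimes g_i$ with $f_i,g_i\in A^\circ$.

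With this in hand, coassociativity follows by evaluating both sides of the coassociative identity on $a\otimes b\otimes c$: both give $f(abc)$, by associativity of $A$. This suffices because the natural map $A^\circ\otimes A^\circ\otimes A^\circ\to (A\otimes A\otimes A)^*$ is injective. The counit identities reduce to $f(a\eta(1))=f(a)=f(\eta(1)a)$.

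Finally, if $\dim A<\infty$, the zero ideal has finite codimension, so every $f\in A^*$ lies in $A^\circ$ and $A^\circ=A^*$. In this case $(A\otimes A)^*=A^*\otimes A^*$, so the factoring step is immediate.
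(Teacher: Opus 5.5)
Your proposal is correct. The paper gives no argument of its own for this proposition (it only cites the standard references), and your direct verification from the axioms is the standard textbook proof, including the step for (ii) where you factor $a\otimes b\mapsto f(ab)$ through the finite-dimensional algebra $A/I$ to get $\Delta_{A^\circ}(f)\in A^\circ\otimes A^\circ$. One small point to make explicit: $\Delta_{A^\circ}(f)$ is a well-defined element of $A^\circ\otimes A^\circ$ because the natural map $A^\circ\otimes A^\circ\to(A\otimes A)^*$ is injective, which is the same fact you already invoke for the triple tensor product.
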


\subsection{Vertex operator algebras and coalgebras}
In this subsection, we recall several concepts and results about vertex operator algebras and coalgebras.
 \begin{Definition}\cite{LL}
  Let $\mathcal {V}=\oplus_{s\in \mathbb{Z}}\mathcal {V}_s$ be a $\mathbb{Z}$-graded vector space with $\mathcal {V}_s=0$, $0\gg s$, $\dim \mathcal {V}_s<\infty,\forall s\in \mathbb{Z}$, and $\textbf{1}\in \mathcal {V}_0,\omega\in \mathcal {V}_2,Y(\cdot,z):\mathcal {V}\otimes \mathcal {V}\rightarrow \End(\mathcal {V})[[z,z^{-1}]],u\otimes v\mapsto Y(u,z)v=\sum_{t\in\mathbb{Z}}u_tvz^{-t-1}$, where $u_t\in \End(\mathcal {V})$. Then, $(\mathcal {V},Y,\textbf{1},\omega)$ is called a vertex operator algebra if the following hold:

  (i) For $ u,v\in \mathcal {V},$ $u_tv=0$, if $t\gg0$.

  (ii) $Y(\textbf{1},z)v=v,\lim_{z\rightarrow0}Y(v,z)\textbf{1}=v$, for $ v\in \mathcal {V}$.

  (iii) Write $Y(\omega,z)=\sum_{t\in\mathbb{Z}}\omega_tz^{-t-1}=\sum_{t\in\mathbb{Z}}L(t)z^{-t-2}$, then
  \begin{eqnarray*}
  &&\mathcal {V}_s=\{v\in \mathcal {V}|L(0)v=sv\},~Y(L(-1)v,z)=\frac{d}{dz}Y(v,z),\\
  &&[L(p),L(q)]=(p-q)L(p+q)+\delta_{p+q,0}\frac{p^3-p}{12}d,
  \end{eqnarray*}
  where $d\in\mathbb{C}$ is called central charge of $\mathcal {V}$. For $v\in \mathcal {V}_s$, $v$ is said to be homogeneous and the weight $\wt v$ of $v$ is defined to be $s$.

  (iv) For $ u,v\in \mathcal {V}$, we have
  \begin{eqnarray*}
  & &z_0^{-1}\delta(\frac{z_1-z_2}{z_0})Y(u,z_1)Y(v,z_2)-z_0^{-1}\delta(\frac{-z_2+z_1}{z_0})Y(v,z_2)Y(u,z_1)\\
  & &\ \ \ \ \ =z_1^{-1}\delta(\frac{z_2+z_0}{z_1})Y(Y(u,z_0)v,z_2).
  \end{eqnarray*}
  \end{Definition}

\begin{Definition}\cite{LL}
Let $(\mathcal {V},Y(\cdot,z),\textbf{1},\omega)$ be a vertex operator algebra. A bijection $g\in\End \mathcal {V}$ is called an automorphism if $g(\textbf{1})=\textbf{1},g(\omega)=\omega,g(Y(u,z)v)=Y(g(u),z)g(v)$ for all $u.v\in\mathcal {V}$. Let $\Aut \mathcal {V}$ be the set of all automorphisms of $V$.

$g\in\Aut\mathcal {V}$ is called a finite order automorphism if there is a positive integer $r\in\mathbb{N}$ such that $g^r=\Id,$ the smallest positive integer is called the order of $g$, denoted by $o(g)$.

A linear map $\phi:\mathcal {V}\rightarrow \mathcal {V}$ is called a derivation if $$\phi(\textbf{1})=\phi(\omega)=0,\phi(Y(u,z)v)=Y(\phi(u),z)v+Y(u,z)\phi(v),$$ for any $u,v\in\mathcal {V}$.
\end{Definition}

\begin{Definition}\cite{DLM2}
Let $(\mathcal {V},Y(\cdot,z),\textbf{1},\omega)$ be a vertex operator algebra, $g\in\Aut\mathcal {V}$ an automorphism of order $T$. Set $\mathcal {V}^{(r)}=\{v\in\mathcal {V}|g(v)=e^{2\pi i\frac{r}{T}}\}$. If $\mathcal {N}$ is a vector space equipped
with a linear map
\begin{align*}
Y_{\mathcal {N}}:\mathcal {V}&\to (\End \mathcal {N})[[z, z^{-1}]],\\
v&\mapsto Y_{N}(v,z)=\sum_{t\in\Q}v_tz^{-t-1},\,v_t\in \End \mathcal {N},
\end{align*}
satisfying the following conditions:

(i) For any $u\in \mathcal {V}^{(r)},\ n\in \mathcal {N}$,
\begin{align*}
&\ \ \ \ \ \ \ \ \ \ \ \ \ \ \ \ \ Y_\mathcal {N}(u,z)=\sum_{t\in\Z+\frac{r}{T}}v_tz^{-t-1};\\
&\ \ \ \ \ \ \ \ \ \ \ \ \ \ \ \ \ u_tn=0 \text{ for } t\gg0;\\
&\ \ \ \ \ \ \ \ \ \ \ \ \ \ \ \ \ \  Y_\mathcal {N}(\1, z)=\Id_\mathcal {N}.\\
\end{align*}

(ii) For any $u\in V^{(r)}, v\in V$,
\begin{eqnarray}
&&z_0^{-1}\delta(\frac{z_1-z_2}{z_0})Y_\mathcal {N}(u,z_1)Y_\mathcal {N}(v,z_2)-z_0^{-1}\delta(\frac{-z_2+z_1}{z_0})Y_\mathcal {N}(v,z_2)Y_\mathcal {N}(u,z_1)\nonumber\\
&&\ \ \ \ \ =z_2^{-1}\delta(\frac{z_1-z_0}{z_2})(\frac{z_1-z_0}{z_2})^{\frac{-r}{T}}Y_\mathcal {N}(Y(u,z_0)v,z_2).\label{ID2.3}
\end{eqnarray}
Then, $(\mathcal {N},Y_\mathcal {N}(\cdot,z))$ is called a weak $g$-twisted $\mathcal {V}$-module.

If $g=1$, this reduces to the definition of weak $\mathcal {V}$-module.

A weak $g$-twisted $\mathcal {V}$-module $\mathcal {N}$ is called an admissible $g$-twisted $\mathcal {V}$-module if $\mathcal {N}$ has a $\frac{1}{T}\mathbb{N}$-gradation $\mathcal {N}=\bigoplus_{t\in\frac{1}{T}\mathbb{N}}N_t$ such that
\begin{align*}\label{AD1}
a_sN_t\subset N_{\wt{a}+t-s-1}
\end{align*}
for any homogeneous $a\in \mathcal {V}$ and $s\in\frac{1}{T}\mathbb{Z},\,t\in\frac{1}{T}\mathbb{N}$.

An admissible $g$-twisted $\mathcal {V}$-module $\mathcal {N}$ is said to be irreducible if $\mathcal {N}$ has no non-trivial admissible $g$-twisted $\mathcal {V}$-submodule. When an admissible $g$-twisted $\mathcal {V}$-module $\mathcal {N}$ is a direct sum of irreducible admissible $g$-twisted submodules, $\mathcal {N}$ is called completely reducible.

$\mathcal {V}$ is called $g$-rational if all admissible $g$-twisted $\mathcal {V}$-modules are completely reducible.
\end{Definition}

\begin{Definition}\cite{H,H2}\label{Def2.17}
  A vertex coalgebra is a triple $(V,\Yup (z),c)$, where $V$ is a vector space, $c: V\rightarrow\mathbb{C}$ is a linear map, $\Yup (z):V\rightarrow V\otimes V[[z,z^{-1}]]$, $v\mapsto \sum_{k\in\mathbb{Z}}\Delta_k(v)z^{-k-1}$ is a linear map, which satisfy following conditions:

  (i) For any $v\in V$, we have $\Delta_k(v)=0$ for $k<<0.$

  (ii) For any $v\in V$, we have
    \begin{eqnarray}\label{LU3.1}
      (c\otimes \Id)\circ\Yup (z)v=v,
    \end{eqnarray} and
    \begin{eqnarray}\label{CI3.2}
      (\Id\otimes c)\circ\Yup (z)v\in V[[z]], \label{CI3.2}\\
      \lim_{z\rightarrow 0}(\Id\otimes c)\circ\Yup (z)v=v.\label{CI3.3}
    \end{eqnarray}

  (iii) The following identity
    \begin{eqnarray}\label{JI3.4}
      &&z_0^{-1}\delta(\frac{z_1-z_2}{z_0})(\Id\otimes \Yup (z_2))\circ \Yup (z_1)-z_0^{-1}\delta(\frac{z_2-z_1}{-z_0})(\tau\otimes\Id)\circ(\Id\otimes \Yup (z_1))\circ \Yup (z_2)\nonumber\\
      &&\ \ \ \ \ =z_1^{-1}\delta(\frac{z_2+z_0}{z_1})(\Yup (z_0)\otimes \Id)\circ \Yup (z_2)
    \end{eqnarray}
  holds on $V$. This identity is called Jacobi identity.

  $A$ vertex coalgebra $(V,\Yup(z),c)$ is called graded if there is a $\mathbb{Z}$-gradation $V=\oplus_{s\in\mathbb{Z}}V_s$ on $V$ such that each homogeneous space is finite dimensional and $V_s=0$ for $s<<0.$ View $V\otimes V$ as a $\mathbb{Z}$-graded vector space with natural gradation, i.e., $$(V\otimes V)_t=\oplus_{s\in\mathbb{Z}}V_{t-s}\otimes V_{s}.$$ $\Delta_k$ is a homogeneous linear map of degree $k+1$, i.e., for any $v\in V_s$, we have $$\Delta_k(v)\in (V\otimes V)_{s+k+1}.$$

  A $\mathbb{Z}$-graded vertex coalgebra is called a $\mathbb{Z}$-graded vertex operator coalgebra, if there is another linear map $\rho:V\rightarrow \mathbb{C}$, and write $(\rho\otimes \Id)\circ\Yup (z)=\sum_{k\in\mathbb{Z}}L(k)z^{k-2}$, then $L(k)\in\End(V)$ and the following Virasoro identity
    \begin{eqnarray}\label{VI3.5}
     [L(k),L(j)]=(k-j)L(k+j)+\frac{k^3-k}{12}\delta_{k+j,0}d
    \end{eqnarray}
  holds on $V$. $d$ is called the rank of $V$. Furthermore, $V_s$ is an eigenspace of $L(0)$ with eigenvalue $s$ and the following identity
    \begin{eqnarray}\label{L(1D)3.6}
     (L(1)\otimes\Id)\circ \Yup (z)=\frac{d}{dz}\Yup (z)
    \end{eqnarray}
  holds on $V$. For any $v\in V_s$, we say $v$ is homogeneous of weight $s$, denoted by $s=\wt v.$

  If there is no confusion, we may say $V$ is a graded vertex operator coalgebra for simplicity.
\end{Definition}

\begin{Definition}
Let $(V,\Yup(z),c,\rho)$ be a graded vertex operator algebra and $U\subseteq V$ a graded subspace of $V$. $U$ is called a coideal of $V$ if for any $u\in U$, $$c(u)=\rho(u)=0,\Delta_k(u)\in V\otimes U+U\otimes V,k\in\mathbb{Z}.$$
\end{Definition}

\begin{Proposition}\label{Quo}
Let $(V,\Yup(z),c,\rho)$ be a graded vertex operator algebra and $U\subseteq V$ a coideal of $V$. Then $V/U$ is a graded vertex operator algebra.
\end{Proposition}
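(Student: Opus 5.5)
The plan is to show that all the structure maps of $V$ descend to $V/U$ through the canonical projection $\pi:V\rightarrow V/U$. (The statement says ``graded vertex operator algebra'', but it clearly means graded vertex operator coalgebra, and I will treat it that way.) Since $U$ is a graded subspace, $V/U=\oplus_{s\in\mathbb{Z}}V_s/U_s$ inherits a $\mathbb{Z}$-gradation. Each $V_s/U_s$ is finite dimensional, and $V_s/U_s=0$ for $s\ll0$.

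\textbf{Step 1: define the maps on $V/U$.} Set
\[
\bar\Delta_k(\pi(v))=(\pi\otimes\pi)\Delta_k(v),\qquad \bar c(\pi(v))=c(v),\qquad \bar\rho(\pi(v))=\rho(v).
\]
These are well defined exactly because of the coideal conditions. For $u\in U$ we have $c(u)=\rho(u)=0$. We also have $\Delta_k(u)\in V\otimes U+U\otimes V$, and $\pi\otimes\pi$ kills this subspace, since $\ker(\pi\otimes\pi)=V\otimes U+U\otimes V$. Put $\bar\Yup(z)=\sum_k\bar\Delta_kz^{-k-1}$. Then $\pi$ intertwines everything:
\[
(\pi\otimes\pi)\circ\Yup(z)=\bar\Yup(z)\circ\pi,\qquad \bar c\circ\pi=c,\qquad \bar\rho\circ\pi=\rho.
\]
The degree of $\bar\Delta_k$ is still $k+1$, because $\pi$ preserves the grading.

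\textbf{Step 2: transfer the axioms.} Each axiom for $\bar V=V/U$ is an identity of maps out of $\bar V$, evaluated on $\pi(v)$. Applying the intertwining relations rewrites the $\bar V$-side as the image of the corresponding $V$-side under $\pi$, $\pi\otimes\pi$, or $\pi\otimes\pi\otimes\pi$.
\begin{itemize}
\item Truncation: $\bar\Delta_k(\pi v)=0$ for $k\ll0$, since $\Delta_k(v)=0$ there.
\item Left counit: $(\bar c\otimes\Id)\bar\Yup(z)\pi(v)=(c\otimes\pi)\Yup(z)v=\pi(v)$.
\item Right counit and the limit condition: $(\Id\otimes\bar c)\bar\Yup(z)\pi(v)=\pi\big((\Id\otimes c)\Yup(z)v\big)$, which lies in $\bar V[[z]]$ and has the correct limit.
\item Jacobi identity: the three terms satisfy
\[
(\Id\otimes\bar\Yup(z_2))\bar\Yup(z_1)\pi=\pi^{\otimes3}(\Id\otimes\Yup(z_2))\Yup(z_1),
\]
and the same holds for the other two composites, because $\tau$ commutes with $\pi\otimes\pi$. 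So the Jacobi identity on $\bar V$ is $\pi^{\otimes3}$ applied to the Jacobi identity on $V$.
\item Virasoro operators: $\bar L(k)$, defined from $\bar\rho$, satisfies $\bar L(k)\pi=\pi L(k)$. So $\bar L(k)$ is just the operator induced by $L(k)$ on the quotient. In particular $L(k)U\subseteq U$, since $\rho(U)=0$ and $\Delta_j(U)\subseteq V\otimes U+U\otimes V$ give $(\rho\otimes\Id)\Delta_j(U)\subseteq U$.
\item The remaining conditions: the Virasoro relations, the fact that $V_s/U_s$ is the $s$-eigenspace of $\bar L(0)$, and $(\bar L(1)\otimes\Id)\bar\Yup(z)=\frac{d}{dz}\bar\Yup(z)$ all follow by applying $\pi$ or $\pi\otimes\pi$ to the corresponding identities on $V$.
\end{itemize}

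\textbf{Main obstacle.} The only real point is well-definedness, in particular the identification $\ker(\pi\otimes\pi)=V\otimes U+U\otimes V$. This needs a short linear-algebra argument: choose a complement $W$ with $V=U\oplus W$, so that $V\otimes V=(U\otimes V+V\otimes U)\oplus(W\otimes W)$. One also has to check that the infinite sums in formal variables pose no issue. This is fine because the maps are applied coefficientwise, each coefficient lies in a finite-dimensional graded piece, and $\pi^{\otimes n}$ is applied coefficientwise. Once that is in place, everything else is a mechanical transport of identities along a surjection.
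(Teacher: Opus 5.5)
Your proposal is correct and takes essentially the same route as the paper. Like the paper, you define the vertex cooperator, $c$ and $\rho$ on $V/U$ through the projection $\pi$, get well-definedness from the coideal conditions, and transport every axiom along the surjection $\pi$ (the paper simply calls these remaining checks obvious); your extra details such as $L(k)U\subseteq U$ are accurate, though well-definedness only needs the trivial inclusion $V\otimes U+U\otimes V\subseteq\ker(\pi\otimes\pi)$, not the full equality you single out as the main obstacle.
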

\proof The proof of this Proposition is routine. First, $$\Yup_{V/U}(z)\circ\pi=\pi\otimes\pi\circ\Yup(z),$$ where $\pi:V\rightarrow V/U$ is the canonical projection. Since $U$ is a coideal, we can see that $\Yup_{V/U}(z)$ is well-defined, and there are unique linear maps $c_{V/U},\rho_{V/U}$ such that $$c_{V/U}\circ\pi=c,\rho_{V/U}\circ\pi=\rho.$$ Now the other axioms are obvious.

Furthermore, $\pi$ is a homomorphism of graded vertex operator coalgebras.                      $\hfill\Box$

\begin{Proposition}\cite{H2}
In the definition of vertex coalgebras, Jacobi identity (\ref{JI3.4}) is equivalent to following two conditions:

(i) (Weak cocommutativity) For $v\in V$, there exist $q\in \mathbb{N}$ such that $$(z_1-z_2)^q(\Id\otimes \Yup (z_2))\circ \Yup (z_1)v=(z_1-z_2)^q(\tau\otimes\Id)\circ(\Id\otimes \Yup (z_1))\circ \Yup (z_2)v.$$

(ii) (Weak coassociativity) For $v\in V$, there exist $q\in \mathbb{N}$ such that $$(z_0+z_2)^q(\Yup (z_0)\otimes \Id)\circ \Yup (z_2)v=(z_0+z_2)^q(\Id\otimes \Yup (z_2))\circ \Yup (z_0+z_2)v.$$
\end{Proposition}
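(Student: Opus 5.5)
The plan is to prove the equivalence in two directions, and to work throughout at the level of coefficients of formal series, since both conditions and the Jacobi identity (\ref{JI3.4}) are identities in $V\otimes V\otimes V[[z_0^{\pm1},z_1^{\pm1},z_2^{\pm1}]]$ applied to a fixed $v\in V$. Write
\begin{align*}
A(z_1,z_2)&=(\Id\otimes \Yup (z_2))\circ \Yup (z_1)v,\\
B(z_1,z_2)&=(\tau\otimes\Id)\circ(\Id\otimes \Yup (z_1))\circ \Yup (z_2)v,\\
C(z_0,z_2)&=(\Yup (z_0)\otimes \Id)\circ \Yup (z_2)v.
\end{align*}
The Jacobi identity then reads
\begin{align*}
z_0^{-1}\delta(\tfrac{z_1-z_2}{z_0})A-z_0^{-1}\delta(\tfrac{z_2-z_1}{-z_0})B=z_1^{-1}\delta(\tfrac{z_2+z_0}{z_1})C.
\end{align*}
The key structural input is the truncation property (i) of Definition \ref{Def2.17}: $\Delta_k(v)=0$ for $k\ll0$. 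Hence $\Yup(z)v$ involves only bounded-above powers of $z$, i.e.\ it is a finite sum in $z^{-1}$-direction, or more precisely lies in $V\otimes V((z))$ after an appropriate change of orientation. Consequently $A$ is a Laurent series with powers of $z_1$ bounded below and, for each $z_1$-coefficient, powers of $z_2$ bounded below. The analogous statements hold for $B$ (with the roles of $z_1,z_2$ swapped) and for $C$. This is exactly the dual of the usual truncation condition $u_nv=0$ for $n\gg0$, and the proof will mirror the standard vertex algebra argument (as in \cite{LL}), with these truncations replacing the module truncations.

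\emph{Jacobi $\Rightarrow$ (i),(ii).} Take $\Res_{z_0}$ of $z_0^q$ times the Jacobi identity. The right side vanishes once $q$ is large enough that $z_0^qC(z_0,z_2)$ has no negative powers of $z_0$; such a $q$ exists by truncation of the inner $\Yup(z_0)$ applied to the finitely many relevant tensor factors. The left side becomes $(z_1-z_2)^qA-(z_1-z_2)^qB$, which gives weak cocommutativity. Similarly, for (ii) take $\Res_{z_1}$ of $z_1^{q}$ times the identity. The $B$-term drops for large $q$, since $z_1^qB$ is a power series in $z_1$. What remains is
\begin{align*}
(z_0+z_2)^q C(z_0,z_2)=\Res_{z_1}z_0^{-1}\delta(\tfrac{z_1-z_2}{z_0})z_1^qA(z_1,z_2)=(z_0+z_2)^qA(z_0+z_2,z_2),
\end{align*}
which is weak coassociativity.

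\emph{(i),(ii) $\Rightarrow$ Jacobi.} This is the standard argument. By (i), $(z_1-z_2)^qA=(z_1-z_2)^qB=:F(z_1,z_2)$, and the truncation properties force $F$ to lie in $V^{\otimes3}((z_1,z_2))$, hmm, more precisely in the space where both expansions make sense, so $F$ is a "rational" object. Then $A=\iota_{z_1,z_2}(z_1-z_2)^{-q}F$ and $B=\iota_{z_2,z_1}(z_1-z_2)^{-q}F$. The left side of the Jacobi identity equals $z_0^{-q}$ times the delta-function combination applied to $F$, which by the standard delta-function identity equals
\begin{align*}
z_1^{-1}\delta(\tfrac{z_2+z_0}{z_1})z_0^{-q}F(z_2+z_0,z_2).
\end{align*}
Using (ii), with $q$ enlarged if needed, we identify $z_0^{-q}F(z_0+z_2,z_2)$ with $C(z_0,z_2)$ after cancelling the factor $(z_0+z_2)^{q}$. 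This cancellation is legitimate because both sides lie in a space where the $z_2$-powers are suitably truncated.

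The main obstacle I expect is bookkeeping of which truncations hold in the coalgebra setting. In particular I need to verify that $F$ lies in a space where substitutions like $F(z_2+z_0,z_2)$ are well-defined, and that multiplication by $(z_0+z_2)^{q}$ is injective on the relevant space. I would handle this by fixing $v$ and noting that $\Yup(z_2)v$ is a finite sum in each direction after truncation, reducing to finitely many tensors. After that reduction, the identities become those of \cite{LL}, Proposition 3.4.3 style, coefficientwise.
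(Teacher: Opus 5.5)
\textbf{There is a genuine gap.} It lies in the truncation bookkeeping you postponed: you have imported the vertex-algebra truncation, and it runs the other way for coalgebras.

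The axiom $\Delta_k(v)=0$ for $k\ll 0$ kills the \emph{large positive} powers $z^{-k-1}$. So $\Yup(z)v\in (V\otimes V)((z^{-1}))$ has powers of $z$ bounded \emph{above}, and in general infinitely many $\Delta_k(v)$ are nonzero; there are not ``finitely many relevant tensor factors''. Consequently $A\in V^{\otimes 3}((z_2^{-1}))((z_1^{-1}))$ and $B\in V^{\otimes 3}((z_1^{-1}))((z_2^{-1}))$. For each $k$, the series $(\Yup(z_0)\otimes\Id)\Delta_k(v)$ has $z_0$-powers bounded above but not below.

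Both vanishing claims in your forward direction therefore fail. No $q$ makes $z_0^{q}C(z_0,z_2)$ a power series in $z_0$, and no $q$ makes $z_1^{q}B$ a power series in $z_1$. The input you are dualizing is $u_nw=0$ for $n\geq q$, uniformly in the vector $w$ acted on. Its dual would be $(\Delta_j\otimes\Id)\Delta_k(v)=0$ for $j\geq q$ uniformly in $k$, and that is not an axiom.

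Your converse inherits the problem. $F$ lies in $V^{\otimes 3}[[z_1^{-1},z_2^{-1}]][z_1,z_2]$, so it has infinitely many negative powers of $z_1$. The substitution $F(z_0+z_2,z_2)$ then depends on the direction of expansion. The cancellation of $(z_0+z_2)^q$ is exactly the point that needs proof, and ``suitably truncated'' does not supply it.

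\textbf{This cannot be repaired within the unpaired formulation.} Read literally ($q$ depending only on $v$, equality in $V^{\otimes 3}[[z_1^{\pm1},z_2^{\pm1}]]$), condition (i) fails for a natural graded vertex operator coalgebra. Take $V=M(1)'$, the graded dual of the rank-one Heisenberg vertex operator algebra; it is a graded vertex operator coalgebra by Proposition \ref{VC-VA}. Take $v=\mathbf{1}^*$ and pair (i) with $h\otimes h(-n)\mathbf{1}\otimes\mathbf{1}$. This gives $n\bigl(\iota_{z_1,z_2}-\iota_{z_2,z_1}\bigr)(z_1-z_2)^{q-n-1}$, the difference of the expansions in nonnegative powers of $z_2$ and of $z_1$. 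It is nonzero as soon as $n\geq q$.

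\textbf{The missing idea} appears in the paper's proof of the analogous Proposition \ref{WC-WA}; for the present statement the paper only cites \cite{H2}. You must pair with homogeneous elements of $V'$ and use the grading. Since $\Delta_j$ has degree $j+1$ and $V_s=0$ for $s<s_0$, one has $(u^*\otimes v^*,\Yup(z_0)w)\in z_0^{-K}\mathbb{C}[[z_0]]$ with $K=\wt u^*+\wt v^*-s_0$, uniformly in $w$.
\begin{itemize}
\item For (i): $\Res_{z_0}z_0^{K}$ of the paired Jacobi identity kills the right side, giving (i) with $q=K$ depending on $u^*,v^*$.
\item For (ii): pairing the first and third slots in the same way kills the $B$-term under $\Res_{z_1}z_1^{K'}$, giving (ii).
\end{itemize}
After such pairings, $(z_1-z_2)^qA$ becomes a Laurent polynomial. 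It gets an upper bound in $z_1$ and a lower bound in $z_2$ from $A$, and the opposite bounds from $B$. Only then does the standard vertex-algebra argument for the converse go through.
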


\begin{Proposition}\cite{H,H1,H2}\label{VC-VA}
Let $(V,\Yup (z),c,\rho)$ be a graded vertex operator coalgebra and $(\mathcal {V},Y,\textbf{1},\omega)$ a vertex operator algebra. Suppose $V_0\nsubseteq\ker c,V_2\nsubseteq\ker\rho.$ Then we have

(i) $(\mathcal {V}'=\oplus_{s\in\mathbb{Z}}\mathcal {V}_s^*,\Yup_{\mathcal {V}'}(z),c_{\mathcal {V}'},\rho_{\mathcal {V}'})$ is a graded vertex operator coalgebra with
\begin{eqnarray*}
&&c_{\mathcal {V}'}=\textbf{1}^*,\rho_{\mathcal {V}'}=\omega^*,\\
&&(\Yup_{\mathcal {V}'}(z)v^*,u\otimes v)=(v^*,Y(u,z)v),
\end{eqnarray*}
where $v^*\in \mathcal {V}',u,v\in\mathcal {V}$, $\textbf{1}^*,\omega^*$ are the dual elements of $\textbf{1},\omega.$

(ii) $(V'=\oplus_{s\in\mathbb{Z}}V_s',Y_{V'}(\cdot,z),\textbf{1}_{V'},\omega_{V'})$ is a vertex operator algebra with
    \begin{eqnarray*}
     &&\textbf{1}_{V'}=c|_{V_0},\omega_{V'}=\rho|_{V_2},\\
     &&(Y(u^*,z)v^*,v)=(u^*\otimes v^*,\Yup(z)v),
    \end{eqnarray*}
where $u^*,v^*\in V',v\in V.$
\end{Proposition}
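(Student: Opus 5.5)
The statement is Proposition \ref{VC-VA}, which has two halves: dualizing a vertex operator algebra gives a graded vertex operator coalgebra, and dualizing a graded vertex operator coalgebra gives a vertex operator algebra. In both directions I would pass to the restricted dual and transport every axiom through the natural pairing. This works because every structure map is homogeneous and each graded piece is finite dimensional. Concretely, for fixed degrees the coefficient $\Delta_k$ restricted to $V_s$ lands in the finite-dimensional space $(V\otimes V)_{s+k+1}=\bigoplus_t V_{s+k+1-t}\otimes V_t$. Only finitely many $t$ contribute there, since $V_t=0$ for $t\ll 0$ and $V_{s+k+1-t}=0$ for $t\gg 0$.

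For (ii), I first have to check that the definition of $Y_{V'}$ makes sense. For $u^*\in V_p'$, $v^*\in V_q'$, the coefficient $u^*_t v^*$ is the functional $v\mapsto (u^*\otimes v^*,\Delta_t(v))$. It is nonzero only on $V_{p+q-t-1}$, so it lies in $V'$. Moreover $u^*_tv^*=0$ for $t\gg0$, because $V_{p+q-t-1}=0$ for large $t$. This gives the truncation axiom. Next I pair the VOA axioms with $V$:
\begin{itemize}
\item The vacuum axiom $Y(\mathbf{1},z)=\mathrm{Id}$ is dual to the left counit identity (\ref{LU3.1}).
\item The creation property $\lim_{z\to0}Y(v^*,z)\mathbf 1=v^*$ is dual to (\ref{CI3.2})--(\ref{CI3.3}).
\item The modes of $Y(\omega_{V'},z)$ are the transposes of the $L(k)$ defined from $\rho$. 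Transposing reverses commutators, which sends $L(k)$ to an operator of degree $-k$. After reindexing (the transpose of $L(k)$ plays the role of $L(-k)$ on $V'$, with the $z^{k-2}$ versus $z^{-k-2}$ conventions matching), the Virasoro relation (\ref{VI3.5}) becomes the Virasoro relation on $V'$ with the same central charge.
\item The $L(-1)$-derivative property is dual to (\ref{L(1D)3.6}).
\item The $L(0)$-grading is immediate.
\item The Jacobi identity for $V'$, evaluated on $u^*\otimes v^*\otimes w^*$ against $v\in V$, is exactly (\ref{JI3.4}) paired with $u^*\otimes v^*\otimes w^*$. The flip $\tau$ accounts for the reordering in the second term, and the third term matches $Y(Y(u^*,z_0)v^*,z_2)$.
\end{itemize}
The hypotheses $V_0\not\subseteq\ker c$ and $V_2\not\subseteq \ker\rho$ ensure that $\mathbf 1_{V'}$ and $\omega_{V'}$ are nonzero. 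I would also check that they satisfy the defining properties, namely that the modes of $Y(\omega_{V'},z)$ are the transposes of the $L(k)$; this follows from the definition of $L(k)$ via $\rho\otimes\Id$.

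Part (i) is the mirror argument. Here $\Yup_{\mathcal V'}(z)v^*$ is defined through $(v^*,Y(u,z)v)$. The main task is to show that the result actually lies in $(\mathcal V'\otimes\mathcal V')[[z,z^{-1}]]$ and not merely in the full dual of $\mathcal V\otimes\mathcal V$. This follows from the grading: for $v^*\in\mathcal V_s^*$, the $k$-th coefficient is supported on $\bigoplus_t \mathcal V_{s+k+1-t}\otimes\mathcal V_t$, which is a finite sum of finite-dimensional pieces, so it is a finite tensor. The lower truncation $\Delta_k(v^*)=0$ for $k\ll0$ holds because $\mathcal V_{s+k+1-t}\otimes\mathcal V_t$ vanishes for all $t$ once $k$ is very negative, since $\mathcal V$ is bounded below. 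The remaining axioms are transposes of the VOA axioms, exactly as in (ii).

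I expect the main obstacle to be the careful bookkeeping in the Jacobi identity. The formal delta-function expansions must be matched term by term, and one needs the finiteness just established to justify interchanging pairing with the infinite sums. The sign and indexing conventions in the Virasoro step, the transpose of $L(k)$ versus $L(-k)$ together with the power $z^{k-2}$, also need care so that the central term comes out with the correct sign.
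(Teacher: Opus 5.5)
The paper gives no proof of this proposition; it is quoted from \cite{H,H1,H2}. Your transposition argument is the standard proof and is correct in outline:
\begin{itemize}
\item the finite dimensionality of $(V\otimes V)_{s+k+1}$ makes $\Yup_{\mathcal V'}$ and $Y_{V'}$ well defined;
\item the three terms of (\ref{JI3.4}), paired with $u^*\otimes v^*\otimes w^*$, give $Y(u^*,z_1)Y(v^*,z_2)w^*$, $Y(v^*,z_2)Y(u^*,z_1)w^*$ and $Y(Y(u^*,z_0)v^*,z_2)w^*$;
\item with $L_{V'}(n)=L(-n)^{\mathrm t}$ one gets
\[ [L_{V'}(m),L_{V'}(n)]=-[L(-m),L(-n)]^{\mathrm t}=(m-n)L_{V'}(m+n)+\tfrac{m^3-m}{12}\delta_{m+n,0}d. \]
\end{itemize}
One small slip: on $V$ the operator $L(k)=(\rho\otimes\Id)\circ\Delta_{1-k}$ lowers degree by $k$, so its transpose raises degree by $k$, as $L_{V'}(-k)$ should. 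Your identification is right, but the phrase ``degree $-k$'' is not.

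\textbf{The step you skip.} Definition \ref{Def2.17} does not require $c$ to vanish off $V_0$ or $\rho$ to vanish off $V_2$, yet the statement uses $c|_{V_0}$ and $\rho|_{V_2}$. Two of your claims tacitly assume $c=c|_{V_0}$ and $\rho=\rho|_{V_2}$: that $Y(\mathbf 1_{V'},z)=\Id$ is dual to (\ref{LU3.1}), and that the modes of $Y(\omega_{V'},z)$ are the transposes of the $L(k)$. Both equalities hold, but they must be derived.

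For $c$, write $c=\sum_n c_n$ with $c_n$ supported on $V_n$. For $v\in V_s$ one has $(c_n\otimes\Id)\Delta_k(v)\in V_{s+k+1-n}$. So (\ref{LU3.1}) splits by degree and gives $(c_n\otimes\Id)\circ\Yup(z)=0$ for $n\neq0$. By (\ref{CI3.2})--(\ref{CI3.3}), $c_n(v)$ is the constant term of $c_n\bigl((\Id\otimes c)\circ\Yup(z)v\bigr)$. That series equals $c\bigl((c_n\otimes\Id)\circ\Yup(z)v\bigr)=0$, so $c_n=0$ for $n\neq0$.

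For $\rho$, the summand $(\rho_n\otimes\Id)\circ\Delta_{1-k}$ of $L(k)$ is homogeneous of degree $2-k-n$. Since $L(0)$ is the grading operator, $[L(0),L(k)]=-kL(k)$ forces $(2-n)(\rho_n\otimes\Id)\circ\Delta_{1-k}=0$. Hence $(\rho_n\otimes\Id)\circ\Yup(z)=0$ for $n\neq2$, and the same cocreation trick gives $\rho_n=0$.

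With this added, your argument is complete.
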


\section{Twisted admissible $V$-comodules}
In this section, we introduce the notion of twisted admissible $V$-comodules for a graded vertex operator coalgebra $V$, and prove the restricted dual is a twisted admissible $V'$-module and vice verse.

\begin{Definition}
Let $(V,\Yup_{V} (z),c_{V},\rho_{V})$ and $(V^1,\Yup_{V^1} (z),c_{V^1},\rho_{V^1})$ be two graded vertex operator coalgebras. A linear map $\phi:V\rightarrow V^1$ is called a homomorphism if $$c_{V^1}=c_{V}\circ\phi,\rho_{V^1}=\rho_{V}\circ\phi,\Yup_{V^1}(z)\circ\phi=\phi\otimes\phi\circ\Yup_{V}(z).$$  Similarly, we have the definitions of monomorphism, epimorphism and isomorphism, etc.

If $V=V^1$, $\phi$ is called an endomorphism. Furthermore, if $\phi$ is isomorphic, it is called an automorphism. Let $\Aut V$ denote the set of all automorphisms of $V.$

$g\in\Aut V$ is called a finite order automorphism if there is a positive integer $r\in\mathbb{N}$ such that $g^r=\Id,$ the smallest positive integer is called the order of $g$, denoted by $o(g)$.

A linear map $\phi:V\rightarrow V$ is called a derivation if $$c_{V}\circ\phi=\rho_{V}\circ\phi=0,\Yup_{V^1}(z)\circ\phi=\phi\otimes\Id\circ\Yup_{V}(z)+\Id\otimes\phi\circ\Yup_{V}(z).$$
\end{Definition}

\begin{Proposition}\label{Prop3.2}
Let $(V,\Yup(z),c,\rho)$ be a graded vertex operator coalgebra.

(i) $g$ is an automorphism of $V$ if and only if $g$ is an automorphism of $V'$.

(ii) $\phi$ is a derivation of $V$ if and only if $\phi$ is a derivation of $V'$.
\end{Proposition}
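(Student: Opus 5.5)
Here "$g$ is an automorphism of $V'$" has to be read correctly. A graded automorphism $g$ of $V$ preserves each finite-dimensional $V_s$ (it commutes with $L(0)$, as shown below), so it acts on $V'=\oplus_s V_s^*$ by restricting the transpose: $(g u^*, v)=(u^*, g v)$. Since transpose reverses composition, this action is the one making the identities below clean; with the alternative convention one uses $(g^{-1})^*$, and the argument is identical. Derivations are handled the same way. The whole proof is a bookkeeping argument with the pairing in Proposition \ref{VC-VA}(ii), $(Y(u^*,z)v^*,v)=(u^*\otimes v^*,\Yup(z)v)$, together with $\textbf{1}_{V'}=c|_{V_0}$ and $\omega_{V'}=\rho|_{V_2}$.

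\emph{Step 1: gradedness.} First show that an automorphism $g$ of $V$ commutes with $L(0)$. From $(\rho\otimes\Id)\circ\Yup(z)=\sum_k L(k)z^{k-2}$, together with $\rho\circ g=\rho$ and $\Yup(z)\circ g=(g\otimes g)\circ\Yup(z)$, we get $L(k)g=gL(k)$ for every $k$. Hence $g$ preserves each $V_s$, and the transpose is defined on $V'$. Conversely, a grade-preserving $g$ on $V'$ determines $g$ on $V=\oplus_s (V_s^*)^*$, because each $V_s$ is finite dimensional.

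\emph{Step 2: identities for automorphisms.} Compute
$$(Y(gu^*,z)gv^*,v)=(gu^*\otimes gv^*,\Yup(z)v)=(u^*\otimes v^*,(g\otimes g)\Yup(z)v).$$
We also have
$$(gY(u^*,z)v^*,v)=(Y(u^*,z)v^*,gv)=(u^*\otimes v^*,\Yup(z)gv).$$
Since the pairing is nondegenerate homogeneous component by homogeneous component, $\Yup(z)\circ g=(g\otimes g)\circ\Yup(z)$ holds if and only if $g Y(u^*,z)v^*=Y(gu^*,z)gv^*$ for all $u^*,v^*$. In the same way, $c\circ g=c$ is equivalent to $g\textbf{1}_{V'}=\textbf{1}_{V'}$, because $(g\,c|_{V_0},v)=c(gv)$. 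Likewise $\rho\circ g=\rho$ is equivalent to $g\omega_{V'}=\omega_{V'}$. Bijectivity transfers directly, since each homogeneous piece is finite dimensional. This proves (i).

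\emph{Step 3: derivations.} The same computation works for a derivation $\phi$. The identity $\Yup(z)\phi=(\phi\otimes\Id+\Id\otimes\phi)\Yup(z)$ pairs with $u^*\otimes v^*$ to give
$$\phi Y(u^*,z)v^*=Y(\phi u^*,z)v^*+Y(u^*,z)\phi v^*.$$
Also $c\circ\phi=0$ and $\rho\circ\phi=0$ translate to $\phi\textbf{1}_{V'}=0$ and $\phi\omega_{V'}=0$. Gradedness follows as in Step 1: $\rho\circ\phi=0$ gives $[L(k),\phi]=0$ after applying $\rho\otimes\Id$. This proves (ii).

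\emph{Main obstacle.} The only delicate point is Step 1, namely checking that $g$ (respectively $\phi$) preserves the grading, so that the transpose actually lands in the restricted dual $V'$. The second delicate point is justifying nondegeneracy of the pairing coefficientwise in $z$, for which the finite-dimensionality of each homogeneous space is exactly what is needed.
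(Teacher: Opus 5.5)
Your proposal is correct and follows the same route as the paper: you transport $g$ (resp.\ $\phi$) to $V'$ by duality and verify the vacuum, conformal-vector and vertex-operator conditions through the pairing of Proposition~\ref{VC-VA}(ii). The differences are cosmetic. The paper lets $g$ act on $V'$ by the transpose of $g^{-1}$ and lets $\phi$ act by $-\phi^{*}$; the first of these is the convention behind the later ``$g^{-1}$-twisted $V'$-module'' statements, while you use plain transposes. You also make explicit the grade-preservation and nondegeneracy points that the paper leaves implicit or covers with ``conversely, it is similar''.
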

\proof (i) Suppose $g$ is an automorphism of $V$, then $g^{-1}$ is also an automorphism of $V$. For any $v^*\in V'$, define $$(g(v^*),v)=(v^*,g^{-1}(v)),$$ where $v\in V.$ Now by Proposition \ref{VC-VA}, we have
\begin{eqnarray*}
&&(g(c|_{V_0}),v)=(c_{V_0},g^{-1}(v))=(c_{V_0}\circ g^{-1},v)=(c|_{V_0},v),\\
&&(g(\rho|_{V_0}),v)=(\rho_{V_0},g^{-1}(v))=(\rho_{V_0}\circ g^{-1},v)=(\rho|_{V_0},v),
\end{eqnarray*}
and
\begin{eqnarray*}
&&(g(Y(v^*,z)u^*,v)=(Y(v^*,z)u^*,g^{-1}(v))=(v^*\otimes u^*,\Yup(z)g^{-1}(v))\\
&=&(v^*\otimes u^*,g^{-1}\otimes g^{-1}\circ\Yup(z)(v))=(Y(g(v^*),z) g(u^*),v),
\end{eqnarray*}
where $v^*,u^*\in V',v\in V$. Hence, $g$ is an automorphism of $V'$.

Conversely, it is similar to prove if $g$ is an automorphism of $V'$, it is also an automorphism of $V$.

(ii) Let $\phi$ be a derivation of $V$. For any $v^*\in V'$, define $(\phi(v^*),v)=(v^*,-\phi(v)),$ where $v\in V$. Now by Proposition \ref{VC-VA}, we have
\begin{eqnarray*}
&&(\phi(c|_{V_0}),v)=(c_{V_0},-\phi(v))=-(c_{V_0}\circ \phi,v)=0,\\
&&(\phi(\rho|_{V_0}),v)=(\rho_{V_0},-\phi(v))=-(\rho_{V_0}\circ \phi,v)=0,
\end{eqnarray*}
and
\begin{eqnarray*}
&&(\phi(Y(v^*,z)u^*,v)=(Y(v^*,z)u^*,-\phi(v))=-(v^*\otimes u^*,\Yup(z)\phi(v))\\
&=&-(v^*\otimes u^*,\phi\otimes\Id\circ\Yup_{V}(z)+\Id\otimes\phi\circ\Yup_{V}(z)(v))\\
&=&(Y(\phi(v^*),z)u^*+Y(v^*,z)\phi(u^*),v),
\end{eqnarray*}
where $v^*,u^*\in V',v\in V$. Hence, $\phi$ is a derivation of $V'$.

Conversely, it is similar to prove if $\phi$ is a derivation of $V'$, then it is also a derivation of $V$.                                                $\hfill\Box$

\begin{Definition}\label{Def3.3}
Let $(V,\Yup(z),c,\rho)$ be a graded vertex operator coalgebra and $g\in \Aut V$ a finite order automorphism of order $T$. Define $$V^g=V/U,$$where $U=\oplus_{i=1}^{T-1}V^{(r)}$ and $V^{(r)}=\{v\in V|g(v)=e^{\frac{2\pi ir}{T}}v\}.$ $V^g$ is called the fixed points quotient of $V$.
\end{Definition}

\begin{Proposition}\label{Prop3.4}
Let $(V,\Yup(z),c,\rho)$ be a graded vertex operator coalgebra and $g\in \Aut V$. Suppose $o(g)=T$, then $(V^g,\Yup_{V^g}(z),c\circ\pi,\rho\circ\pi)$ is also a graded vertex operator coalgebra, $V^g$ is called the fixed points quotient graded vertex operator coalgebra of $V.$
\end{Proposition}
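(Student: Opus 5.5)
By Proposition \ref{Quo}, it suffices to show that $U=\oplus_{r=1}^{T-1}V^{(r)}$ is a coideal of $V$. Then $V^g=V/U$ inherits a graded vertex operator coalgebra structure, with $\Yup_{V^g}(z)\circ\pi=(\pi\otimes\pi)\circ\Yup(z)$ and with counit and conformal functionals induced from $c$ and $\rho$.

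The first step is to set up the eigenspace decomposition. Since $g^T=\Id$, $g$ is diagonalizable, so $V=\oplus_{r=0}^{T-1}V^{(r)}$. We also need $g$ to preserve each $V_s$, so that $U$ is a graded subspace. I expect the grading to be preserved because $g$ commutes with $L(0)$. To see this, use $\rho\circ g=\rho$ together with $\Yup(z)\circ g=(g\otimes g)\circ\Yup(z)$:
$$\sum_k L(k)g(v)z^{k-2}=(\rho\otimes\Id)(g\otimes g)\Yup(z)v=g\sum_kL(k)vz^{k-2}.$$
Hence $gL(k)=L(k)g$ for all $k$, and in particular $g$ commutes with $L(0)$. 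It follows that $U=\oplus_s(U\cap V_s)$.

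The second step checks the counit conditions. For $u\in V^{(r)}$ with $r\neq0$ we have $c(u)=c(g(u))=e^{2\pi ir/T}c(u)$, which forces $c(u)=0$. The same argument applied to $\rho$ gives $\rho(u)=0$.

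The third step is the comultiplication condition, which is the main point. Since $g\otimes g$ is diagonalizable on $V\otimes V$, we can decompose
$$V\otimes V=\bigoplus_{a,b}V^{(a)}\otimes V^{(b)},$$
where $g\otimes g$ acts on $V^{(a)}\otimes V^{(b)}$ by $e^{2\pi i(a+b)/T}$. For $u\in V^{(r)}$, the relation $(g\otimes g)\Delta_k(u)=\Delta_k(g u)=e^{2\pi ir/T}\Delta_k(u)$ shows that $\Delta_k(u)$ lies in the sum of those components with $a+b\equiv r\pmod T$. When $r\not\equiv0$, the indices $a$ and $b$ cannot both be $0$. Therefore
$$\Delta_k(u)\in U\otimes V+V\otimes U.$$

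The only delicate point is the step involving gradedness, namely that $g$ preserves degrees. This follows from the $L(0)$ computation above, so no genuine obstacle is expected. All the remaining axioms then follow from Proposition \ref{Quo}, and $\pi$ is a homomorphism of graded vertex operator coalgebras.
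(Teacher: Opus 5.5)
Your proposal is correct and follows essentially the same route as the paper. The paper reduces to showing $U$ is a coideal via Proposition \ref{Quo}, and uses $\rho\circ g=\rho$ together with $\Yup(z)\circ g=(g\otimes g)\circ\Yup(z)$ to get $gL(k)=L(k)g$, so that $U$ is graded. It then kills $c$ and $\rho$ on $U$ by the eigenvalue argument, and uses the $g\otimes g$-eigenspace decomposition of $V\otimes V$ to get $\Delta_k(U)\subseteq U\otimes V+V\otimes U$. Your direct bookkeeping with $a+b\equiv r\pmod T$ is just a cleaner version of the paper's argument by contradiction.
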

\proof By Proposition \ref{Quo}, we prove $U$ is a coideal. First, we have
\begin{eqnarray*}
L(k)\circ g&=&\Res_zz^{-k+1}(\rho\otimes \Id)\circ\Yup(z)\circ g\\
&=&\Res_zz^{-k+1}(\rho\otimes \Id)\circ(g\otimes g)\circ\Yup(z)\\
&=&\Res_zz^{-k+1}(\rho\circ g\otimes g)\circ\Yup(z)\\
&=&\Res_zz^{-k+1}g\circ(\rho\otimes \Id)\circ\Yup(z)=g\circ L(k).
\end{eqnarray*}
The fourth identity follows from composition properties of linear maps. Hence $g$ preserves each homogeneous space of $V$.

Let $V^{(r)}$ denote the eigenspace of $g$ with eigenvalue $e^{2\pi i\frac{r}{T}}$, we have $U=\oplus_{r\neq0}V^{(r)}$. Now suppose $u\in V^{(r)}$, we have
\begin{eqnarray*}
c(u)=c\circ g(u)=e^{2\pi i\frac{r}{T}}c(u),\rho(u)=\rho\circ g(u)=e^{2\pi i\frac{r}{T}}\rho(u).
\end{eqnarray*}
Thus, we have $c(u)=\rho(u)=0.$ By linearity, we have $c(U)=\rho(U)=0.$

Now suppose there is $u\in V^{(r)}$ such that $\Delta_k(u)\notin V\otimes U+U\otimes V,$ then $\Delta_k(u)$ has summands of elements of $V^{(0)}\otimes V^{(0)}$. Now we have
\begin{eqnarray*}
e^{2\pi i\frac{r}{T}}\Delta_k(u)=\Delta_k(g(u))=g\otimes g\circ\Delta_k(u)
\end{eqnarray*}
Writing $\Delta_k(u)$ into sum of linear independent elements from $V^{(t)}\otimes V^{(s)}$ and comparing both sides, we get a contradiction. Hence $\Delta_k(u)\in V\otimes U+U\otimes V.$ By linearity, we have $\Delta_k(U)\subseteq V\otimes U+U\otimes V.$

From above discussion, we have $U$ is a coideal. Hence, $V^g=V/U$ is a graded vertex operator coalgebra.                                     $\hfill\Box$

\begin{Remark}\label{Rmk2.1}
(i) Note that $V^{(0)}=\{v\in V|g(v)=v\}$ is not a sub graded vertex operator coalgebra of $V$ in general, since $\Yup(z)$ may be not closed on $V^{(0)}$.

(ii) Suppose $v\in V^{(r)},$ from $(g\otimes g)\Yup(z)v=\Yup(z)gv$, we have $$\Delta_k(v)\subseteq\oplus_{s=0}^{T-1}V^{T-s}\otimes V^{s+r},$$ for all $k\in\mathbb{Z}$.
\end{Remark}

Now we introduce the notion of $g$-twisted $V$-comodules. First, for a linear map $\phi:V\rightarrow V$, let $z^\phi:V\rightarrow V\{z\}$ be a linear map which is defined as $$z^\phi(v)=z^{\lambda}(v),$$ where $v$ is an eigenvector of $\phi$ with eigenvalue $\lambda$.

\begin{Definition}
Let $(V,\Yup(z),c,\rho)$ be a graded vertex operator coalgebra and $g\in \Aut V$ such that $o(g)=T$. Set $V^{(r)}=\{v\in V|g(v)=e^{2\pi i\frac{r}{T}}\}$. If $\mathcal {M}$ is a vector space equipped
with a linear map
\begin{eqnarray*}
&\Yup_\mathcal {M}(z):\mathcal {M}\to V\otimes\mathcal {M}[[z, z^{-1}]],\\
m&\mapsto \Yup_\mathcal {M}(z)(m)=\sum_{k\in\mathbb{Z}}(z^{\frac{-\ln g}{2\pi i}}\otimes\Id)\circ\Delta_{\mathcal {M},k}(m)z^{-k-1},
\end{eqnarray*}
where $\ln g(v)=2\pi i\frac{r}{T}v$ for $v\in V^{(r)}$ and extends to $V$ linearly, satisfying the following conditions:

(i) For any $\ m\in \mathcal {M}$, $\Delta_{\mathcal {M},k}(m)\in V\otimes \mathcal {M}$ is a finite sum,
\begin{eqnarray*}
&&\Delta_{\mathcal {M},k}(m),k<<0,\\
&&(c\otimes\Id)\circ\Yup_{\mathcal {M}}(z)=\Id.
\end{eqnarray*}

(ii) For any $m\in\mathcal {M}$, the following identity
    \begin{eqnarray}\label{JIM3.4}
      &&z_0^{-1}\delta(\frac{z_1-z_2}{z_0})(\Id\otimes \Yup_\mathcal {M} (z_2))\circ \Yup_\mathcal {M} (z_1)(m)\nonumber\\
      &&~~~~~~~~~~-z_0^{-1}\delta(\frac{z_2-z_1}{-z_0})(\tau\otimes\Id)\circ(\Id\otimes \Yup_\mathcal {M} (z_1))\circ \Yup_\mathcal {M} (z_2)(m)\nonumber\\
      &&=z_2^{-1}\delta(\frac{z_1-z_0}{z_2})((\frac{z_1-z_0}{z_2})^{\frac{-\ln g}{2\pi i}}\otimes\Id\otimes\Id)\circ(\Yup (z_0)\otimes \Id)\circ \Yup_\mathcal {M} (z_2)(m)
    \end{eqnarray}
  holds on $\mathcal {M}$. This identity is also called twisted Jacobi identity.

  (iii) Write $(\rho\otimes \Id)\circ\Yup_\mathcal {M} (z)=\sum_{k\in\mathbb{Z}}L(k)z^{k-2}$, then $L(k)\in\End(\mathcal {M})$ and the following Virasoro identity
    \begin{eqnarray}\label{VIM3.5}
     [L(k),L(j)]=(k-j)L(k+j)+\frac{k^3-k}{12}\delta_{k+j,0}d
    \end{eqnarray}
  holds on $\mathcal {M}$.

  (iv) The following identities
    \begin{eqnarray}\label{L(1D)M3.6}
     (L(1)\otimes\Id)\circ \Yup_\mathcal {M} (z)=\frac{d}{dz}\Yup_\mathcal {M} (z)=\Yup_\mathcal {M}(z)\circ L(1)-(\Id\otimes L(1))\circ\Yup_\mathcal {M}(z)
    \end{eqnarray}
  holds on $\mathcal {M}$.
Then, $(\mathcal {M},\Yup_\mathcal {M}(z))$ is called a weak $g$-twisted $V$-comodule.

If $g=1$, this reduces to the definition of weak $V$-comodule.

A weak $g$-twisted $V$-comodule $\mathcal {M}$ is called an admissible $g$-twisted $V$-comodule if $\mathcal {M}$ has a $\frac{1}{T}\mathbb{N}$-gradation $\mathcal {M}=\bigoplus_{t\in\frac{1}{T}\mathbb{N}}M_t$ such that
\begin{eqnarray*}\label{CD1}
\Delta_{\mathcal {M},k}M_t\subset \oplus_{r=0}^{T-1}(V^{(r)}\otimes \mathcal {M})_{t+k+1+\frac{r}{T}}.
\end{eqnarray*}

An admissible $g$-twisted $V$-comodule $\mathcal {M}$ is said to be irreducible if $\mathcal {M}$ has no non-trivial sub admissible $g$-twisted $V$-comodule. When an admissible $g$-twisted $V$-comodule $\mathcal {M}$ is a direct sum of irreducible admissible $g$-twisted $V$-comodules, $\mathcal {M}$ is called completely reducible.

$V$ is called $g$-corational if all admissible $g$-twisted $V$-comodules are completely reducible.
\end{Definition}

\begin{Proposition}\label{WC-WA}
Let $(V,\Yup(z),c,\rho)$ be a graded vertex operator coalgebra and $g\in \Aut V$ such that $o(g)=T$. Let $(\mathcal {M},\Yup_\mathcal {M}(z))$ be an admissible $g$-twisted $V$-comodule.

(i) (Weak cocommutativity) For $m\in \mathcal {M},u^*,v^*\in V',m^*\in\mathcal {M}'$, there exist $q\in \mathbb{N}$ such that
\begin{eqnarray*}
&&(z_1-z_2)^q(u^*\otimes v^*\otimes m^*,(\Id\otimes \Yup_\mathcal {M} (z_2))\circ \Yup_\mathcal {M} (z_1)m)\\
&=&(z_1-z_2)^q(u^*\otimes v^*\otimes m^*,(\tau\otimes\Id)\circ(\Id\otimes \Yup_\mathcal {M} (z_1))\circ \Yup_\mathcal {M} (z_2)m).
\end{eqnarray*}

(ii) (Weak coassociativity) For $m\in \mathcal {M},u^*,v^*\in V',m^*\in\mathcal {M}'$, there exist $q\in \mathbb{N}$ such that
\begin{eqnarray*}
&&(z_0+z_2)^q(u^*\otimes v^*\otimes m^*,((z_2+z_0)^{\frac{\ln g}{2\pi i}}\otimes \Id\otimes\Id)\circ(\Yup (z_0)\otimes \Id)\circ \Yup_\mathcal {M} (z_2)m)\\
&=&(z_0+z_2)^q(u^*\otimes v^*\otimes m^*,((z_2+z_0)^{\frac{\ln g}{2\pi i}}\otimes \Id\otimes\Id)(\Id\otimes \Yup_\mathcal {M} (z_2))\circ \Yup_\mathcal {M} (z_0+z_2)m).
\end{eqnarray*}
\end{Proposition}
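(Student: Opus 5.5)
The plan is to derive both properties from the twisted Jacobi identity (\ref{JIM3.4}). We pair it with a fixed $u^*\otimes v^*\otimes m^*$, extract suitable residues, and clear denominators with a large enough $q$. By linearity we may take $u^*\in (V^{(r)})^*$ homogeneous, i.e.\ $u^*$ vanishing on $V^{(s)}$ for $s\neq r$; this uses that $g$ preserves each $V_s$, as shown in the proof of Proposition \ref{Prop3.4}. We may also take $v^*$ and $m^*$ homogeneous.

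\emph{Finiteness.} Since $V$ is graded with finite-dimensional pieces bounded below, and $\mathcal {M}$ is $\frac{1}{T}\mathbb{N}$-graded, the admissibility condition $\Delta_{\mathcal {M},k}M_t\subset \oplus_{r}(V^{(r)}\otimes \mathcal {M})_{t+k+1+\frac{r}{T}}$ has the following consequence. For fixed homogeneous $m$ and $u^*$, the coefficient $(u^*\otimes \Id,\Delta_{\mathcal {M},k}(m))$ vanishes for $k\gg 0$, because the $\mathcal {M}$-component would need negative weight. Together with $\Delta_{\mathcal {M},k}(m)=0$ for $k\ll0$, the pairing $(u^*\otimes v^*\otimes m^*,(\Id\otimes \Yup_\mathcal {M} (z_2))\circ \Yup_\mathcal {M} (z_1)m)$ is a finite-type series. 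More precisely, it is $z_1^{-r/T}$ times a Laurent series in $z_1$ that is truncated from below in the relevant sense. The same holds for the other orderings. This is the analogue of the truncation $u_tn=0$ for $t\gg0$ in the module case.

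\emph{Weak cocommutativity.} Multiply (\ref{JIM3.4}) paired with $u^*\otimes v^*\otimes m^*$ by $z_0^q$ and take $\Res_{z_0}$. On the left the delta functions yield $(z_1-z_2)^q$ times the difference of the two compositions. On the right we get $\Res_{z_0} z_0^q z_2^{-1}\delta(\frac{z_1-z_0}{z_2})(\cdots)(u^*\otimes v^*,\Yup(z_0)\cdot)$. Since $V$ is graded, $(u^*\otimes v^*,\Delta_k(w))=0$ for $k\ge q$ when $q$ is large; here $q$ depends only on the weights of $u^*$ and $v^*$ and on the finitely many weights of vectors $w$ appearing, which are bounded by the finiteness step. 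Hence $z_0^q(u^*\otimes v^*,\Yup(z_0)w)\in \C[[z_0]]$, the residue vanishes, and (i) follows.

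\emph{Weak coassociativity.} Multiply by $z_1^{r/T}(z_1)^{q}$ and take $\Res_{z_1}$. Choose $q$ so that $z_1^{q+r/T}(u^*\otimes v^*\otimes m^*,(\tau\otimes\Id)\circ(\Id\otimes \Yup_\mathcal {M} (z_1))\circ \Yup_\mathcal {M} (z_2)m)$ involves only nonnegative powers of $z_1$. This is possible by the finiteness step applied to the $\mathcal {M}$-slot paired with $u^*$, and it makes the second term disappear. On the first term, the standard delta identity replaces $z_1$ by $z_2+z_0$. On the right, $\Res_{z_1}z_2^{-1}\delta(\frac{z_1-z_0}{z_2})$ applied to the factor $(\frac{z_1-z_0}{z_2})^{-\ln g/2\pi i}$ together with the prefactor gives $(z_2+z_0)^{q}$ times the $(z_2+z_0)^{\ln g/2\pi i}$-twisted coassociator, with the branch conventions as in \cite{DLM2}. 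Comparing the two sides gives (ii) after renaming.

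\emph{Main obstacle.} The hardest step is the careful handling of the fractional powers $z^{\pm\ln g/2\pi i}$. One has to track that the $V$-component paired with $u^*$ lies in $V^{(r)}$ (Remark \ref{Rmk2.1}(ii)), so that every twisting operator acts as a scalar power $z^{\pm r/T}$. One then verifies the identity $z_2^{-1}\delta(\frac{z_1-z_0}{z_2})(\frac{z_1-z_0}{z_2})^{-r/T}=z_1^{-1}\delta(\frac{z_2+z_0}{z_1})(\frac{z_2+z_0}{z_1})^{r/T}$, used exactly as in the twisted module case.
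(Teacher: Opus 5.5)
Your residue strategy is the paper's: $\Res_{z_0}z_0^{q}$ of the twisted Jacobi identity for (i), and $\Res_{z_1}z_1^{q+r/T}$ for (ii). The gap is in the one non-formal input, your ``finiteness step'', which is false as stated and is also the wrong kind of statement. Take $m\in M_t$ and homogeneous $u^*$ vanishing off $V^{(r)}$. Admissibility places $(u^*\otimes\Id)\Delta_{\mathcal M,k}(m)\in\mathcal M$ in weight $t+k+1+\frac rT-\mathrm{wt}\,u^*$, which is negative only for $k\ll0$; nothing forces vanishing for $k\gg0$. For a concrete counterexample, let $\mathcal V$ be the Heisenberg vertex operator algebra with $[\alpha(p),\alpha(q)]=p\delta_{p+q,0}$, and let $V=\mathcal M=\mathcal V'$, with $\mathcal M$ the dual comodule of the adjoint module and $g=\Id$. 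With $m=\mathbf 1^*$ and $u^*=\alpha(-1)\mathbf 1$ one gets $(\Delta_{\mathcal M,k}(m),u^*\otimes\alpha(-k)\mathbf 1)=(\mathbf 1^*,\alpha(k)\alpha(-k)\mathbf 1)=k\neq0$ for every $k\geq1$. Consequences:
\begin{itemize}
\item The intermediate vectors $w$ in (i) and $m''$ in (ii), to which the inner $\Yup(z_0)$ or $\Yup_{\mathcal M}(z_1)$ is applied, are in general infinitely many with unbounded weights. So ``finitely many weights of $w$'' is false.
\item The pairing of $(\Id\otimes\Yup_{\mathcal M}(z_2))\circ\Yup_{\mathcal M}(z_1)m$ with $u^*\otimes v^*\otimes m^*$ is truncated from above, not from below, in $z_1$.
\item Even a correct bound for each fixed vector would not suffice: a single $q$ must kill the contributions of all intermediate vectors at once.
\end{itemize}

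What the argument needs, and what the paper's proof asserts, is a bound uniform in the \emph{input} once all dual vectors are fixed.
\begin{itemize}
\item In (i), $(u^*\otimes v^*,\Delta_k(w))\neq0$ forces $k=\mathrm{wt}\,u^*+\mathrm{wt}\,v^*-\mathrm{wt}\,w-1$. This is bounded above for all $w\in V$ because $V_s=0$ for $s\ll0$. Hence $(u^*\otimes v^*,\Yup(z_0)w)\in\mathbb C z_0^{-K}[[z_0]]$ with $K$ independent of $w$.
\item In (ii), $(u^*\otimes m^*,\Delta_{\mathcal M,k}(m''))\neq0$ forces $k=\mathrm{wt}\,u^*+\mathrm{wt}\,m^*-\mathrm{wt}\,m''-1-\frac rT\leq\mathrm{wt}\,u^*+\mathrm{wt}\,m^*-1-\frac rT$ for all $m''\in\mathcal M$, since $\mathcal M$ is $\frac1T\mathbb N$-graded. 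Hence $(u^*\otimes m^*,(z_1^{\frac{\ln g}{2\pi i}}\otimes\Id)\circ\Yup_{\mathcal M}(z_1)m'')\in\mathbb C z_1^{-K}[[z_1]]$ uniformly in $m''$, and this is what kills the $\tau$-term.
\end{itemize}
This is the genuine analogue of $u_tn=0$ for $t\gg0$: under $\mathcal M=\mathcal N'$, the fixed module vector $n$ corresponds to the fixed dual vector $m^*$, not to $m$. With these two uniform bounds in place of your finiteness step, the rest of your computation goes through as in the paper. That includes the delta-function substitution $z_1\mapsto z_2+z_0$ and the identity $z_2^{-1}\delta(\frac{z_1-z_0}{z_2})(\frac{z_1-z_0}{z_2})^{-r/T}=z_1^{-1}\delta(\frac{z_2+z_0}{z_1})(\frac{z_2+z_0}{z_1})^{r/T}$.
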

\proof The proof are similar to the proofs of Proposition 2.6 and 2.9 in \cite{H1}.

(i) Suppose $u^*,v^*$ are homogeneous, then we have $(u^*\otimes v^*,\Yup(z_0)v)\in\mathbb{C}z_0^{-K}[[z_0]]$ for some $K\in\mathbb{N}$ and all $v\in V.$ Now in twisted Jacobi identity \ref{JIM3.4}, taking $\Res_{z_0}z_0^{K}$, we get the desired equation.

(ii) Similarly, we have $(u^*\otimes m^*,(z_1^{\frac{\ln g}{2\pi i}}\otimes \Id)\circ\Yup_\mathcal {M}(z_1)m)\in\mathbb{C}z_1^{-K}[[z_1]]$ for some $K\in\mathbb{N}$ and all $m\in \mathcal {M}.$ Now in twisted Jacobi identity \ref{JIM3.4}, taking $\Res_{z_1}z_1^{K}z_1^{\frac{\ln g}{2\pi i}}\otimes\Id\otimes\Id$, we get the desired equation.                                                           $\hfill\Box$

\begin{Proposition}\label{Prop3.7}
Let $(V,\Yup (z),c,\rho)$ be a graded vertex operator coalgebra and $(\mathcal {V},Y,\textbf{1},\omega)$ a vertex operator algebra. Let $(\mathcal {M},\Yup_\mathcal {M}(z))$ be an admissible $g$-twisted $V$-comodule and $(\mathcal {N},Y_\mathcal {N})$ an admissible $h$-twisted $\mathcal {V}$-module, where $g\in\Aut V,h\in\Aut\mathcal {V}$ with $o(g)=T_g,o(h)=T_h$. Suppose $V_0\nsubseteq\ker c,V_2\nsubseteq\ker\rho.$ Then we have

(i) $\mathcal {N}'=\oplus_{s\in\mathbb{N}}N_s^*$ is an admissible $h^{-1}$-twisted $\mathcal {V}'$-comodule with $$(\Yup_{\mathcal {N}'}(z)n^*,v\otimes n)=(n^*,Y_\mathcal {N}(v,z)n),$$where $n^*\in \mathcal {N}',v\in\mathcal {V},n\in \mathcal {N}.$

(ii) $\mathcal {M}'=\oplus_{s\in\mathbb{N}}M_s^*$ is an admissible $g^{-1}$-twisted $V'$-module with $$(Y_{\mathcal {M}'}(v^*,z)m^*,m)=(v^*\otimes m^*,\Yup_\mathcal {M}(z)m),$$where $v^*\in V',m^*\in \mathcal {M}',m\in \mathcal {M}.$
\end{Proposition}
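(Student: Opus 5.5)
Both parts are proved by transporting the axioms through the pairing $(\cdot,\cdot)$ between the graded dual and the original space, in the same way as Proposition \ref{VC-VA} and Proposition \ref{Prop3.2}. Each homogeneous piece $N_s$, $M_s$ may be infinite dimensional. I will therefore only use the restricted pairing and work with matrix coefficients $(u^*\otimes m^*,\cdot)$, $(v^*\otimes n^*,\cdot)$. I treat (ii) in detail; (i) is the formal mirror image.

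\textbf{Step 1: well-definedness and grading.} For homogeneous $v^*\in V'$ and $m\in M_t$, the admissibility condition $\Delta_{\mathcal M,k}M_t\subset\oplus_r(V^{(r)}\otimes\mathcal M)_{t+k+1+\frac rT}$ shows that $(v^*\otimes m^*,\Delta_{\mathcal M,k}m)$ is nonzero only when $m^*\in M_s^*$ with $s$ determined by $\wt v^*$, $t$ and $k$. Hence $Y_{\mathcal M'}(v^*,z)m^*$ is a well-defined element of $\mathcal M'\{z\}$, and its components satisfy $v^*_n M_s^*\subset M^*_{\wt v^*+s-n-1}$. Since $\Delta_{\mathcal M,k}=0$ for $k\ll 0$, the truncation $v^*_nm^*=0$ for $n\gg0$ follows. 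By Proposition \ref{Prop3.2}, $g$ acts on $V'$ by $(g v^*,v)=(v^*,g^{-1}v)$. So if $v^*$ pairs nontrivially only with $V^{(r)}$, then $v^*$ lies in the $e^{-2\pi i r/T}$-eigenspace of $g$, i.e.\ in the $e^{2\pi i r/T}$-eigenspace of $g^{-1}$. The factor $z^{-\ln g/2\pi i}$ in $\Yup_{\mathcal M}$ gives exactly the powers $z^{-k-1-r/T}$. This matches the mode condition $Y(u,z)=\sum_{t\in\Z+r/T}u_tz^{-t-1}$ for a $g^{-1}$-twisted module, once the sign convention between $r$ and $-r$ is tracked. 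The vacuum axiom $Y_{\mathcal M'}(\1,z)=\Id$ follows from $(c\otimes\Id)\circ\Yup_{\mathcal M}(z)=\Id$ together with $\1_{V'}=c|_{V_0}$.

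\textbf{Step 2: twisted Jacobi identity.} Pair both sides of (\ref{JIM3.4}) with $u^*\otimes v^*\otimes m^*$ and evaluate at $m$. The first two terms become the matrix coefficients of $Y(u^*,z_1)Y(v^*,z_2)m^*$ and $Y(v^*,z_2)Y(u^*,z_1)m^*$; the flip $\tau$ accounts for the order. The right-hand side becomes $(Y(Y(u^*,z_0)v^*,z_2)m^*,m)$, using the definition of $Y_{V'}$ in Proposition \ref{VC-VA}(ii). The factor $((z_1-z_0)/z_2)^{-\ln g/2\pi i}$ acting on $V^{(r)}$ becomes $((z_1-z_0)/z_2)^{-r/T}$ for $u^*$ of the corresponding $g^{-1}$-weight. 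This is identity (\ref{ID2.3}) for $g^{-1}$.

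\textbf{Main obstacle.} The delicate point is that these identities hold only coefficientwise, as formal series involving non-integral powers and products like $Y(u^*,z_1)Y(v^*,z_2)$. One must check that each coefficient is a finite sum, so that the pairing commutes with the products. This again follows from the grading in Step 1, since only finitely many homogeneous components contribute. Consistent bookkeeping of the sign of $r$ under $g\mapsto g^{-1}$ is also needed. The Virasoro conditions (iii)–(iv) are not required for a twisted module, so they can be ignored here.

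\textbf{Part (i).} This is the reverse transport. Define $\Yup_{\mathcal N'}(z)n^*$ by $(\Yup_{\mathcal N'}(z)n^*,v\otimes n)=(n^*,Y_{\mathcal N}(v,z)n)$. The grading condition on $\mathcal N$ guarantees that for fixed $k$ and $n^*\in N_s^*$, the functional is supported on finitely many $\mathcal V_p\otimes N_q$ with each piece finite dimensional on the $\mathcal V$ side. Hence $\Delta_{\mathcal N',k}(n^*)$ lies in $\mathcal V'\otimes\mathcal N'$ and is a finite sum. Some care is needed here when the $N_q$ are infinite dimensional: one has to argue that only finitely many $(p,q)$ occur and then use $\dim\mathcal V_p<\infty$.

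The remaining axioms come from dualizing. The counit follows from $Y(\1,z)=\Id$ and $c_{\mathcal V'}=\1^*$. The Jacobi identity follows from (\ref{ID2.3}) for $h$, giving $h^{-1}$ by the same sign computation as in Step 1. The Virasoro relation and (\ref{L(1D)M3.6}) follow from $\rho_{\mathcal V'}=\omega^*$, the Virasoro relations of $L(n)$ on $\mathcal N$, and the $L(-1)$-derivative property. Dualizing the commutator $[L(-1),Y(v,z)]$ produces exactly the two expressions in (\ref{L(1D)M3.6}), with $L(-1)$ becoming the dual $L(1)$.
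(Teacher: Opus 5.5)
Your proposal is correct and follows the paper's route: every axiom is transported through the pairing. The paper writes out (i) and calls (ii) similar, while you do the reverse, and your extra checks fill in what the paper dismisses as ``trivial'' (finite sums, coefficientwise well-definedness, and matching $V^{(r)}$ for $g$ with the corresponding eigenspace for $g^{-1}$ on $V'$). One small slip: the truncation $v^*_nm^*=0$ for $n\gg0$ follows from your own grading statement $v^*_nM_s^*\subset M^*_{\wt v^*+s-n-1}$ together with $M_t=0$ for $t<0$, not from $\Delta_{\mathcal M,k}=0$ for $k\ll0$, which for fixed $m$ only gives $(v^*_nm^*,m)=0$ for $n\ll0$.
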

\proof (i) Let $n^*\in\mathcal {N}',u,v\in \mathcal {V},n\in\mathcal {N}$, by definitions, we have
\begin{eqnarray*}
((\Id\otimes \Yup_\mathcal {\mathcal {N}'} (z_2))\circ \Yup_{\mathcal {N}'} (z_1)n^*,u\otimes v\otimes n)&=&(n^*,Y_\mathcal {N}(u,z_1)Y_\mathcal {N}(v,z_2)n),\\
((\tau\otimes\Id)\circ(\Id\otimes \Yup_\mathcal {\mathcal {N}'} (z_1))\circ \Yup_{\mathcal {N}'} (z_2)n^*,u\otimes v\otimes n)&=&(n^*,Y_\mathcal {N}(v,z_2)Y_\mathcal {N}(u,z_1)n),\\
((\Yup (z_0)\otimes \Id)\circ \Yup_{\mathcal {N}'} (z_2)n^*,u\otimes v\otimes n)&=&(n^*,Y_\mathcal {N}(Y(u,z_0)v,z_2)n).
\end{eqnarray*}
Since $\mathcal {N}$ is an admissible $h$-twisted $\mathcal {V}$-module, suppose $u\in\mathcal {V}^{(r)}$, we have
\begin{eqnarray*}
&&z_0^{-1}\delta(\frac{z_1-z_2}{z_0})Y_\mathcal {N}(u,z_1)Y_\mathcal {N}(v,z_2)-z_0^{-1}\delta(\frac{-z_2+z_1}{z_0})Y_\mathcal {N}(v,z_2)Y_\mathcal {N}(u,z_1)\nonumber\\
&&\ \ \ \ \ =z_2^{-1}\delta(\frac{z_1-z_0}{z_2})(\frac{z_1-z_0}{z_2})^{\frac{-r}{T_h}}Y_\mathcal {N}(Y(u,z_0)v,z_2),\\
&&\ \ \ \ \ =z_2^{-1}\delta(\frac{z_1-z_0}{z_2})Y_\mathcal {N}(\cdot,z_2)\circ (Y(\cdot,z_0)\otimes\Id)\circ((\frac{z_1-z_0}{z_2})^{\frac{-\ln h}{2\pi i}}\otimes\Id\otimes\Id)u\otimes v\otimes n.
\end{eqnarray*}
Hence, we get
    \begin{eqnarray*}
      &&z_0^{-1}\delta(\frac{z_1-z_2}{z_0})(\Id\otimes \Yup_{\mathcal {N}'} (z_2))\circ \Yup_{\mathcal {N}'} (z_1)n^*\\
      &&~~~~~~~~~~-z_0^{-1}\delta(\frac{z_2-z_1}{-z_0})(\tau\otimes\Id)\circ(\Id\otimes \Yup_{\mathcal {N}'} (z_1))\circ \Yup_{\mathcal {N}'} (z_2)n^*\\
      &&=z_2^{-1}\delta(\frac{z_1-z_0}{z_2})((\frac{z_1-z_0}{z_2})^{\frac{-\ln h^{-1}}{2\pi i}}\otimes\Id\otimes\Id)\circ(\Yup (z_0)\otimes \Id)\circ \Yup_{\mathcal {N}'} (z_2)n^*.
    \end{eqnarray*}
This implies the twisted Jacobi identity holds for $h^{-1}$.

Now let $n^*\in N_t,u\in\mathcal {V}^{(r)},n\in\mathcal {N}$. Since $\mathcal {N}$ is an admissible $h$-twisted $\mathcal {V}$-module, if
\begin{eqnarray*}
&&(\Delta_{\mathcal {N}',k}(n^*),u\otimes n)=(\Res_zz^kz^{\frac{\ln h^{-1}}{2\pi i}}\otimes\Id\circ\Yup(z)n^*,u\otimes n)\\
&=&\Res_zz^{k+\frac{r}{T_h}}(n^*,Y_\mathcal {N}(u,z)n)=(n^*,u_{k+\frac{r}{T_h}}n)\neq0,
\end{eqnarray*}
we have $t=\wt u+\wt n-k-1-\frac{r}{T_h}$. This means $$\Delta_{\mathcal {N}',k}(n^*)\in\oplus_{r=0}^{T-1}(V^{(r)}\otimes \mathcal {M})_{t+k+1+\frac{r}{T_h}},$$ i.e., the $\frac{1}{T}\mathbb{N}$-gradation condition holds.

Other axioms are trivial. Hence $\mathcal {N}'$ is an admissible $h^{-1}$-twisted $\mathcal {V}'$-comodule.

(ii) The proof of (ii) is similar, we omit it.                                                              $\hfill\Box$

\begin{Proposition}\label{Prop3.9}
Let $(V,\Yup (z),c,\rho)$ be a graded vertex operator coalgebra, and $(\mathcal {M},\Yup_\mathcal {M}(z))$ an admissible $g$-twisted $V$-comodule with $o(g)=T.$ Then, $(\mathcal {M},\Yup_\mathcal {M}(z))$ is an admissible $V^g$-comodule.
\end{Proposition}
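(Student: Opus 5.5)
The natural comodule structure on $\mathcal{M}$ over $V^g=V/U$ is obtained by pushing forward along the canonical projection $\pi:V\rightarrow V^g$. I will set
$$\Yup_{\mathcal{M}}^{V^g}(z)=(\pi\otimes\Id)\circ\Yup_{\mathcal{M}}(z),$$
with mode maps $(\pi\otimes\Id)\circ\Delta_{\mathcal{M},k}$. The key observation is that $\pi$ kills $U=\oplus_{r\neq0}V^{(r)}$. After applying $\pi$, only the components of $\Delta_{\mathcal{M},k}(m)$ lying in $V^{(0)}\otimes\mathcal{M}$ survive. On $V^{(0)}$ the operator $\ln g$ is zero, so the factor $z^{-\ln g/2\pi i}$ becomes the identity, and all twisting factors $\bigl(\frac{z_1-z_0}{z_2}\bigr)^{-\ln g/2\pi i}$ disappear. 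The resulting series lies in $V^g\otimes\mathcal{M}[[z,z^{-1}]]$ with integral powers, as required for an untwisted comodule.

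The verification proceeds axiom by axiom, applying $\pi\otimes\Id\otimes\Id$ (respectively $\pi\otimes\Id$) to the corresponding axioms of the $g$-twisted comodule.

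\emph{Counit and truncation.} Since $c_{V^g}\circ\pi=c$ and $\rho_{V^g}\circ\pi=\rho$ by Proposition~\ref{Quo} and Proposition~\ref{Prop3.4}, the counit identity and the operators $L(k)$ are unchanged. Moreover $c$ and $\rho$ vanish on $U$, so the $U$-components never contributed to them in the first place. The Virasoro identity and the $L(1)$-derivative identity then transfer directly, because $\pi$ commutes with $\frac{d}{dz}$ and $\pi\circ L(1)=L(1)\circ\pi$. The latter holds because $g$ commutes with $L(k)$, as shown in the proof of Proposition~\ref{Prop3.4}.

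\emph{Jacobi identity.} Apply $\pi\otimes\pi\otimes\Id$ to the twisted Jacobi identity (\ref{JIM3.4}). For the two left-hand terms, the iterates become $(\Id\otimes\Yup^{V^g}_{\mathcal{M}})\circ\Yup^{V^g}_{\mathcal{M}}$ and its $\tau$-twisted counterpart, because $\pi\otimes\Id$ is applied factorwise. For the right-hand side I will use $\Yup_{V^g}(z_0)\circ\pi=(\pi\otimes\pi)\circ\Yup(z_0)$. The point needing care is that the twisting factor acts on the first tensor factor of $\Yup(z_0)v$ for $v$ in various eigenspaces. By Remark~\ref{Rmk2.1}(ii), $\Delta_k$ sends $V^{(r)}$ into $\oplus_s V^{(-s)}\otimes V^{(s+r)}$. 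After applying $\pi\otimes\pi$, only the term with both factors in $V^{(0)}$ survives, which forces $r=0$. Hence the twisting factor acts as the identity there, and we recover the untwisted Jacobi identity with $z_2^{-1}\delta\bigl(\frac{z_1-z_0}{z_2}\bigr)=z_1^{-1}\delta\bigl(\frac{z_2+z_0}{z_1}\bigr)$.

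\emph{Grading.} The admissible grading condition with $r=0$ gives $\Delta_{k}M_t\subset(V^g\otimes\mathcal{M})_{t+k+1}$. The grading on $V^g$ is well defined because $U$ is a graded subspace.

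\emph{Main obstacle.} The main difficulty is the bookkeeping of the twisting factors in the Jacobi identity. One must check carefully that every term in which a nontrivial power $z^{r/T}$ appears has a first tensor factor in $U$, and is therefore annihilated by $\pi$. I would handle this by decomposing $m$'s coproduct into $g$-eigencomponents and using Remark~\ref{Rmk2.1} together with the fact that $g\otimes g$ commutes with $\Yup$.
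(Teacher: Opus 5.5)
Your proposal is correct and follows the same route as the paper. The paper also defines the $V^g$-coaction as $(\pi\otimes\Id)\circ\Yup_{\mathcal{M}}(z)$ and observes that only the $g$-fixed components survive, so every twisting factor becomes trivial; your axiom-by-axiom check (in particular that $\pi$ composed with any power $x^{-\frac{\ln g}{2\pi i}}$ equals $\pi$, which collapses the twisted Jacobi identity (\ref{JIM3.4}) to the untwisted one) fills in what the paper leaves implicit.

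One small point that both you and the paper skip concerns the grading. The given gradation is indexed by $\frac{1}{T}\mathbb{N}$, whereas an admissible $V^g$-comodule needs an $\mathbb{N}$-gradation. To fix this, note that the $V^g$-coaction preserves the fractional part of the weight, and regroup: either view $\mathcal{M}=\bigoplus_{j=0}^{T-1}\bigoplus_{n\in\mathbb{N}}M_{n+\frac{j}{T}}$ as a direct sum of $T$ admissible $V^g$-comodules, or set $M'_n=\bigoplus_{j}M_{n+\frac{j}{T}}$.
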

\proof Define $\Yup_{V^g,\mathcal {M}(z)}:\mathcal {M}\rightarrow V^g\otimes \mathcal {M}$ by $(\pi\otimes\Id)\circ\Yup_\mathcal {M}(z)$, where $\pi$ is the canonical projection, we have $\mathcal {M}$ is an admissible $g$-twisted $V^g$-comodule. For any $v\in V^g$, we have $g(v)=v$. This implies $T=r=0$ in the definition of admissible $g$-twisted $V$-comodules. Hence, $(\mathcal {M},\Yup_{V^g,\mathcal {M}}(z))$ is an admissible $V^g$-comodule.                               $\hfill\Box$

\section{Coassociative coalgebra $C_g(V)$}
In this section, we introduce the coassociative coalgebras $C_g(V)$, where $V$ is a graded vertex operator coalgebra and $g\in\Aut V$ is a finite order automorphism of order $T$.

\begin{Definition}
Let $V$ be a graded vertex operator coalgebra and $g\in\Aut V$ a finite order automorphism of order $T$. Define
\begin{equation}\label{def}
 C_g(V)=\{v\in V|\Res_z(\frac{(1+z)^{L(0)-1+\delta_0+\frac{\ln g}{2\pi i}}}{z^{1+\delta_0}}\otimes \Id)\circ\Yup(z)v=0\},
\end{equation}
where $\delta_i:V\rightarrow V$ is a linear map defined by $\delta_i(u)=u$ if $u\in V^{(i)}$, and $\delta(u)=0$ if $u\in V^{(r)}$ with $r\neq i.$
\end{Definition}

\begin{Remark}\label{Rmk4.2}
(i) It is obvious that $C_g(V)$ is nonempty and a vector space.

(ii) If $g=\Id$, then $T=0$. In this case, we have $C_g(V)$ is the untwisted coassociative coalgebra $C(V)$ defined in \cite{W}.

(iii) From the proof of Proposition \ref{Prop3.4}, it is easy to see that $c\circ\delta_0=c,~\rho\circ\delta_0=\rho,$ and $c\circ\delta_i=0,~\rho\circ\delta_i=0,$ if $i\neq0.$

(iv) Recall the canonical projection $\pi:V\rightarrow V^g,$ it is obvious that $\pi\circ\delta_0=\pi$, and $\pi\circ\delta_i=0$ if $i\neq0.$
\end{Remark}

\begin{Lemma}\label{Lm4.3}
Suppose $v\in V^{(r)}$ is nonzero with $r\neq0$, then $v\notin C_g(V)$.
\end{Lemma}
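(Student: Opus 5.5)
The plan is to apply the map $\Id\otimes c$ to the defining condition (\ref{def}) of $C_g(V)$. The key fact is that, by (\ref{CI3.2}) and (\ref{CI3.3}), $(\Id\otimes c)\circ\Yup(z)v$ is a power series in $z$ with constant term $v$. If this map can be moved past the twisting operator, the residue should collapse to $v$ itself, so membership in $C_g(V)$ would force $v=0$.

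First I will determine how the operators $(1+z)^{L(0)-1+\delta_0+\frac{\ln g}{2\pi i}}$ and $z^{-1-\delta_0}$ act after $\Id\otimes c$ is applied. These operators act only on the first tensor factor, so they commute with $\Id\otimes c$.

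By Remark \ref{Rmk2.1}(ii), each $\Delta_k(v)$ lies in $\oplus_{s} V^{(T-s)}\otimes V^{(s+r)}$, with the indices read modulo $T$. By Remark \ref{Rmk4.2}(iii), $c$ vanishes on $V^{(i)}$ for $i\neq 0$. So after applying $\Id\otimes c$, only the summands whose second factor lies in $V^{(0)}$ survive. The corresponding first factors lie in $V^{(r)}$.

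On $V^{(r)}$ with $r\neq0$ we have $\delta_0=0$ and $\frac{\ln g}{2\pi i}=\frac{r}{T}$. Also, $g$ commutes with $L(0)$ (shown in the proof of Proposition \ref{Prop3.4}), so I may work homogeneous component by homogeneous component. Hence
$$0=(\Id\otimes c)\Res_z\Big(\frac{(1+z)^{L(0)-1+\delta_0+\frac{\ln g}{2\pi i}}}{z^{1+\delta_0}}\otimes\Id\Big)\Yup(z)v=\Res_z\frac{(1+z)^{L(0)-1+\frac{r}{T}}}{z}\,(\Id\otimes c)\Yup(z)v.$$

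Next I write $(\Id\otimes c)\Yup(z)v=\sum_{n\ge0}v_nz^n$ with $v_0=v$, using (\ref{CI3.2}) and (\ref{CI3.3}). The residue of $z^{-1}(1+z)^{A}\sum_{n\ge 0}v_nz^n$ is the $z^0$ coefficient of $(1+z)^A\sum_{n\ge0} v_nz^n$. Both factors are power series in nonnegative powers of $z$ with constant terms $1$ and $v_0$, so this coefficient is $v_0=v$. Thus $v\in C_g(V)$ would give $v=0$, a contradiction.

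The main obstacle is the bookkeeping step showing that $\Id\otimes c$ kills exactly the summands whose first factor lies outside $V^{(r)}$. This step is what fixes the exponent as $\frac{r}{T}$ and the power of $z$ as $z^{-1}$ with no $\delta_0$ correction. It rests on Remark \ref{Rmk2.1}(ii) together with $c\circ\delta_i=0$ for $i\neq0$. If the remark's index convention is off, I would instead redo it directly: decompose $\Yup(z)v$ into $g\otimes g$ eigencomponents and compare with $\Yup(z)gv=e^{2\pi i r/T}\Yup(z)v$.
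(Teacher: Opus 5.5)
Your proposal is correct and follows the paper's proof essentially step for step. Like the paper, you apply $\Id\otimes c$ to the defining residue, move it past the operator on the first tensor factor, use $g$-equivariance of $\Yup(z)$ with $c(V^{(i)})=0$ for $i\neq0$ to replace $\delta_0$ by $0$ and $\frac{\ln g}{2\pi i}$ by $\frac{r}{T}$, and then use (\ref{CI3.2})--(\ref{CI3.3}) to collapse the residue to $(\Id\otimes c)\circ\Delta_{-1}(v)=v$; your worry about the index convention in Remark \ref{Rmk2.1}(ii) is unfounded, and in any case $c\circ g=c$ shows directly that $(\Id\otimes c)\circ\Yup(z)$ maps $V^{(r)}$ into $V^{(r)}[[z]]$.
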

\proof From the proof of Proposition \ref{Prop3.4}, we know $c(V^{(r)})=0$ for $r\neq0$. Hence, for $v\in V^{(r)}$, if $v\in C_g(V)$, we have
  \begin{eqnarray*}
  0&=&\Res_z(\Id\otimes c)\circ(\frac{(1+z)^{L(0)-1+\delta_0+\frac{\ln g}{2\pi i}}}{z^{1+\delta_0}}\otimes \Id)\circ\Yup(z)v\\
  &=&\Res_z(\frac{(1+z)^{L(0)-1+\delta_0+\frac{\ln g}{2\pi i}}}{z^{1+\delta_0}}\otimes \Id)\circ(\Id\otimes c)\circ\Yup(z)v\\
  &=&\Res_z\sum_{j\geq0}(\tbinom{L(0)-1+\delta_0+\frac{\ln g}{2\pi i}}{j}z^{j-1-\delta_0}\otimes\Id)\circ(\Id\otimes c)\circ \Yup(z)v\\
  &=&\Res_z\sum_{j\geq0}(\tbinom{L(0)-1+\frac{r}{T}}{j}z^{j-1}\otimes\Id)\circ(\Id\otimes c)\circ \Yup(z)v\\
  &=&(\Id\otimes c)\circ \Delta_{-1}(v)=v.
  \end{eqnarray*}
The first identity follows from definition. The second identity follows from composition properties of linear maps. The fourth identity follows from the properties of linear map $c$ and Remark \ref{Rmk2.1}. This is impossible. Hence $v\notin C_g(V).$                                                                                                $\hfill\Box$

From Lemma \ref{Lm4.3}, the restriction of $\pi:V\rightarrow V^g$ on $C_g(V)$ is injective, still denoted by
\begin{eqnarray}\label{pi-C}
\pi:C_g(V)\rightarrow V^g,
\end{eqnarray}
see Definition \ref{Def3.3}. Hence we can view $C_g(V)$ as a subspace of $V^g.$

\begin{Lemma}\label{Lm4.4}
Suppose $v\in V^{(0)}$. If $\Res_z(\frac{(1+z)^{L(0)-1+\delta_0+\frac{\ln g}{2\pi i}}}{z^{1+\delta_0}}\otimes \Id)\circ\Yup(z)v=0,$ then $L(1)v+L(0)v=0.$
\end{Lemma}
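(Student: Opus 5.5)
Apply the functional $\Id\otimes c$ to the defining identity, exactly as in the proof of Lemma \ref{Lm4.3}, and then identify the resulting modes via the counit axioms (\ref{CI3.2}), (\ref{CI3.3}) together with the $L(1)$-derivative property (\ref{L(1D)3.6}).

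\textbf{Step 1: reduce the $\delta_0$ part.} Since $v\in V^{(0)}$, Remark \ref{Rmk2.1}(ii) puts $\Delta_k(v)$ in $\oplus_{s}V^{(T-s)}\otimes V^{(s)}$. By Remark \ref{Rmk4.2}(iii), $c$ kills $V^{(s)}$ for $s\neq 0$, so after applying $\Id\otimes c$ only the component in $V^{(0)}\otimes V^{(0)}$ survives. On that component $\delta_0=\Id$ and $\ln g=0$. Since $\Id\otimes c$ commutes with operators acting on the first tensor factor, the hypothesis gives
\[
0=\Res_z\frac{(1+z)^{L(0)}}{z^{2}}\,(\Id\otimes c)\circ\Yup(z)v .
\]

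\textbf{Step 2: identify the series.} Set $F(z)=(\Id\otimes c)\circ\Yup(z)v$. By (\ref{CI3.2}) and (\ref{CI3.3}), $F(z)\in V[[z]]$ with $F(0)=v$. Expanding $(1+z)^{L(0)}=1+L(0)z+\cdots$, the residue equals
\[
F_1+L(0)F_0,
\]
where $F_j$ is the coefficient of $z^j$ in $F(z)$. We have $F_0=v$. To find $F_1$, apply $\Id\otimes c$ to (\ref{L(1D)3.6}); this gives $\frac{d}{dz}F(z)=L(1)F(z)$. Hence $F(z)=e^{zL(1)}v$, which is the coalgebra analogue of the skew-symmetry fact $Y(v,z)\mathbf 1=e^{zL(-1)}v$. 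In particular $F_1=L(1)v$, and the residue is $L(1)v+L(0)v$, which proves the claim.

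\textbf{Main obstacle.} The delicate point is checking that $L(1)$ acting on the first tensor factor commutes with $\Id\otimes c$. In (\ref{L(1D)3.6}) the operator is written as $(L(1)\otimes\Id)\circ\Yup(z)$, and under the identification $V\otimes\mathbb{C}=V$ this becomes $L(1)\circ(\Id\otimes c)\circ\Yup(z)$, so the two operations do commute. One must also check that $L(1)$ preserves $V^{(0)}$; this follows because $g$ commutes with every $L(k)$, as shown in the proof of Proposition \ref{Prop3.4}. Finally, one should make sure the Step 1 reduction is correct on the mixed components: the factor $(1+z)^{\cdots}$ acts on the first tensor factor, which lies in $V^{(T-s)}$, but these terms are annihilated by $c$ in the second factor anyway, so they contribute nothing.
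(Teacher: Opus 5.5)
Your proposal is correct and follows essentially the same route as the paper. Both apply $\Id\otimes c$, use $v\in V^{(0)}$ together with $c(V^{(r)})=0$ for $r\neq 0$ to reduce the operator to $(1+z)^{L(0)}/z^{2}$, and then identify the residue as $L(1)v+L(0)v$ via the counit axiom and the $L(1)$-derivative property; the only difference is that the paper states your relation $F_1=L(1)F_0$ at the level of modes, as $\Delta_{-2}(v)=(L(1)\otimes\Id)\Delta_{-1}(v)$, rather than deriving it from $F'(z)=L(1)F(z)$.
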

\proof First, we know $\Res_z(\Id\otimes c)\circ(\frac{(1+z)^{L(0)-1+\delta_0+\frac{\ln g}{2\pi i}}}{z^{1+\delta_0}}\otimes \Id)\circ\Yup(z)v=0.$ On the other hand,
  \begin{eqnarray*}
  &&\Res_z(\Id\otimes c)\circ(\frac{(1+z)^{L(0)-1+\delta_0+\frac{\ln g}{2\pi i}}}{z^{1+\delta_0}}\otimes \Id)\circ\Yup(z)v\\
  &=&\Res_z(\frac{(1+z)^{L(0)-1+\delta_0+\frac{\ln g}{2\pi i}}}{z^{1+\delta_0}}\otimes \Id)\circ(\Id\otimes c)\circ\Yup(z)v\\
  &=&\Res_z\sum_{j\geq0}(\tbinom{L(0)-1+\delta_0+\frac{\ln g}{2\pi i}}{j}z^{j-1-\delta_0}\otimes\Id)\circ(\Id\otimes c)\circ \Yup(z)v\\
  &=&\Res_z\sum_{j\geq0}(\tbinom{L(0)}{j}z^{j-2}\otimes\Id)\circ(\Id\otimes c)\circ \Yup(z)v\\
  &=&(\Id\otimes c)\circ\Delta_{-2}(v)+(L(0)\otimes c)\circ\Delta_{-1}(v)\\
  &=&(\Id\otimes c)\circ(L(1)\otimes\Id)\circ\Delta_{-1}(v)+(L(0)\otimes c)\circ\Delta_{-1}(v)\\
  &=&(L(1)+L(0))\circ(\Id\otimes c)\circ\Delta_{-1}(v)\\
  &=&L(1)v+L(0)v.
  \end{eqnarray*}
The first and sixth identities follows from composition properties of linear maps. The third and fourth identities follows from the properties of linear map $c$ and Remark \ref{Rmk2.1}. The fifth identity follows from $L(1)$-derivation.
Hence, $L(1)v+L(0)v=0.$                           $\hfill\Box$

\begin{Lemma}\label{4.6}
Let $v\in V^{(0)}$. Then $v\in C_g(V)$, if and only if
\begin{equation*}
  \left\{ \begin{aligned}
           &\Res_z(\delta_0\otimes\delta_0)\circ(\frac{(1+z)^{L(0)}}{z^2}\otimes\Id)\circ\Yup(z)v=0, \\
           &\Res_z(\delta_r\otimes\delta_{-r})\circ(\frac{(1+z)^{L(0)-1+\frac{r}{T}}}{z}\otimes\Id)\circ\Yup(z)v=0,
                            \end{aligned} \right.
                            \end{equation*}
for all $r=1,2,...,T-1.$
\end{Lemma}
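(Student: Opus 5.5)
The plan is to decompose the defining element of $C_g(V)$ according to the eigenspace decomposition of $V\otimes V$ under $g\otimes g$. By Remark \ref{Rmk2.1}(ii), for $v\in V^{(0)}$ every $\Delta_k(v)$ lies in $\bigoplus_{s=0}^{T-1}V^{(T-s)}\otimes V^{(s)}$, with indices taken mod $T$. Hence, writing $\Id\otimes\Id=\sum_{r=0}^{T-1}\delta_r\otimes\Id$ on the left factor, we get
$$\Yup(z)v=\sum_{r=0}^{T-1}(\delta_r\otimes\delta_{-r})\circ\Yup(z)v,$$
with $\delta_{-r}$ meaning $\delta_{T-r}$. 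The projections $\delta_r\otimes\delta_{-r}$ onto the mutually complementary subspaces $V^{(r)}\otimes V^{(-r)}$ commute with $L(0)\otimes\Id$, with $\delta_0\otimes\Id$ and with $\ln g\otimes\Id$, since all of these act diagonally on the eigenspaces. Therefore they commute with the operator $\frac{(1+z)^{L(0)-1+\delta_0+\frac{\ln g}{2\pi i}}}{z^{1+\delta_0}}\otimes\Id$ and with $\Res_z$.

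First I evaluate the operator on each piece. On $V^{(0)}\otimes V^{(0)}$ we have $\delta_0=\Id$ and $\frac{\ln g}{2\pi i}=0$, so the operator becomes $\frac{(1+z)^{L(0)}}{z^2}\otimes\Id$. On $V^{(r)}\otimes V^{(-r)}$ with $1\le r\le T-1$ we have $\delta_0=0$ and $\frac{\ln g}{2\pi i}=\frac rT$, so it becomes $\frac{(1+z)^{L(0)-1+\frac rT}}{z}\otimes\Id$. This gives
$$\Res_z\Bigl(\tfrac{(1+z)^{L(0)-1+\delta_0+\frac{\ln g}{2\pi i}}}{z^{1+\delta_0}}\otimes\Id\Bigr)\Yup(z)v=X_0+\sum_{r=1}^{T-1}X_r,$$
where $X_0$ and $X_r$ are exactly the expressions in the two displayed conditions. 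Here one applies $\delta_r\otimes\delta_{-r}$ after the operator, which is legitimate by the commutation just noted.

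Next, $X_r\in V^{(r)}\otimes V^{(-r)}$ for each $r$, and these subspaces form a direct sum. So the total vanishes if and only if every $X_r$ vanishes. The ``if'' direction is immediate, and ``only if'' follows by applying $\delta_r\otimes\delta_{-r}$ to the vanishing total.

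The only delicate point is that $z^{\frac{\ln g}{2\pi i}}$ and $(1+z)^{L(0)+\cdots}$ are defined eigenvector-wise. One must check that the binomial expansion $\sum_j\binom{L(0)-1+\delta_0+\frac{\ln g}{2\pi i}}{j}z^{j-1-\delta_0}$ acts on each homogeneous eigencomponent by the scalar binomial coefficient. This holds because $g$ commutes with $L(0)$ (shown in the proof of Proposition \ref{Prop3.4}), so $V$ has a basis of simultaneous eigenvectors. With that observation the argument is bookkeeping.
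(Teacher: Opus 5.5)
Your proposal is correct and follows the same route as the paper. You decompose $\Yup(z)v$ along $\bigoplus_{r}V^{(r)}\otimes V^{(-r)}$, read off the specialized operator on each piece, and then use directness of the sum for the ``only if'' direction and $\sum_r\delta_r\otimes\delta_{-r}=\Id$ on that sum for the ``if'' direction. You also spell out two points the paper leaves implicit: that the projections commute with the operator, and that $g$ and $L(0)$ are simultaneously diagonalizable. Your $\delta_r\otimes\delta_{-r}$ is the correct form of the paper's typo $\sum_i\delta_i\otimes\delta_i$.
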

\proof Since $v\in V^{(0)}$, we have $\Yup(z)v\in\oplus_{r=0}^{T-1}V^{(r)}\otimes V^{(-r)}[[z,z^{-1}]].$ Then the "only if" part follows from this direct sum.

Conversely, the "if" part follows from $\sum_{i=0}^{T-1}\delta_i\otimes\delta_i=\Id$ on $\oplus_{r=0}^{T-1}V^{(v)}\otimes V^{(-r)}$.                        $\hfill\Box$

\begin{Definition}
For $v\in C_g(V)$, define
\begin{equation}\label{cop}
\Delta_g(v)=\Res_z(\frac{\delta_0\circ(1+z)^{L(0)}}{z}\otimes\Id)\circ\Yup(z)v.
\end{equation}
\end{Definition}

Recall the coassociative coalgebra $(C(V^g),\Delta,c)$ associated to graded vertex operator coalgebra $V^g$, see \cite{W}, which is defined as follows:
$$C(V^g)=\{v\in V^g | \Res_z(\frac{(1+z)^{L(0)}}{z^2}\otimes\Id)\circ\Yup_{V^g}(z)v=0\},$$ for any $v\in C(V^g)$,
$$\Delta(v)=\Res_z(\frac{(1+z)^{L(0)}}{z}\otimes\Id)\circ\Yup_{V^g}(z)v,~c(v)=c_{V^g}(v).$$

\begin{Proposition}\label{Prop4.1}
$\pi$ induces a linear map from $C_g(V)$ to $C(V^g)$, still denoted by $\pi: C_g(V)\rightarrow C(V^g).$ Furthermore, for any $v\in C_g(V)$, we have $$(\pi\otimes\pi)\circ\Delta_g(v)=\Delta\circ\pi(v),~c(v)=c_{V^g}\circ\pi(v).$$ In other words, if $(C_g(V).\Delta_g,c)$ is a coassociative coalgebra, then, $\pi$ induces a homomorphism of coassociative coalgebras, and $C_g(V)$ is a sub coassociative coalgebra of $C(V^g)$.
\end{Proposition}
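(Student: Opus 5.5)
We verify three things, working throughout with the intertwining relation $\Yup_{V^g}(z)\circ\pi=(\pi\otimes\pi)\circ\Yup(z)$ from Proposition \ref{Quo} and the facts $\pi\circ\delta_0=\pi$, $\pi\circ\delta_i=0$ for $i\neq0$ from Remark \ref{Rmk4.2}(iv). Since $\pi$ is degree preserving, the weight operator of $V^g$ satisfies $L(0)_{V^g}\circ\pi=\pi\circ L(0)$. For the same reason, $\pi$ commutes with $(1+z)^{L(0)}$ applied in the first tensor factor.

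\textbf{Step 1: $\pi(C_g(V))\subseteq C(V^g)$.} Take $v\in C_g(V)$. By Lemma \ref{Lm4.3}, $v\in V^{(0)}$, so Lemma \ref{4.6} applies. Its first equation reads
\[
\Res_z(\delta_0\otimes\delta_0)\circ\Big(\frac{(1+z)^{L(0)}}{z^2}\otimes\Id\Big)\circ\Yup(z)v=0 .
\]
Apply $\pi\otimes\pi$ to this. Because $\pi\circ\delta_0=\pi$, the projectors $\delta_0\otimes\delta_0$ disappear. Because $\pi$ commutes with $(1+z)^{L(0)}$, we can move $\pi\otimes\pi$ past the operator $\frac{(1+z)^{L(0)}}{z^2}\otimes\Id$ and use the intertwining relation. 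This gives
\[
\Res_z\Big(\frac{(1+z)^{L(0)}}{z^2}\otimes\Id\Big)\circ\Yup_{V^g}(z)\pi(v)=0,
\]
which says exactly that $\pi(v)\in C(V^g)$. Together with the injectivity already noted in (\ref{pi-C}), this identifies $C_g(V)$ with a subspace of $C(V^g)$.

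\textbf{Step 2: the coproduct identity.} Apply $\pi\otimes\pi$ to the definition (\ref{cop}) of $\Delta_g(v)$. The factor $\delta_0$ is absorbed by $\pi$, and $\pi$ commutes past $(1+z)^{L(0)}/z$. We obtain
\[
(\pi\otimes\pi)\Delta_g(v)=\Res_z\Big(\frac{(1+z)^{L(0)}}{z}\otimes\Id\Big)\circ\Yup_{V^g}(z)\pi(v)=\Delta(\pi(v)).
\]
Here the second component needs no projector: whatever lies in $V^{(r)}$ with $r\ne0$ is killed by $\pi$ anyway. By Remark \ref{Rmk2.1}(ii), a component in $V^{(0)}\otimes V^{(s)}$ arising from $v\in V^{(0)}$ has $s=0$, so nothing further is lost.

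\textbf{Step 3: the counit.} We have $c_{V^g}\circ\pi=c$ by construction in Proposition \ref{Quo}.

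\textbf{Final claim.} Suppose $(C_g(V),\Delta_g,c)$ is a coassociative coalgebra. Steps 2 and 3 say precisely that $\pi$ is a homomorphism of coalgebras. Since $\pi$ is injective, $\pi(C_g(V))$ is closed under $\Delta$: indeed $\Delta(\pi v)=(\pi\otimes\pi)\Delta_g(v)\in\pi(C_g(V))\otimes\pi(C_g(V))$. Hence $C_g(V)$ is a sub coalgebra of $C(V^g)$.

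The only step requiring care is justifying that $\pi$ commutes with the weight-dependent operator $(1+z)^{L(0)}$ and that the residue passes through $\pi\otimes\pi$. This holds because $\pi$ is graded and the sums involved are finite, by lower truncation of $\Delta_k$.
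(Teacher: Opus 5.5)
Your proof is correct and is essentially the paper's. Both apply $\pi\otimes\pi$ to the defining relation of $C_g(V)$ and to (\ref{cop}), using $\pi\circ\delta_0=\pi$, $\pi\circ\delta_r=0$ for $r\neq0$, $\Yup_{V^g}(z)\circ\pi=(\pi\otimes\pi)\circ\Yup(z)$ and $\pi\circ L(0)=L(0)\circ\pi$.

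The only difference is in Step 1. The paper does not pass through Lemmas \ref{Lm4.3} and \ref{4.6}; it notes directly that $\pi\otimes\pi$ kills every first-factor component outside $V^{(0)}$, and on $V^{(0)}$ the twisted operator already equals $(1+z)^{L(0)}/z^{2}$. That shortcut also spares you from justifying $v\in V^{(0)}$ for a general $v\in C_g(V)$. Lemma \ref{Lm4.3} as stated covers only eigenvectors, so your route additionally needs the easy fact that $C_g(V)$ is $g$-stable.
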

\proof Let $v\in C_g(V)$, i.e., $\Res_z(\frac{(1+z)^{L(0)-1+\delta_0+\frac{\ln g}{2\pi i}}}{z^{1+\delta_0}}\otimes \Id)\circ\Yup(z)v=0.$ Now we have
  \begin{align*}
    ~~&\Res_z(\frac{(1+z)^{L(0)}}{z^2}\otimes\Id)\circ\Yup_{V^g}(z)\pi(v) \\
    =~~&(\pi\otimes\pi)\circ\Res_z(\frac{(1+z)^{L(0)}}{z^2}\otimes\Id)\circ\Yup(v)\\
    =~~&(\pi\otimes\pi)\circ\Res_z(\frac{(1+z)^{L(0)-1+\delta_0+\frac{\ln g}{2\pi i}}}{z^{1+\delta_0}}\otimes \Id)\circ\Yup(z)v\\
    =~~&0.
  \end{align*}
The first identity follows since $\pi$ is homomorphism of graded vertex operator coalgebras, and $\pi$ commute with $L(0)$.
The second identity follows from the properties of projection $\pi$ which states $V^{(r)}\subseteq\ker\pi$ for all $r\neq0$. Hence $\im\pi\subseteq C(V^g)$.

Now, it is trivial $c(v)=c_{V^g}\circ\pi(v)$, and
  \begin{align*}
    ~~&\Delta\circ\pi(v)\\
    =~~&\Res_z(\frac{(1+z)^{L(0)}}{z}\otimes\Id)\circ\Yup_{V^g}(z)\pi(v) \\
    =~~&(\pi\otimes\pi)\circ\Res_z(\frac{\delta_0\circ(1+z)^{L(0)}}{z}\otimes\Id)\circ\Yup(v)\\
    =~~&(\pi\otimes\pi)\circ\Delta_g(v).
  \end{align*}
Hence $\pi$ is a homomorphism under the assumption $C_g(V)$ is a coassociative coalgebra.                         $\hfill\Box$

\begin{Lemma}\label{Lm4.7}
For $v\in C_g(V)$, we have $$\Res_z(\frac{(1+z)^{L(0)-1+\delta_0+\frac{\ln g}{2\pi i}+k}}{z^{1+\delta_0+l}}\otimes \Id)\circ\Yup(z)v=0,$$for $l\geq k\geq0.$
\end{Lemma}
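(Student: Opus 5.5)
By Lemma \ref{Lm4.3} every nonzero element of $C_g(V)$ has no component in $V^{(r)}$ with $r\neq0$, so it suffices to take $v\in C_g(V)\cap V^{(0)}$. By Remark \ref{Rmk2.1}(ii), $\Yup(z)v$ then lies in $\oplus_{r}V^{(r)}\otimes V^{(-r)}[[z,z^{-1}]]$. Exactly as in Lemma \ref{4.6}, I would split the claimed identity into its components. The $\delta_0\otimes\delta_0$ component reads
$$\Res_z\Big(\delta_0\circ\frac{(1+z)^{L(0)+k}}{z^{2+l}}\otimes\Id\Big)\circ\Yup(z)v=0.$$
For $1\le r\le T-1$, the $\delta_r\otimes\delta_{-r}$ component reads
$$\Res_z\Big(\delta_r\circ\frac{(1+z)^{L(0)-1+\frac rT+k}}{z^{1+l}}\otimes\Id\Big)\circ\Yup(z)v=0.$$
The hypothesis $v\in C_g(V)$ gives precisely the case $k=l=0$ of each component, by Lemma \ref{4.6}. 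So the task is to propagate the vanishing from $(0,0)$ to all $l\ge k\ge0$, separately for each fixed $r$. This is the twisted analogue of the untwisted result in \cite{W}, dual to the standard \cite{DLM2} lemma $\Res_z Y(a,z)b\,(1+z)^{\wt a-1+\delta_r+\frac rT+k}/z^{l+\delta_r+1}\in O_g(\mathcal V)$.

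\textbf{Step 1: reduce to $k=0$.} I will use the identity
$$\frac{(1+z)^{\alpha+k}}{z^{1+\epsilon+l}}=\sum_{j=0}^{k}\binom kj\frac{(1+z)^{\alpha}}{z^{1+\epsilon+l-j}},$$
where $\alpha$ is the relevant operator exponent. Since $l-j\ge l-k\ge0$, the case $(k,l)$ follows from the cases $(0,l')$ with $l'\ge0$. So it is enough to prove vanishing of
$$A_l:=\Res_z\Big(\delta_r\circ\frac{(1+z)^{\alpha}}{z^{1+\epsilon+l}}\otimes\Id\Big)\circ\Yup(z)v$$
for all $l\ge0$, by induction on $l$.

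\textbf{Step 2: the induction, which is the main obstacle.} The induction step is where the $L(1)$-derivation property (\ref{L(1D)3.6}) enters, just as $L(-1)$ does in the algebra case. Since $(L(1)\otimes\Id)\circ\Yup(z)=\frac{d}{dz}\Yup(z)$, I would integrate by parts:
$$\Res_z\Big(\frac{d}{dz}\frac{(1+z)^{\alpha}}{z^{1+\epsilon+l}}\otimes\Id\Big)\circ\Yup(z)v=-\Res_z\Big(\frac{(1+z)^{\alpha}}{z^{1+\epsilon+l}}L(1)\otimes\Id\Big)\circ\Yup(z)v.$$
Expanding the derivative gives a combination
$$\alpha\,\frac{(1+z)^{\alpha-1}}{z^{1+\epsilon+l}}-(1+\epsilon+l)\,\frac{(1+z)^{\alpha}}{z^{2+\epsilon+l}}.$$
On the right-hand side, $L(1)$ lowers weight by one, so it changes the exponent through $L(0)$ and produces $(1+z)^{\alpha-1}$-type terms, using $[L(0),L(1)]=-L(1)$ and the fact that $L(1)$ commutes with $g$ as in Proposition \ref{Prop3.4}. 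I would then rewrite $(1+z)^{\alpha-1}=(1+z)^{\alpha}-z(1+z)^{\alpha-1}$ repeatedly to bring every term to the shape $A_{l'}$ with $l'\le l$, plus one new term with coefficient $-(1+\epsilon+l)\neq0$ multiplying $A_{l+1}$. Solving for $A_{l+1}$ then completes the induction.

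The delicate point is the bookkeeping: the operator exponent $L(0)$ in $\alpha$ acts on the first tensor factor, and it does not commute with $L(1)$ in the naive way, so the weight shift must be tracked carefully. If the direct integration by parts becomes too tangled, the fallback is to dualize. I would pair with $u^*\otimes w^*\in V'\otimes V'$ and use Proposition \ref{VC-VA}(ii) together with Proposition \ref{Prop3.2}, which identify each component statement with the known \cite{DLM2} membership statement for $V'$ and $g^{-1}$. The vanishing of the pairings for every $u^*,w^*$ then gives the identity.
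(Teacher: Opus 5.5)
Your Step 1 and your overall plan (induction on $l$ via the $L(1)$-derivation property and integration by parts) match the paper. However, Step 2 as you set it up does not close.

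You integrate by parts against $\frac{(1+z)^{\alpha}}{z^{m}}$, with $\alpha=L(0)-1+\epsilon+\frac rT$ and $m=1+\epsilon+l$. Since $\frac{(1+z)^{\alpha}}{z^{m}}L(1)=L(1)\frac{(1+z)^{\alpha-1}}{z^{m}}$, the identity you actually obtain is
\[
m\,A_{l+1}=\bigl((\alpha+L(1))\otimes\Id\bigr)B_l,\qquad B_l=\Res_z\Bigl(\delta_r\circ\frac{(1+z)^{\alpha-1}}{z^{m}}\otimes\Id\Bigr)\circ\Yup(z)v .
\]
The induction hypothesis says nothing about $B_l$. The rewriting $(1+z)^{\alpha-1}=(1+z)^{\alpha}-z(1+z)^{\alpha-1}$ only gives $B_l=A_l-B_{l-1}$. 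Iterating it never terminates, and it pushes the pole order down to $\le 0$. You end up with residues $\Res_z(\delta_r\circ(1+z)^{\alpha}z^{j}\otimes\Id)\circ\Yup(z)v$ with $j\ge 0$, which do not vanish in general.

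The missing point is a shift of the exponent by one. Apply $L(1)\otimes\Id$ to the already vanishing $A_l$, i.e.\ integrate by parts against $\frac{(1+z)^{\alpha+1}}{z^{m}}$ instead. Since $L(1)(1+z)^{\alpha}=(1+z)^{\alpha+1}L(1)$ and $L(1)$ commutes with $\delta_r$,
\[
0=(L(1)\otimes\Id)A_l=-\Res_z\Bigl(\delta_r\circ\frac{d}{dz}\frac{(1+z)^{\alpha+1}}{z^{m}}\otimes\Id\Bigr)\circ\Yup(z)v=-\bigl((\alpha+1-m)\otimes\Id\bigr)A_l+m\,A_{l+1},
\]
because $\frac{d}{dz}\frac{(1+z)^{\alpha+1}}{z^{m}}=(\alpha+1-m)\frac{(1+z)^{\alpha}}{z^{m}}-m\frac{(1+z)^{\alpha}}{z^{m+1}}$. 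Hence $A_{l+1}=0$, since $m\ge 1$. This is exactly the paper's argument, carried out there without splitting into components.

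Your dual fallback is sound in outline, but it is the same computation moved to $V'$. There $L(-1)$ is the transpose of $L(1)$, and the Dong--Li--Mason induction passes from $u^*$ to $L(-1)u^*$, whose exponent is one higher --- precisely the shift above. To quote their lemma you need $V'$ to be a vertex operator algebra. That requires the hypotheses $V_0\nsubseteq\ker c$ and $V_2\nsubseteq\ker\rho$ of Proposition \ref{VC-VA}, which this lemma does not assume. You also need the easy half of the pairing characterization: $v\in C_g(V)$ annihilates $O_{g^{-1}}(V')$, using that $V'^{,(r)}$ for $g^{-1}$ pairs only with $V^{(r)}$ for $g$.

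Finally, Lemma \ref{Lm4.3} by itself does not give $C_g(V)\subseteq V^{(0)}$. You also need that $C_g(V)$ is $g$-stable: the defining operator intertwines $g$ and $g\otimes g$, so $\delta_r(C_g(V))\subseteq C_g(V)\cap V^{(r)}$. The reduction is unnecessary anyway, since every operator in the induction commutes with $\delta_r\otimes\Id$.
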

\proof Since $$\frac{(1+z)^{L(0)-1+\delta_0+\frac{\ln g}{2\pi i}+k}}{z^{1+\delta_0+l}}=\sum_{i\geq0}\tbinom{k}{i}\frac{(1+z)^{L(0)-1+\delta_0+\frac{\ln g}{2\pi i}}}{z^{1+\delta_0+l-i}}.$$ By linearity, it is enough to show the identity for $l\geq0.$

Using induction, it is true for $l=0$ by definition. Suppose $$\Res_z(\frac{(1+z)^{L(0)-1+\delta_0+\frac{\ln g}{2\pi i}}}{z^{1+\delta_0+l}}\otimes \Id)\circ\Yup(z)v=0,$$ now we have
    \begin{eqnarray*}
      0&=&(L(1)\otimes \Id)\circ\Res_z(\frac{(1+z)^{L(0)-1+\delta_0+\frac{\ln g}{2\pi i}}}{z^{1+\delta_0+l}}\otimes \Id)\circ\Yup (z)v\\
      &=&\Res_z(\frac{(1+z)^{L(0)+\delta_0+\frac{\ln g}{2\pi i}}}{z^{1+\delta_0+l}}\otimes \Id)\circ (L(1)\otimes \Id)\circ\Yup (z)v\\
      &=&\Res_z(\frac{(1+z)^{L(0)+\delta_0+\frac{\ln g}{2\pi i}}}{z^{1+\delta_0+l}}\otimes \Id)\circ\frac{d}{dz}\Yup (z)v\\
      &=&-\Res_z(\frac{d}{dz}\frac{(1+z)^{L(0)+\delta_0+\frac{\ln g}{2\pi i}}}{z^{1+\delta_0+l}}\otimes \Id)\circ\Yup (z)v\\
      &=&-\Res_z((L(0)+\delta_0+\frac{\ln g}{2\pi i})\otimes \Id)\circ(\frac{(1+z)^{L(0)+\delta_0+\frac{\ln g}{2\pi i}-1}}{z^{1+\delta_0+l}}\otimes \Id)\circ\Yup (z)v\\
      &&+\Res_z(1+\delta_0+l)(\frac{(1+z)^{L(0)+\delta_0+\frac{\ln g}{2\pi i}}}{z^{1+\delta_0+l+1}}\otimes \Id)\circ\Yup (z)v\\
      &=&-\Res_z((L(0)+\delta_0+\frac{\ln g}{2\pi i})\otimes \Id)\circ(\frac{(1+z)^{L(0)-1+\delta_0+\frac{\ln g}{2\pi i}}}{z^{1+\delta_0+l}}\otimes \Id)\circ\Yup (z)v\\
      &&+\Res_z(1+\delta_0+l)(\frac{(1+z)^{L(0)-1+\delta_0+\frac{\ln g}{2\pi i}}}{z^{1+\delta_0+l}}\otimes \Id)\circ\Yup (z)v\\
      &&+\Res_z(1+\delta_0+l)(\frac{(1+z)^{L(0)-1+\delta_0+\frac{\ln g}{2\pi i}}}{z^{1+\delta_0+l+1}}\otimes \Id)\circ\Yup (z)v.
    \end{eqnarray*}
The second identity follows since $L(1)\circ g=g\circ L(1)$ by Proposition \ref{Prop3.4}, and $L(1)$ is homogeneous of degree $-1$. In last identity, by assumption, the first and second terms are equal to $0$, this implies the last term is also $0.$ This completes the induction.                                             $\hfill\Box$

\begin{Lemma}\cite{W}\label{L(0)delta}
Let $V$ be a graded vertex operator coalgebra. Then, we have $$\Delta_k\circ L(0)=(L(0)\otimes L(0)-k-1)\circ\Delta_k.$$
\end{Lemma}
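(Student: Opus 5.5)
The identity says that the $L(0)$-grading on $V$, together with the stated degree of $\Delta_k$, forces the commutation relation. I would reduce it to homogeneous vectors and read off eigenvalues, using only Definition \ref{Def2.17}: $V=\oplus_{s\in\mathbb{Z}}V_s$, each $V_s$ is the $L(0)$-eigenspace with eigenvalue $s$, and $\Delta_k$ is homogeneous of degree $k+1$.

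By linearity it suffices to take $v\in V_s$. Then $\Delta_k\circ L(0)(v)=s\,\Delta_k(v)$.

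On the other side, homogeneity gives
\begin{equation*}
\Delta_k(v)\in (V\otimes V)_{s+k+1}=\oplus_{j\in\mathbb{Z}}V_{s+k+1-j}\otimes V_j .
\end{equation*}
Here I read the operator $L(0)\otimes L(0)$ in the statement as the induced action of $L(0)$ on the graded tensor product, namely $L(0)\otimes\Id+\Id\otimes L(0)$. This is the operator whose eigenvalue on $(V\otimes V)_t$ is $t$, which is the natural gradation fixed in Definition \ref{Def2.17}. Writing $\Delta_k(v)=\sum_j\sum_i a_{ij}\otimes b_{ij}$ with $a_{ij}\in V_{s+k+1-j}$ and $b_{ij}\in V_j$, each summand is multiplied by $(s+k+1-j)+j=s+k+1$. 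Hence
\begin{equation*}
(L(0)\otimes L(0)-k-1)\circ\Delta_k(v)=(s+k+1-k-1)\,\Delta_k(v)=s\,\Delta_k(v),
\end{equation*}
which matches the left side.

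There is an alternative route that avoids the grading axiom, in case one wants a derivation from the other axioms. Apply $(\rho\otimes\Id\otimes\Id)$ to the Jacobi identity (\ref{JI3.4}) and take suitable residues. This should give a commutator formula of the form $[\Yup(z_2),L(0)]$ expressed through $\Id\otimes L(\cdot)$ and $z\frac{d}{dz}$, using (\ref{L(1D)3.6}). However, since $V_s$ is assumed to be the $s$-eigenspace of $L(0)$ and $\Delta_k$ is assumed homogeneous, the direct eigenvalue computation is enough.

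The only real obstacle is notational: interpreting $L(0)\otimes L(0)$ as the degree operator on $V\otimes V$ rather than the literal tensor product of operators. The latter would give eigenvalue $(s+k+1-j)j$, which does not yield the identity. I would state this convention explicitly at the start of the proof.
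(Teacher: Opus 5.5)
Your proof is correct. For $v\in V_s$ both sides equal $s\,\Delta_k(v)$, and this uses nothing beyond the two grading axioms of Definition \ref{Def2.17}: that $V_s$ is the $L(0)$-eigenspace for $s$, and that $\Delta_k$ has degree $k+1$.

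The paper gives no argument of its own here. The lemma is simply quoted from \cite{W}, so there is no competing route to compare with.

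Your convention that $L(0)\otimes L(0)$ denotes the diagonal operator $L(0)\otimes\Id+\Id\otimes L(0)$ is exactly the one the paper relies on when it applies the lemma. In the proof of Theorem \ref{CM0}, for example, $(z_2^{L(0)-1-i}\otimes z_2^{L(0)}\otimes\Id)\circ(\Delta_{i-1}\otimes\Id)$ is rewritten as $(\Delta_{i-1}\otimes\Id)\circ(z_2^{L(0)-1}\otimes\Id)$. The proof of Lemma \ref{Lm4.9} commutes powers of $(1+z_2)$ past $\Delta_{i+j}$ in the same way. Both steps only make sense under the additive reading, and you were right to flag that the literal tensor product of operators would make the statement false.

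The Jacobi-identity route you sketch is the coalgebra dual of the usual $L(0)$-commutator formula for vertex operator algebras. It would only be needed if homogeneity of $\Delta_k$ were not an axiom, so you are right to treat it as optional.
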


\begin{Lemma}\label{Lm4.9}
For $v\in C_g(V)$, we have $\Delta_g(v)\in C_g(V)\otimes C_g(V)$, and $$(\Id\otimes \Delta_g)\circ \Delta_g(v)=(\Delta_g\otimes \Id)\circ \Delta_g(v).$$
\end{Lemma}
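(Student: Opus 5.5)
The two claims are dual to the facts in twisted Zhu theory that $O_g(\mathcal V)$ is a two-sided ideal for $*_g$ and that $*_g$ is associative modulo $O_g(\mathcal V)$. I would prove them directly on the coalgebra side, following the untwisted argument of \cite{W} for $C(V)$ but keeping track of the $\delta_r$ components.

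\textbf{Reduction.} By Lemma \ref{Lm4.3} and Lemma \ref{4.6}, $v\in V^{(0)}$. Moreover $\Delta_g(v)$ only involves the summand $V^{(0)}\otimes V^{(0)}$ of $\Yup(z)v$. So both statements are statements about the components $(\delta_0\otimes\delta_0)\Delta_k(v)$, and also about $(\delta_r\otimes\delta_{-r})\Delta_k(v)$ where the membership conditions are concerned.

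\textbf{Step 1: $\Delta_g(v)\in V\otimes C_g(V)$.} Fix the right tensor factor. I would apply $\Id\otimes\Res_{z_2}(\cdots)\circ(\Yup(z_2)\otimes\Id)$ to the left-hand leg. In place of the $V'$-side Jacobi identity I would use weak coassociativity from \cite{H2} in the form
\[
(\Yup(z_0)\otimes\Id)\circ\Yup(z_2)=(\Id\otimes\Yup(z_2))\circ\Yup(z_0+z_2)
\]
after multiplying by $(z_0+z_2)^q$. This rewrites
\[
\Res_{z_0}\frac{(1+z_0)^{L(0)-1+\delta_0+\frac{\ln g}{2\pi i}}}{z_0^{1+\delta_0}}
\]
applied to the second leg of $\Delta_g(v)$ as a combination of expressions
\[
\Res_z\frac{(1+z)^{L(0)-1+\delta_0+\frac{\ln g}{2\pi i}+k}}{z^{1+\delta_0+l}}\Yup(z)v \qquad\text{with } l\ge k\ge0 .
\]
It should mirror the identity $O_g(\mathcal V)*_g\mathcal V\subseteq O_g(\mathcal V)$ in \cite{DLM2}. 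Each such term vanishes by Lemma \ref{Lm4.7}. The weight bookkeeping, i.e. moving $(1+z)^{L(0)}$ through $\Yup$, uses Lemma \ref{L(0)delta}.

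\textbf{Step 2: $\Delta_g(v)\in C_g(V)\otimes V$.} This is the analogous computation on the left leg, dual to $\mathcal V*_gO_g(\mathcal V)\subseteq O_g(\mathcal V)$. It is less direct. I would first show that the ``opposite'' expression
\[
\Res_z\,\frac{(1+z)^{-1}\delta_0\circ(1+z)^{L(0)}}{z}\,\tau\Yup(z)v
\]
differs from $\Delta_g(v)$ by an element of the kernel. This is the coalgebra form of skew symmetry, $u*_gv-v*_gu\equiv\Res(1+z)^{\wt u-1}Y(u,z)v$. Skew symmetry for $\Yup$ would be obtained from weak cocommutativity together with the $L(1)$-derivation property (\ref{L(1D)3.6}), in the form $\tau\Yup(z)=e^{zL(1)}$-conjugate of $\Yup(-z)$, exactly as in \cite{W}. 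Then I would repeat Step 1. Intersecting Steps 1 and 2 gives $\Delta_g(v)\in C_g(V)\otimes C_g(V)$.

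\textbf{Step 3: coassociativity.} Expand
\[
(\Id\otimes\Delta_g)\Delta_g(v)=\Res_{z_1}\Res_{z_2}(\ldots)(\Id\otimes\Yup(z_2))\Yup(z_1)v,
\]
and use weak coassociativity, with the substitution $z_1=z_0+z_2$, to compare it with $(\Delta_g\otimes\Id)\Delta_g(v)$. The difference is a sum of terms of the form $\Res_z\frac{(1+z)^{L(0)+k}}{z^{2+l}}$ with $l\ge k$, applied in one tensor leg. These are killed by Lemma \ref{Lm4.7}, now used on the components of $\Delta_g(v)$ already known from Steps 1–2 to lie in $C_g(V)$. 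This is the dual of associativity of $*_g$ modulo $O_g$.

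\textbf{Main obstacle.} I expect the main difficulty to be the analytic justification in Steps 1 and 3. The factor $(z_0+z_2)^q$ from weak coassociativity has to be removed, and expansions of $(1+z_0+z_2)^{L(0)}$ have to be handled as genuine formal series. I would attack this by pairing with arbitrary $u^*\otimes w^*\in V'\otimes V'$ via Proposition \ref{VC-VA}. This transfers every identity to the known VOA computations in \cite{DLM2} for $A_{g^{-1}}(V')$: membership in $C_g(V)$ is then exactly orthogonality to $O_{g^{-1}}(V')$ in each finite-dimensional graded piece. The inclusions become $O*V'+V'*O\subseteq O$, and coassociativity becomes associativity of $*$ modulo $O$. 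This pairing route may well be the cleanest full proof.
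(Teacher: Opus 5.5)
\textbf{Your Steps 1--3 have a gap; your closing pairing route is a correct alternative proof.}

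\textbf{The gap.} Steps 1 and 3, and Step 2 since it ``repeats Step 1'', do not work as written. Weak coassociativity with the substitution $z_1=z_0+z_2$ cannot turn $\Res_{z_1}\Res_{z_2}$ of the product $(\Id\otimes\Yup(z_2))\circ\Yup(z_1)v$ into iterates. The coefficient $\frac{\delta_0\circ(1+z_1)^{L(0)}}{z_1}$ coming from $\Delta_g$ has a pole at $z_1=0$, so after multiplying by $(z_0+z_2)^q$ you cannot simply divide it back out. After pairing with $V'$, the residue of the product is the sum of two contributions:
\begin{itemize}
\item the pole at $z_1=z_2$, which is the iterate and is all that weak coassociativity sees;
\item the pole at $z_1=0$, which is the reversed product $(\tau\otimes\Id)\circ(\Id\otimes\Yup(z_1))\circ\Yup(z_2)v$.
\end{itemize}
The second term comes from the $\tau$-term of the Jacobi identity (\ref{JI3.4}), i.e.\ from cocommutativity. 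It does not come out of your substitution at all. So the obstacle you flag is a missing term, not analytic bookkeeping. The remedy is to use (\ref{JI3.4}) itself, which is an axiom here.

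This is what the paper does:
\begin{itemize}
\item \emph{Second leg.} The $\tau$-term contains $\Res_{z_2}(\frac{(1+z_2)^{L(0)-1+\delta_0+\frac{\ln g}{2\pi i}}}{z_2^{1+\delta_0}}\otimes\Id)\circ\Yup(z_2)v$, which vanishes by the very definition of $C_g(V)$. Only the iterate term becomes, via Lemma \ref{L(0)delta}, a combination of Lemma \ref{Lm4.7} expressions.
\item \emph{First leg.} No skew symmetry is needed. The paper splits into $\delta_0\otimes\delta_0$ and $\delta_r\otimes\delta_{-r}$ components by Lemma \ref{4.6}, rewrites the iterate as the two products via (\ref{JI3.4}), and kills each with Lemma \ref{Lm4.7}.
\item \emph{Coassociativity.} The same iterate-to-products rewriting, with the expansions of $(z_1-z_2)^{-1}$ and $(-z_2+z_1)^{-1}$, leaves surplus terms. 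These are Lemma \ref{Lm4.7} expressions for $v$ itself, not for the legs of $\Delta_g(v)$.
\end{itemize}

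Two minor points. Step 1 is dual to the left-ideal property $V'*_{g^{-1}}O_{g^{-1}}(V')\subseteq O_{g^{-1}}(V')$, not the right-ideal one, and you must apply $\Yup$ to the right leg. Also, $C_g(V)\subseteq V^{(0)}$ needs more than Lemma \ref{Lm4.3}: you need the defining map to intertwine $g$ with $g\otimes g$, so that $C_g(V)$ is $g$-stable.

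\textbf{The pairing route.} This route is sound and is a genuinely different proof from the paper's.
\begin{itemize}
\item The first lemma of the section on admissible $g$-twisted $V$-comodules shows that $v\in C_g(V)$ iff $(O_{g^{-1}}(V'),v)=0$. Its proof does not use the present lemma, so there is no circularity.
\item The same computation, using $(V'^{,(r)},V^{(s)})=0$ for $r\neq s$, gives $(\Delta_g(x),u^*\otimes w^*)=(x,u^**_{g^{-1}}w^*)$ for all $x\in V$. This is the duality used in Proposition \ref{Prop5.2A-C}.
\item Dong--Li--Mason's theorem says $O_{g^{-1}}(V')$ is a two-sided ideal and $*_{g^{-1}}$ is associative modulo it.
\item Both assertions then follow, because $V'\otimes V'$ and $V'\otimes V'\otimes V'$ separate finite tensors.
\end{itemize}

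What this buys is brevity: all the formal-variable work is outsourced to the vertex operator algebra side. What it costs is intrinsicness. It needs:
\begin{itemize}
\item the hypotheses of Proposition \ref{VC-VA} ($V_0\nsubseteq\ker c$, $V_2\nsubseteq\ker\rho$), so that $V'$ is a vertex operator algebra;
\item Proposition \ref{Prop3.2}, so that $g^{-1}\in\Aut V'$ has order $T$;
\item $\Delta_g(v)$ to be an honest finite tensor, which the paper also takes for granted.
\end{itemize}
The paper's argument instead works entirely on the coalgebra side from (\ref{JI3.4}), before any comparison with $A_{g^{-1}}(V')$.
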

\proof Since $C_g(V)$ is a subspace of $V$, we have $C_g(V)\otimes C_g(V)=V\otimes C_g(V)\bigcap C_g(V)\otimes V$.

First, we show $\Delta_g(v)\in V\otimes C_g(V)$ for any $v\in C_g(V)$. To prove this relation, using identity \ref{def}, it is enough to show
$$(\Id\otimes \Res_{z_2}\frac{(1+z_2)^{L(0)-1+\delta_0+\frac{\ln g}{2\pi i}}}{z_2^{1+\delta_0}}\otimes\Id)\circ (\Id\otimes\Yup (z_2))\circ\Delta_g(v)=0.$$Now, we have
  \begin{eqnarray*}
    &&(\Id\otimes \Res_{z_2}\frac{(1+z_2)^{L(0)-1+\delta_0 +\frac{\ln g}{2\pi i}}}{z_2^{1+\delta_0 }}\otimes\Id)\circ(\Id\otimes\Yup (z_2))\circ \Delta_g(v)\\
    &=&(\Res_{z_1}\frac{\delta_0\circ(1+z_1)^{L(0)}}{z_1}\otimes \Res_{z_2}\frac{(1+z_2)^{L(0)-1+\delta_0 +\frac{\ln g}{2\pi i}}}{z_2^{1+\delta_0 }}\otimes\Id)\circ(\Id\otimes\Yup (z_2))\circ \Yup (z_1)v\\
    &=&\Res_{z_0,z_1,z_2}(\frac{\delta_0\circ(1+z_1)^{L(0)}}{z_1}\otimes \frac{(1+z_2)^{L(0)-1+\delta_0 +\frac{\ln g}{2\pi i}}}{z_2^{1+\delta_0 }}\otimes\Id)\circ\\
    &&\{-z_0^{-1}\delta_0(\frac{z_2-z_1}{-z_0})(\tau\otimes\Id)\circ(\Id\otimes \Yup (z_1))\circ \Yup (z_2)v+z_1^{-1}\delta_0(\frac{z_2+z_0}{z_1})(\Yup (z_0)\otimes \Id)\circ \Yup (z_2)v\}\\
    &=& (\tau\otimes\Id_V)\circ\Res_{z_1,z_2}(\frac{(1+z_2)^{L(0)-1+\delta_0 +\frac{\ln g}{2\pi i}}}{z_2^{1+\delta _0 }}\otimes\frac{\delta_0\circ(1+z_1)^{L(0)}}{z_1}\otimes\Id)\circ(\Id\otimes \Yup (z_1))\circ \Yup (z_2)v\\
    && +\Res_{z_0,z_2}(\frac{\delta_0\circ(1+z_2+z_0)^{L(0)}}{z_2+z_0}\otimes\frac{(1+z_2)^{L(0)-1+\delta_0 +\frac{\ln g}{2\pi i}}}{z_2^{1+\delta_0 }}\otimes\Id)\circ (\Yup (z_0)\otimes \Id)\circ \Yup (z_2)v\\
    &=& (\tau\otimes\Id)\circ\Res_{z_1,z_2}(\Id\otimes\frac{\delta_0\circ(1+z_1)^{L(0)}}{z_1}\otimes\Id)\circ(\frac{(1+z_2)^{L(0)-1+\delta_0 +\frac{\ln g}{2\pi i}}}{z_2^{1+\delta_0 }}\otimes \Yup (z_1))\circ \Yup (z_2)v\\
    && +\Res_{z_0,z_2}\sum_{i\geq0,j\geq0,l}\tbinom{L(0)}{i}\tbinom{-1}{j}(\delta_0\circ(1+z_2)^{L(0)-i}\otimes\frac{(1+z_2)^{L(0)-1+\delta_0 +\frac{\ln g}{2\pi i}}}{z_2^{1+\delta_0 +1+j}}\otimes\Id)\circ \\
    &&(\Delta_l\otimes \Id)\circ \Yup (z_2)vz_0^{-l-1+i+j}
    \end{eqnarray*}
  \begin{eqnarray*}
    &=& (\tau\otimes\Id)\circ\Res_{z_1,z_2}(\Id\otimes\frac{\delta_0\circ(1+z_1)^{L(0)}}{z_1}\otimes\Id)\circ(\Id\otimes \Yup (z_1))\circ\\
    &&(\frac{(1+z_2)^{L(0)-1+\delta_0 +\frac{\ln g}{2\pi i}}}{z_2^{1+\delta_0 }}\otimes \Id)\circ \Yup (z_2)v\\
    && +\Res_{z_2}\sum_{i\geq0,j\geq0}\tbinom{L(0)}{i}\tbinom{-1}{j}(\delta_0\circ(1+z_2)^{L(0)-i}\otimes\frac{(1+z_2)^{L(0)-1+\delta_0 +\frac{\ln g}{2\pi i}}}{z_2^{1+\delta_0 +1+j}}\otimes\Id)\circ\\
    && (\Delta_{i+j}\otimes \Id)\circ \Yup (z_2)v\\
    &=&\delta_0\otimes\Id\otimes\Id\circ(\Res_{z_2}\sum_{i\geq0,j\geq0}\tbinom{L(0)}{i}\tbinom{-1}{j}(\Delta_{i+j}\otimes \Id))\circ(\frac{(1+z_2)^{L(0)+\delta_0 +\frac{\ln g}{2\pi i}+j}}{z_2^{1+\delta_0 +1+j}}\otimes\Id)\circ \Yup (z_2)v\\
    &=&0.
  \end{eqnarray*}
The first identity follows from definition of $\Delta_g$ and composition properties of linear maps. The second identity follows from Jacobi identity \ref{JI3.4}. Third identity follows from properties of $\tau$ and definition of $\Res$. The fifth identity follows from composition properties of linear maps and definition of $\Res$. The first term of the sixth identity disappears from the definition of $C_g(V)$, The second term follows from Lemma \ref{L(0)delta}. Last identity follows from Lemma \ref{Lm4.7}.

Now by definition of $\Delta_g$ and above proof, we can assume $\Delta_g(v)\in V^{(0)}\otimes C_g(V)$.

Second, we show $\Delta_g(v)\in C_g(V)\otimes V$ for any $v\in C_g(V)$. To prove this relation, using identity \ref{def}, it is enough to show $$(\Res_{z_1}\frac{(1+z_1)^{L(0)-1+\delta_0+\frac{\ln g}{2\pi i}}}{z_1^{1+\delta_0}}\otimes\Id \otimes\Id)\circ (\Yup (z_1)\otimes\Id)\circ\Delta_g(v)=0.$$ From Lemma \ref{4.6}, this is equivalent to prove
 \begin{equation*}
  \left\{ \begin{aligned}
           &\Res_z(\delta_0\otimes\delta_0\otimes\Id)\circ(\frac{(1+z)^{L(0)}}{z^2}\otimes\Id\otimes\Id)\circ(\Yup(z)\otimes\Id)\circ\Delta_{g}(v)=0, \\
           &\Res_z(\delta_r\otimes\delta_{-r}\otimes\Id)\circ(\frac{(1+z)^{L(0)-1+\frac{r}{T}}}{z}\otimes\Id\otimes\Id)\circ(\Yup(z)\otimes\Id)\circ\Delta_{g}(v)=0,
                            \end{aligned} \right.
                            \end{equation*}
for all $r=1,...,T-1.$

 The proof of first identity is similar to the untwisted case, we prove the second identity. Let $r\neq0$, for $v\in V^{(0)}\cap C_g(V)$, applying $\delta_r\otimes\delta_{-r}\otimes\Id$ to Lemma \ref{Lm4.7}, we have$$\Res_z(\delta_r\otimes\delta_{-r}\otimes\Id)\circ(\frac{(1+z)^{L(0)-1+\frac{r}{T}+k}}{z^{1+l}}\otimes \Id)\circ\Yup(z)v=0,$$for $l\geq k\geq0.$ Now we have
  \begin{align*}
    &\Res_z(\delta_r\otimes\delta_{-r}\otimes\Id)\circ(\frac{(1+z)^{L(0)-1+\frac{r}{T}}}{z}\otimes\Id\otimes\Id)\circ(\Yup(z)\otimes\Id)\circ\Delta_{g}(v)\\
    =&\Res_{z_2}(\delta_r\otimes\delta_{-r}\otimes\Id)\circ(\sum_{i\geq0}\tbinom{L(0)-1+\frac{r}{T}}{i}\otimes\Id\otimes\Id)\circ(\Delta_{i-1}\otimes\Id)\circ (\frac{(1+z_2)^{L(0)-1}}{z_2}\otimes\Id)\circ\Yup(z_2)v\\
    =&\Res_{z_2}(\delta_r\otimes\delta_{-r}\otimes\Id)\circ(\sum_{i\geq0}\tbinom{L(0)-1+\frac{r}{T}}{i}\otimes\Id\otimes\Id)\circ\\
    &~~~~((1+z_2)^{L(0)-i-1+\frac{r}{T}}\otimes\frac{(1+z_2)^{L(0)-\frac{r}{T}}}{z_2}\otimes\Id)\circ(\Delta_{i-1}\otimes\Id)\circ\Yup(z_2)v
        \end{align*}
    \begin{align*}
    =&\Res_{z_0,z_1,z_2}(\delta_r\otimes\delta_{-r}\otimes\Id)\circ(\sum_{i\geq0}\tbinom{L(0)-1+\frac{r}{T}}{i}(1+z_2)^{L(0)-i-1+\frac{r}{T}}z_0^{i-1}\otimes\frac{(1+z_2)^{L(0)-\frac{r}{T}}}{z_2}\otimes\Id)\circ\\
    &~~~~\{z_0^{-1}\delta(\frac{z_1-z_2}{z_0})(\Id\otimes \Yup (z_2))\circ \Yup (z_1)v-z_0^{-1}\delta(\frac{z_2-z_1}{-z_0})(\tau\otimes\Id)\circ(\Id\otimes \Yup (z_1))\circ \Yup (z_2)v\}  \\
    =&\Res_{z_1,z_2}(\delta_r\otimes\delta_{-r}\otimes\Id)\circ(\frac{(1+z_1)^{L(0)-1+\frac{r}{T}}}{z_1-z_2}\otimes\frac{(1+z_2)^{L(0)-\frac{r}{T}}}{z_2}\otimes\Id)\circ(\Id\otimes \Yup (z_2))\circ \Yup (z_1)v\\
    &-\Res_{z_1,z_2}(\tau\otimes\Id)\circ(\delta_{-r}\otimes\delta_{r}\otimes\Id)\circ(\frac{(1+z_2)^{L(0)-\frac{r}{T}}}{z_2}\otimes\frac{(1+z_1)^{L(0)-1+\frac{r}{T}}}{-z_2+z_1}\otimes\Id)\circ \\
    &~~~~(\Id\otimes \Yup (z_1))\circ \Yup (z_2)v  \\
    =&\Res_{z_1,z_2}(\delta_r\otimes\delta_{-r}\otimes\Id)\circ(\sum_{i\geq0}\tbinom{-1}{i}(-1)^i\Id\otimes\frac{(1+z_2)^{L(0)-\frac{r}{T}}}{z_2^{1-i}}\otimes\Id)\circ(\Id\otimes \Yup (z_2))\circ \\
    &~~~~~~(\frac{(1+z_1)^{L(0)-1+\frac{r}{T}}}{z_1^{1+i}}\otimes\Id)\circ\Yup (z_1)v\\
    &-(\tau\otimes\Id)\circ\Res_{z_1,z_2}(\delta_{-r}\otimes\delta_{r}\otimes\Id)\circ(\sum_{i\geq0}\tbinom{-1}{i}(-1)^{-1-i}\Id\otimes\frac{(1+z_1)^{L(0)-1+\frac{r}{T}}}{z_1^{-i}}\otimes\Id)\circ \\
    &~~~~(\Id\otimes \Yup (z_1))\circ (\frac{(1+z_2)^{L(0)-1-\frac{r}{T}+1}}{z_2^{1+i+1}}\otimes\Id)\circ\Yup (z_2)v  \\
    &=0.
  \end{align*}
Hence, $\Delta_g(v)\in C_g(V)\otimes V$ for any $v\in C_g(V)$.

From above two steps, we can see that $\Delta_g(v)\in C_g(V)\otimes C_g(V)$ for any $v\in C_g(V)$.

The proof of the coassociativity is similar.  Let $v\in C_g(V)$, we have
  \begin{eqnarray*}
    &&(\Delta_g\otimes \Id)\circ \Delta_g(v)=(\Res_{z_1}\frac{\delta_0\circ(1+z_1)^{L(0)}}{z_1}\otimes\Id\otimes\Id)\circ (\Yup (z_1)\otimes\Id)\circ\Delta_g(v)\\
    &=&\sum_{i\geq0}(\delta_0\circ\tbinom{L(0)}{i}\otimes\Id \otimes\Id)\circ (\Delta_{i-1}\otimes\Id)\circ\Res_{z_2}(\frac{\delta_0\circ(1+z_2)^{L(0)}}{z_2}\otimes \Id)\circ\Yup (z_2)v\\
    &=&(\delta_0\otimes\delta_0\otimes\Id)\circ\Res_{z_0,z_1,z_2}\sum_{i\geq0}(\tbinom{L(0)}{i}\otimes\Id \otimes\Id)\circ \\
     &&(\frac{(1+z_2)^{L(0)-i}}{z_2}\otimes (1+z_2)^{L(0)}\otimes \Id)\circ z_1^{-1}\delta(\frac{z_2+z_0}{z_1})(\Yup (z_0)\otimes\Id)\circ\Yup (z_2)vz_0^{i-1}\\
    &=&(\delta_0\otimes\delta_0\otimes\Id)\circ\{\Res_{z_1,z_2}(\frac{(1+z_1)^{L(0)}}{z_1-z_2}\otimes \frac{(1+z_2)^{L(0)}}{z_2}\otimes \Id)\circ(\Id\otimes \Yup (z_2))\circ \Yup (z_1)v\\
    &&+\Res_{z_1,z_2}(\frac{(1+z_1)^{L(0)}}{-z_2+z_1}\otimes \frac{(1+z_2)^{L(0)}}{z_2}\otimes \Id)\circ(\tau\otimes\Id)\circ(\Id\otimes \Yup (z_1))\circ \Yup (z_2)v\}\\
    &=&(\delta_0\otimes\delta_0\otimes\Id)\circ\{\Res_{z_1,z_2}\sum_{i\geq0}(\tbinom{-1}{i}(-1)^i\frac{(1+z_1)^{L(0)}}{z_1^{1+i}}\otimes \frac{(1+z_2)^{L(0)}}{z_2^{1-i}}\otimes \Id)\circ \\
    &&~~~~(\Id\otimes \Yup (z_2))\circ \Yup (z_1)v
                 \end{eqnarray*}
    \begin{eqnarray*}
    &&+\Res_{z_1,z_2}(\tau\otimes\Id_V)\circ\sum_{i\geq0}(\tbinom{-1}{i}(-1)^{-1-i}\frac{(1+z_2)^{L(0)}}{z_2^{2+i}}\otimes \frac{(1+z_1)^{L(0)}}{z_1^{-i}}\otimes \Id)\circ\\
    &&~~~~(\Id\otimes \Yup (z_1))\circ \Yup (z_2)v\}\\
    &=&(\delta_0\otimes\delta_0\otimes\Id)\circ\{\Res_{z_1,z_2}\sum_{i\geq0}(\tbinom{-1}{i}(-1)^i\Id\otimes \frac{(1+z_2)^{L(0)}}{z_2^{1-i}}\otimes \Id)\circ(\Id\otimes \Yup (z_2))\circ\\
    &&~~~~(\frac{(1+z_1)^{L(0)}}{z_1^{1+i}}\otimes\Id)\circ \Yup (z_1)v\\
    &&+\Res_{z_1,z_2}(\tau\otimes\Id)\circ\sum_{i\geq0}(\tbinom{-1}{i}(-1)^{-1-i}\Id\otimes \frac{(1+z_1)^{L(0)}}{z_1^{-i}}\otimes \Id)\circ\\
    &&~~~~(\Id\otimes \Yup (z_1))\circ(\frac{(1+z_2)^{L(0)}}{z_2^{2+i}}\otimes \Id)\circ \Yup (z_2)v\}\\
    &=&\Res_{z_1,z_2}(\Id\otimes \frac{\delta_0\circ(1+z_2)^{L(0)}}{z_2}\otimes \Id)\circ(\Id\otimes \Yup (z_2))\circ(\frac{\delta_0\circ(1+z_1)^{L(0)}}{z_1}\otimes \Id)\circ \Yup (z_1)v\\
    &=&(\Id\otimes \Res_{z_2}\frac{\delta_0\circ(1+z_2)^{L(0)}}{z_2}\otimes\Id)\circ (\Id\otimes\Yup (z_2))\circ\Delta_g(v)\\
    &=&(\Id\otimes \Delta_g)\circ \Delta_g(v).
  \end{eqnarray*}
The third identity follows from properties of linear map $\delta_0$ and composition properties of linear maps. The seventh identity follows from Lemma \ref{Lm4.7} and definitions of $\delta_0$, since in the sixth identity, all terms belong to $V^{(0)}\otimes V^{(0)}\otimes V\{z_1,z_2\}$. Others are trivial.

This completes the proof.                             $\hfill\Box$

\begin{Lemma}\label{Lm4.6}
For $v\in C_g(V)$, we have $$(\Id\otimes c)\circ\Delta_g(v)=(c\otimes\Id)\circ\Delta_g(v)=v.$$
\end{Lemma}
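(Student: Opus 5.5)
The plan is to compute the two counit expressions directly from the definition of $\Delta_g$ in (\ref{cop}), expanding
\[
\Res_z\frac{(1+z)^{L(0)}}{z}=\sum_{j\geq0}\tbinom{L(0)}{j}z^{j-1},
\]
so that
\[
\Delta_g(v)=\sum_{j\geq0}\Bigl(\delta_0\circ\tbinom{L(0)}{j}\otimes\Id\Bigr)\circ\Delta_{j-1}(v).
\]
Before doing this I will record that $C_g(V)\subseteq V^{(0)}$. Since $g$ commutes with $L(0)$ (proof of Proposition \ref{Prop3.4}), with $\delta_0$ and with $\ln g$, and since $\Yup(z)\circ g=(g\otimes g)\circ\Yup(z)$, the defining condition (\ref{def}) is $g$-stable. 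Hence $C_g(V)$ is a sum of its $g$-eigencomponents, and Lemma \ref{Lm4.3} kills every component with $r\neq0$. In particular $\delta_0 v=v$ for $v\in C_g(V)$.

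For the right counit, I apply $\Id\otimes c$. This commutes with the operator acting on the first tensor factor, so
\[
(\Id\otimes c)\circ\Delta_g(v)=\Res_z\frac{\delta_0\circ(1+z)^{L(0)}}{z}\circ(\Id\otimes c)\circ\Yup(z)v .
\]
By (\ref{CI3.2}) and (\ref{CI3.3}), $(\Id\otimes c)\circ\Yup(z)v\in V[[z]]$ with constant term $v$. Therefore only the $z^0$ coefficient survives against $z^{-1}(1+z)^{L(0)}$, and the right-hand side equals $\delta_0 v=v$. This is exactly the computation pattern already used in Lemma \ref{Lm4.4}.

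For the left counit, I apply $c\otimes\Id$. By Remark \ref{Rmk4.2}(iii), $c\circ\delta_0=c$, so
\[
(c\otimes\Id)\circ\Delta_g(v)=\sum_{j\geq0}\Bigl(c\circ\tbinom{L(0)}{j}\otimes\Id\Bigr)\circ\Delta_{j-1}(v).
\]
The key point is that $c$ vanishes on $V_w$ for $w\neq0$, so that $c\circ\tbinom{L(0)}{j}=\delta_{j,0}\,c$. I expect this step to be the main obstacle, since it is only implicit in the paper (Proposition \ref{VC-VA} uses $c|_{V_0}$ as the vacuum of $V'$). I would justify it from the grading together with (\ref{LU3.1}):
\begin{itemize}[label={}]
\item For homogeneous $v\in V_s$, we have $\Delta_k(v)\in\oplus_w V_w\otimes V_{s+k+1-w}$.
\item Axiom (\ref{LU3.1}) says $(c\otimes\Id)\circ\Delta_k(v)=\delta_{k,-1}v$ for all $k$.
\item Choose $k$ and a homogeneous $u\in V_w$ with $c(u)\neq0$. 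I would use the Jacobi identity (\ref{JI3.4}) with $c\otimes c\otimes\Id$ applied, together with weak coassociativity, to relate $\Delta_k$ and $\Delta_{-1}$, and then compare weights.
\end{itemize}
The cleanest way I would try first is to dualize the statement. By Proposition \ref{VC-VA}, $c$ restricted to $V'$ pairs with the vacuum, and $L(0)\textbf{1}=0$ in $V'$. Dually this means $c\circ L(0)=0$. Arguing this directly, I would apply $c\otimes\Id$ to the degree statement of Lemma \ref{L(0)delta} with $k=-1$ and use (\ref{LU3.1}) to get $(c\circ L(0)\otimes\Id)\circ\Delta_{-1}=0$. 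I would then upgrade this to $c\circ L(0)=0$ using the weight decomposition of $\Delta_{-1}(v)$.

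Granting $c\circ L(0)=0$, all terms with $j\geq1$ vanish. What remains is $(c\otimes\Id)\circ\Delta_{-1}(v)=v$, which completes the proof.
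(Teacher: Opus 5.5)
Your left-counit computation is the same as the paper's. You expand $\Delta_g(v)=\sum_{j\geq0}(\delta_0\circ\tbinom{L(0)}{j}\otimes\Id)\circ\Delta_{j-1}(v)$, use $c\circ\delta_0=c$, discard $j\geq1$ because $c$ is supported on $V_0$, and finish with (\ref{LU3.1}).

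For the right counit you take a different and more economical route. The paper passes to the quotient $V^g$. It needs $\Delta_g(v)\in C_g(V)\otimes C_g(V)$ from Lemma \ref{Lm4.9} so that $\pi^{-1}\otimes\pi^{-1}$ can be applied, and then uses Proposition \ref{Prop4.1} and the creation property in $V^g$. This reduces to the untwisted coalgebra $C(V^g)$, but it makes the counit depend on the hardest lemma of the section. You apply $\Id\otimes c$ directly in $V$ and need only $\delta_0v=v$, so your argument is independent of Lemma \ref{Lm4.9} and of $V^g$.

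Your $g$-stability argument for $C_g(V)\subseteq V^{(0)}$ is correct. It is also exactly what is needed to justify the paper's claim that $\pi|_{C_g(V)}$ is injective, since Lemma \ref{Lm4.3} by itself only handles pure eigenvectors.

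There is one small gap: the step $c\circ L(0)=0$, i.e.\ $c(V_w)=0$ for $w\neq0$, is not actually established. The paper also uses this without proof, calling it ``properties of $c$''.

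\begin{itemize}
\item Your dualization heuristic is circular. Proposition \ref{VC-VA} only uses $c|_{V_0}$ as the vacuum of $V'$, so $L(0)\textbf{1}_{V'}=0$ says nothing about $c$ off $V_0$.
\item Your direct derivation of $(c\circ L(0)\otimes\Id)\circ\Delta_{-1}=0$ is fine. Note that it needs the grading form $\Delta_{-1}\circ L(0)=(L(0)\otimes\Id+\Id\otimes L(0))\circ\Delta_{-1}$; Lemma \ref{L(0)delta} as printed, with $L(0)\otimes L(0)$, is a typo.
\item The proposed upgrade via the weight decomposition of $\Delta_{-1}(v)$ does not work. It only rewrites the identity as $(c_w\otimes\Id)\circ\Delta_{-1}=0$ for each $w\neq0$, where $c_w$ is $c$ composed with the projection onto $V_w$. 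That alone does not force $c_w=0$.
\end{itemize}

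The missing ingredient is the creation property, which you already use for the right counit. By (\ref{CI3.2})--(\ref{CI3.3}) one has $(\Id\otimes c)\circ\Delta_{-1}=\Id$, hence
\[
c(L(0)v)=(c\circ L(0)\otimes c)\circ\Delta_{-1}(v)=c\bigl((c\circ L(0)\otimes\Id)\circ\Delta_{-1}(v)\bigr)=0 .
\]
With this line added, your proof is complete. The Jacobi-identity and weak-coassociativity plan is unnecessary and can be dropped.
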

\proof First, from identity \ref{cop}, for homogeneous $v$ of weight $t$, we have
\begin{eqnarray*}
&&(c\otimes\Id)\circ\Delta_g(v)\\
&=&\Res_z(c\otimes \Id)\circ(\frac{\delta_0\circ(1+z)^{L(0)}}{z}\otimes\Id)\circ\Yup(z)v\\
&=&(c\circ\delta_0\otimes \Id)\circ\Res_z\frac{(1+z)^{L(0)}}{z}\otimes \Id\circ\Yup (z)v\\
&=&(c\otimes \Id)\circ\Res_z\frac{(1+z)^{L(0)}}{z}\otimes \Id(\sum_{k\in\mathbb{Z}}\sum\sum_{s\in\mathbb{Z}}v'_{t+k+1-s}\otimes v''_{s}z^{-k-1})\\
&=&(c\otimes \Id)\circ\Res_z\sum_{k\in\mathbb{Z}}\sum\sum_{s\in\mathbb{Z}}\frac{(1+z)^{t+k+1-s}}{z}v'_{t+k+1-s}\otimes v''_{s}z^{-k-1}\\
&=&\Res_z\sum_{k\in\mathbb{Z}}\sum\sum_{s\in\mathbb{Z}}\frac{(1+z)^{t+k+1-s}}{z}c(v'_{t+k+1-s})\otimes v''_{s}z^{-k-1}\\
&=&\Res_z\sum\frac{1}{z}1\otimes v=v.
\end{eqnarray*}
The third identity follows since $c\circ\delta_0=c,$ and $\Delta_k$ is homogeneous of degree $k+1.$ The fifth identity follows from properties of $c$.

Second, from identity \ref{cop}, using the injection $\pi:C_g(V)\rightarrow V^g$, we have
\begin{eqnarray*}
(\Id\otimes c)\circ\Delta_g(v)&=&(\Id\otimes c)\circ(\pi^{-1}\otimes \pi^{-1})\circ(\pi\otimes\pi)\circ\Delta_g(v)\\
&=&\pi^{-1}\circ\Res_z(\Id\otimes c)\circ(\frac{(1+z)^{L(0)}}{z}\otimes\Id)\circ\Yup_{V^g}(z)v\\
&=&\pi^{-1}\circ\Res_z(\frac{(1+z)^{L(0)}}{z}\otimes\Id)\circ(\Id\otimes c)\circ\Yup_{V^g}(z)v\\
&=&\pi^{-1}\circ \pi(v)=v.
\end{eqnarray*}
The first identity follows from $\Delta_g(v)\in C_g(V)\otimes C_g(V)$. The second identity follows since $c\circ\pi=c$. The fourth identity follows from properties of $c$ and $V^g$ is a graded vertex operator algebra. Here $\pi^{-1}$ should be considered as $\pi^{-1}|_{\im\pi}.$

This completes the proof.                                                                  $\hfill\Box$

\begin{Theorem}\label{Thm4.3}
Suppose $\dim C_g(V)<\infty$, then $(C_g(V),\Delta_g,c)$ is a coassociative coalgebra, where $c=c|_{C_g(V)}$, which is still denoted by $c$.
\end{Theorem}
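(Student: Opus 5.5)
The theorem follows by assembling the two lemmas just proved and checking them against Definition \ref{DC2.1}. We need three things for the triple $(C_g(V),\Delta_g,c)$: that $\Delta_g$ really maps $C_g(V)$ into $C_g(V)\otimes C_g(V)$, that the counit identities hold, and that coassociativity holds. The finiteness hypothesis $\dim C_g(V)<\infty$ is used to make sense of the tensor identifications. With it, $C_g(V)\otimes C_g(V)$ is a genuine subspace of $V\otimes V$, cut out as $V\otimes C_g(V)\cap C_g(V)\otimes V$, which is the description used in Lemma \ref{Lm4.9}. It also ensures that $\Delta_g(v)$, a finite sum once residues are taken, lies in the algebraic tensor product. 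It further makes the restriction of $\pi$ in (\ref{pi-C}) an injection of finite-dimensional spaces, so the inverse $\pi^{-1}|_{\im\pi}$ used in Lemma \ref{Lm4.6} is harmless.

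First, I will check well-definedness. For $v\in C_g(V)$ homogeneous, only finitely many $\Delta_k(v)$ contribute to $\Res_z$ of $\frac{\delta_0\circ(1+z)^{L(0)}}{z}$ applied to $\Yup(z)v$. This holds because $\Delta_k(v)=0$ for $k\ll0$, and the grading forces $\Delta_k(v)\in(V\otimes V)_{\wt v+k+1}$ with $V_s=0$ for $s\ll 0$, so the expansion of $(1+z)^{L(0)}$ truncates. Lemma \ref{Lm4.9} then gives $\Delta_g(C_g(V))\subseteq C_g(V)\otimes C_g(V)$, so $\Delta_g:C_g(V)\to C_g(V)\otimes C_g(V)$ is a linear map.

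Second, the counit $c|_{C_g(V)}$ satisfies $(\Id\otimes c)\circ\Delta_g=(c\otimes\Id)\circ\Delta_g=\Id$ by Lemma \ref{Lm4.6}, which covers condition (i) of Definition \ref{DC2.1}. Third, coassociativity $(\Id\otimes\Delta_g)\circ\Delta_g=(\Delta_g\otimes\Id)\circ\Delta_g$ is the second half of Lemma \ref{Lm4.9}, which covers condition (ii). Here the first step is what makes both sides meaningful, since $\Delta_g$ must be applied to the tensor factors of $\Delta_g(v)$ and these must lie in $C_g(V)$.

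The main obstacle is the bookkeeping in the first step. Lemma \ref{Lm4.9} shows the two containments in $V\otimes C_g(V)$ and $C_g(V)\otimes V$ separately, and we must conclude membership in $C_g(V)\otimes C_g(V)$. I would handle this by choosing a basis of $C_g(V)$, extended by a complement to a basis of $V$, so that the intersection identity is transparent. Everything else is a direct citation. Finally, I will note that by Proposition \ref{Prop4.1} the map $\pi$ then becomes a coalgebra homomorphism into $C(V^g)$.
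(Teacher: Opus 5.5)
Your skeleton is the paper's proof. Lemma \ref{Lm4.9} gives $\Delta_g(C_g(V))\subseteq C_g(V)\otimes C_g(V)$ and coassociativity, and Lemma \ref{Lm4.6} gives the counit identities. Two of your supporting remarks are wrong, though, and the first is a step that would fail if you relied on it.

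Your grading argument for the finiteness of $\Delta_g(v)$ does not work. Taking the residue gives $\Delta_g(v)=\sum_{k\ge -1}(\delta_0\tbinom{L(0)}{k+1}\otimes\Id)\Delta_k(v)$. So the question is whether the terms with $k\gg 0$ vanish; the condition $\Delta_k(v)=0$ for $k\ll 0$ is beside the point. For $v$ of weight $s$, the condition $V_t=0$ for $t\ll 0$ bounds only the weight $b$ of the second tensor factor. Any component of $\Delta_k(v)$ in $V_a\otimes V_b$ with $b\le s$ has $a=s+k+1-b\ge k+1$, so $\tbinom{a}{k+1}\neq 0$, and the binomial factor does not kill these terms for any $k$.

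Your argument never uses $v\in C_g(V)$, so it would apply to every homogeneous $v\in V$, and there the conclusion is false. Take $g=\Id$, $V=\mathcal V'$ with $\mathcal V$ the rank-one Heisenberg vertex operator algebra, and $v=(h(-1)\1)^*$. Pair the residue with $u\otimes w$, where $u=h(-a+1)h(-1)\1$ and $w=h(-1)\1$. This gives $(v,u_{a-1}w)=(-1)^a(a-1)\neq 0$ for every $a\ge 3$, so the residue has nonzero components in $V_a\otimes V_1$ for infinitely many $a$.

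So $\Delta_g(v)\in V\otimes V$ can only come from $v\in C_g(V)$. It is part of what Lemma \ref{Lm4.9} asserts, although that lemma's proof takes it for granted. Let your citation of the lemma carry this point, exactly as the paper does, and drop the grading argument.

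Second, none of the roles you assign to $\dim C_g(V)<\infty$ is real:
\begin{itemize}
\item $(A\otimes V)\cap(V\otimes A)=A\otimes A$ holds for any subspace $A\subseteq V$; your own basis-plus-complement argument proves this in any dimension.
\item An injective linear map has an inverse on its image regardless of dimension.
\end{itemize}
The paper's proof of this theorem never uses the hypothesis. It only becomes relevant in Sections 5 and 6, when $C_g(V)$-comodules are traded for modules over $C_g(V)^*$ and $C_g(V)$ is compared with $A_{g^{-1}}(V')$.
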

\proof From Lemma \ref{Lm4.9}, we know $\Delta_g$ is well-defined and satisfies coassociativity. From Lemma \ref{Lm4.6}, we know $c$ satisfies the counit identities. Thus $(C_g(V),\Delta_g,c)$ is a coassociative coalgebra.                                                      $\hfill\Box$

\begin{Remark}
From Proposition \ref{Prop4.1}, we know $C_g(V)$ is a subspace of coassociative coalgebra $C(V^g)$. Hence to prove $C_g(V)$ is a coassociative coalgebra, it is enough to prove $\Delta_g(v)\in C_g(V)\otimes C_g(V)$ for all $v\in C_g(V).$ This will simplify the proof of Theorem \ref{Thm4.3} a lot.
\end{Remark}

\section{$C_g(V)$-comodules}
Let $(V,\Yup(z),c,\rho)$ be a graded vertex operator coalgebra and $g\in\Aut V$ a finite order automorphism of order $T$. Let $(\mathcal {M},\Yup_\mathcal {M}(z))$ be an admissible $g$-twisted $V$-comodule with $M_0\neq0$. Suppose $\dim C_g(V)<\infty$.

\begin{Lemma}\label{Lm5.1}
Let $m\in M_0$ and $i\in\mathbb{Q}$. If $i<-1$, we have $$\Res_z(z^{L(0)+i}\otimes \Id)\circ \Yup_\mathcal {M}(z)m=0.$$
\end{Lemma}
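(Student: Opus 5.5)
This is a grading argument: the degree condition on $\Delta_{\mathcal {M},k}$ forces the $\mathcal {M}$-component of every term in the residue to lie in a negative-degree piece of $\mathcal {M}$, which is zero. I would expand the residue termwise, reduce to homogeneous $v\otimes m''$, and read off the degree of $m''$.

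Since $\Yup_\mathcal {M}(z)m=\sum_k (z^{-\frac{\ln g}{2\pi i}}\otimes\Id)\circ\Delta_{\mathcal {M},k}(m)z^{-k-1}$, I would write $\Delta_{\mathcal {M},k}(m)$ as a finite sum of $v\otimes m''$. Here $v\in V^{(r)}$ is homogeneous of weight $\wt v$, and $m''\in M_s$ is homogeneous. The admissibility condition with $t=0$ then reads
\[
\wt v+s=k+1+\tfrac{r}{T},
\]
which means $s=k+1+\frac{r}{T}-\wt v$.

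Applying $z^{L(0)+i}\otimes\Id$ to the term $(z^{-\frac{r}{T}}v\otimes m'')z^{-k-1}$ gives $z^{\wt v+i-\frac rT-k-1}\,v\otimes m''$. Taking $\Res_z$, this term survives only when $\wt v+i-\frac rT-k-1=-1$, that is, $\wt v-\frac rT-k-1=-1-i$. Substituting into the expression for $s$ gives
\[
s=-(\wt v-\tfrac rT-k-1)=1+i.
\]
Because $i<-1$, we get $s<0$. But $\mathcal {M}=\bigoplus_{t\in\frac1T\mathbb{N}}M_t$ has no negative-degree components, so $m''=0$. Every surviving term therefore vanishes, and the residue is $0$.

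The main obstacle is sign and convention bookkeeping: checking that the $z^{-\frac{\ln g}{2\pi i}}$ factor shifts the exponent by exactly $-\frac rT$, consistently with the $+\frac rT$ in the admissibility grading. The argument also relies on that grading being the one in the definition of admissible $g$-twisted comodules, with $(V^{(r)}\otimes\mathcal {M})$ graded by total weight. If the signs were reversed in the paper's convention, I would redo the same computation. The conclusion $s=1+i$, or $s=1+i$ up to the fixed shift, should persist, because the $\frac rT$ contributions cancel once both are accounted for.
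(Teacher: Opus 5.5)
Your proof is correct and is essentially the paper's argument. The paper expands $\Delta_{\mathcal M,k}(m)$ into homogeneous pieces $v'\otimes m''$ with $v'\in V^{(r)}$ and $m''$ of degree $k+1+\frac rT-\wt v'\geq 0$, notes that the resulting power of $z$ is $i-(k+1+\frac rT-\wt v')\leq i<-1$, and concludes that there is no $z^{-1}$ term; this is your computation $s=1+i<0$ read in the other direction. Your bookkeeping for the $z^{-\frac{\ln g}{2\pi i}}$ factor matches the paper's conventions, so the hedge in your last paragraph is unnecessary.
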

\proof By assumption, we have
  \begin{eqnarray*}
    &&\Res_z(z^{L(0)+i}\otimes \Id)\circ \Yup_\mathcal {M}(z)m\\
    &=&\Res_z(z^{L(0)+i}\otimes \Id)\circ\sum_{k\in\mathbb{Z}}(z^{\frac{-\ln g}{2\pi i}}\otimes\Id)\circ\Delta_{\mathcal {M},k}(m)z^{-k-1}\\
    &=&\Res_z\sum_{k\in\mathbb{Z}}\sum\sum_{s\in\mathbb{\mathbb{Z}}}v'^{,r}_{s}\otimes m''^{,r}_{k+1+\frac{r}{T}-s}z^{-k-1-\frac{r}{T}+s+i}\\
    &=&0,
  \end{eqnarray*}
where $\Delta_{\mathcal {M},k}(v)=\sum v'^{,r}_{s}\otimes m''^{,r}_{k+1+\frac{r}{T}-s}$ with $v_s'^{,r}\in V_s^{(r)}$. Last identity follows since $k+1+\frac{r}{T}-s\geq0$.                      $\hfill\Box$

\begin{Theorem}\label{CM0}
For any $m\in M_0$, define $$\Delta_{M_0}(m)=\Res_z(z^{L(0)-1}\otimes \Id)\circ \Yup_\mathcal {M}(z)m.$$Then $\Delta_{M_0}(m)\in C_g(V)\otimes M_0$, and $(M_0,\Delta_{M_0})$ gives a $C_g(V)$-comodule structure on $M_0.$
\end{Theorem}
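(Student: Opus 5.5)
The proof splits into three checks, in this order: $\Delta_{M_0}(m)$ lands in $V\otimes M_0$; its first tensor factor lies in $C_g(V)$; and the counit and coassociativity identities hold. Throughout, $z^{L(0)-1}$ acts on the $V$-factor of $\Yup_\mathcal{M}(z)m$, which already carries the twist $z^{-\ln g/2\pi i}$.

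\textbf{Landing in $V\otimes M_0$.} Write $\Delta_{\mathcal M,k}(m)=\sum v'^{,r}_s\otimes m''^{,r}_{k+1+\frac rT-s}$ as in the proof of Lemma \ref{Lm5.1}. The coefficient of $z^{-1}$ in $(z^{L(0)-1}\otimes\Id)\circ\Yup_\mathcal M(z)m$ only collects the terms with $k+1+\frac rT-s=0$, so the $\mathcal M$-factor has degree $0$. Hence $\Delta_{M_0}(m)\in V\otimes M_0$. The same bookkeeping shows which $(r,s,k)$ contribute, and it gives the counit property directly: applying $c\otimes\Id$ kills every $r\neq 0$ (Remark \ref{Rmk4.2}(iii)), and for $r=0$ it uses $(c\otimes\Id)\circ\Yup_\mathcal M(z)=\Id$ together with $c|_{V_s}=0$ for $s\neq0$. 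This leaves exactly $m$.

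\textbf{The first factor lies in $C_g(V)$.} I must show
$$\Res_{z_0}\Bigl(\frac{(1+z_0)^{L(0)-1+\delta_0+\frac{\ln g}{2\pi i}}}{z_0^{1+\delta_0}}\otimes\Id\otimes\Id\Bigr)\circ(\Yup(z_0)\otimes\Id)\circ\Delta_{M_0}(m)=0.$$
I will evaluate $(\Yup(z_0)\otimes\Id)\circ\Yup_\mathcal M(z_2)m$ through the twisted Jacobi identity \eqref{JIM3.4}, multiplying by suitable powers and taking $\Res_{z_1}$ in the spirit of Lemma \ref{Lm4.9}. The aim is to rewrite the expression as a combination of $(\Id\otimes\Yup_\mathcal M(z_2))\circ\Yup_\mathcal M(z_1)m$ and its $\tau$-twisted counterpart. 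The factor $(z_1-z_0)/z_2$ raised to $-\ln g/2\pi i$ should be absorbed by $(1+z_0)^{\ln g/2\pi i}$ after substituting $z_1=z_2(1+z_0)$, which in formal terms means pairing $z_2^{L(0)}$ with $(1+z_0)^{L(0)}$. The parameter $\delta_0$ separates the untwisted component, which carries the extra $z_0^{-1}$ and $(1+z_0)$, from the twisted components $r\neq0$; for these I will use the splitting of Lemma \ref{4.6}. Once the expression is rewritten, the resulting terms become residues of the form $\Res_z(z^{L(0)+i}\otimes\Id)\circ\Yup_\mathcal M(z)m$ with $i<-1$, and these vanish by Lemma \ref{Lm5.1} because $m\in M_0$. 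If this works, a grading argument upgrades the statement to $\Delta_{M_0}(m)\in C_g(V)\otimes M_0$.

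\textbf{Coassociativity.} I need
$$(\Delta_g\otimes\Id)\circ\Delta_{M_0}=(\Id\otimes\Delta_{M_0})\circ\Delta_{M_0}.$$
I will expand the left-hand side with $\Delta_g$ from \eqref{cop} and apply the Jacobi identity again, exactly as in the coassociativity part of Lemma \ref{Lm4.9}. Since the first factor of $\Delta_{M_0}(m)$ already lies in $C_g(V)$, only the $\delta_0$ part survives, and the $(1+z)^{L(0)}$ factors recombine into $z_1^{L(0)-1}z_2^{L(0)-1}$. The leftover terms again vanish by Lemma \ref{Lm5.1}.

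\textbf{Main obstacle.} The hardest step will be the twisted components $r\neq 0$ in the $C_g(V)$-membership check. There the fractional powers $z^{\pm r/T}$ coming from the twist, from $\ln g$ in the definition of $C_g(V)$, and from $\Yup_\mathcal M$ must cancel exactly. I would first verify that cancellation on a single homogeneous $\delta_r\otimes\delta_{-r}$ component before assembling the general argument.
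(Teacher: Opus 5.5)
Your outline is correct. On the one genuinely new point, the components $\delta_r\otimes\delta_{-r}$ with $r\neq0$ of the membership condition, it is the paper's argument: split by Lemma \ref{4.6}, apply the twisted Jacobi identity \eqref{JIM3.4}, and kill both left-hand terms with Lemma \ref{Lm5.1}.

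The paper does not verify anything else by hand. It uses Proposition \ref{Prop3.9} to regard $\mathcal{M}$ as an untwisted admissible $V^g$-comodule, and quotes \cite{W} for the fact that $M_0$ is a $C(V^g)$-comodule under the same formula. It then pulls the $\delta_0\otimes\delta_0$ condition back along the bijection $\pi|_{V^{(0)}}$. Counit and coassociativity come for free, because $\pi$ identifies $(C_g(V),\Delta_g,c)$ with a subcoalgebra of $C(V^g)$ (Proposition \ref{Prop4.1}). That route is shorter and isolates the only new computation. Yours does the $\delta_0\otimes\delta_0$ case, the counit and coassociativity directly. It is self-contained, treats all components uniformly, avoids \cite{W} and the $V^g$-reduction, and it does go through.

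You left the crux tentative, so here are multipliers that work.

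\emph{Case $r\neq0$.} Apply $\Res_{z_0,z_1,z_2}(\delta_r\otimes\delta_{-r}\otimes\Id)\circ(z_0^{-1}z_1^{L(0)-1+\frac{r}{T}}\otimes z_2^{L(0)-\frac{r}{T}}\otimes\Id)$ to \eqref{JIM3.4}.
\begin{itemize}
\item The first left-hand term leaves $\Res_{z_1}z_1^{L(0)-2-j+\frac{r}{T}}$ on $\Yup_\mathcal{M}(z_1)m$, for $j\geq0$.
\item The $\tau$-term leaves $\Res_{z_2}z_2^{L(0)-1-j-\frac{r}{T}}$ on $\Yup_\mathcal{M}(z_2)m$.
\item Both vanish by Lemma \ref{Lm5.1}; the second one only because $r\neq0$.
\item On the right, after $\Res_{z_1}$, the factor $z_2^{-1}\delta(\frac{z_1-z_0}{z_2})(\frac{z_1-z_0}{z_2})^{-r/T}z_1^{L(0)-1+r/T}$ becomes $(z_2+z_0)^{L(0)-1+r/T}$. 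So the twist factor is not cancelled by $(1+z)^{\frac{\ln g}{2\pi i}}$, as you suggest; it is its source.
\item Homogeneity of $\Delta_{i-1}$ then collapses the $z_2$-powers to $z_2^{L(0)-1}$. This gives exactly the required expression applied to $\Delta_{M_0}(m)$.
\end{itemize}

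\emph{Case $r=0$.} Use $z_0^{-2}z_1^{L(0)}\otimes z_2^{L(0)}$. Both left-hand exponents become $-2-j$, so both terms vanish.

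\emph{Coassociativity.} Use $\delta_0$ with $z_0^{-1}z_1^{L(0)}\otimes z_2^{L(0)-1}$. The $\tau$-term dies. Of $(z_1-z_2)^{-1}$ only the $j=0$ term survives, and it is literally $(\Id\otimes\Delta_{M_0})\circ\Delta_{M_0}(m)$.

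Two small points. State explicitly that your degree bookkeeping forces $r=0$, i.e.\ $\Delta_{M_0}(m)\in V^{(0)}\otimes M_0$, since Lemma \ref{4.6} requires $v\in V^{(0)}$. Also, the final upgrade to $C_g(V)\otimes M_0$ is just $\ker(P\otimes\Id_{M_0})=\ker P\otimes M_0$ for the linear map $P$ cutting out $C_g(V)$; no grading argument is needed.
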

\proof Since $(\mathcal {M},\Yup_\mathcal {M}(z))$ is an admissible $g$-twisted $V$-comodule, it is also an admissible $V^g$-comodule by Proposition \ref{Prop3.9}. From \cite{W}, we know $M_0$ is a $C(V^g)$-comodule with $$\Delta_{M_0}(m)=\Res_z(z^{L(0)-1}\otimes \Id)\circ \Yup_{V^g,\mathcal {M}}(z)m.$$
From Proposition \ref{Prop4.1}, we know $C_g(V)$ is a sub coassociative coalgebra of $C(V^g)$. Hence if $\Delta_{M_0}(m)\in C_g(V)\otimes M_0$ for all $m\in M_0$, $M_0$ is automatically a $C_g(V)$-comodule.

First, we prove $\Delta_{M_0}(m)\in V\otimes M_0$ for any $m\in M_0.$ This follows from
  \begin{eqnarray*}
    \Delta_{M_0}(m)&=&\Res_z(z^{L(0)-1}\otimes \Id)\circ \Yup_\mathcal {M}(z)m\\
    &=&\Res_z(z^{L(0)-1}\otimes \Id)\circ\sum_{k\in\mathbb{Z}}(z^{\frac{-\ln g}{2\pi i}}\otimes\Id)\circ\Delta_{\mathcal {M},k}(m)z^{-k-1}\\
    &=&\Res_z\sum_{k\in\mathbb{Z}}\sum\sum_{s\in\mathbb{\mathbb{Z}}}v'^{,r}_{s}\otimes m''^{,r}_{k+1+\frac{r}{T}-s}z^{-k-1-\frac{r}{T}+s-1}\\
    &=&\sum_{k\in\mathbb{Z}}\sum v'_{k+1}\otimes m''_{0}\in V\otimes M_0.
  \end{eqnarray*}
From above computation, we also have $r=0$. This means $\Delta_{M_0}(m)\in V^{(0)}\otimes M_0.$

Second, we prove $\Delta_{M_0}(m)\in C_g(V)\otimes M$ for any $m\in M_0.$ From Lemma \ref{4.6}, this is equivalent to prove
 \begin{equation*}
  \left\{ \begin{aligned}
           &\Res_z(\delta_0\otimes\delta_0\otimes\Id)\circ(\frac{(1+z)^{L(0)}}{z^2}\otimes\Id\otimes\Id)\circ(\Yup(z)\otimes\Id)\circ\Delta_{M_0}(m)=0, \\
           &\Res_z(\delta_r\otimes\delta_{-r}\otimes\Id)\circ(\frac{(1+z)^{L(0)-1+\frac{r}{T}}}{z}\otimes\Id\otimes\Id)\circ(\Yup(z)\otimes\Id)\circ\Delta_{M_0}(m)=0,
                            \end{aligned} \right.
                            \end{equation*}
for all $r=1,...,T-1.$

For the first identity, since $\pi|_{V^{(0)}}:V^{(0)}\rightarrow V^g$ is bijective and $M_0$ is a $C(V^g)$-comodule, we have $$\Res_z(\pi\otimes\pi\otimes\Id)\circ(\delta_0\otimes\delta_0\otimes\Id)\circ(\frac{(1+z)^{L(0)}}{z^2}\otimes\Id\otimes\Id)\circ(\Yup(z)\otimes\Id)\circ\Delta_{M_0}(m)=0.$$This proves the first identity.

For the second identity, applying $\delta_r\otimes\delta_{-r}\otimes\Id$ to twisted Jacobi identity \ref{JIM3.4}, the results of both sides belong to $V^{(r)}\otimes V^{(-r)}\otimes \mathcal {M}\{z_0,z_1,z_2\}$. Let $r\neq0$, using Lemma \ref{Lm5.1}, applying $$\Res_{z_0.z_1}(\delta_r\otimes\delta_{-r}\otimes\Id)\circ z_0^{-1}(z_1^{L(0)-1+\frac{r}{T}}\otimes\Id\otimes\Id)$$ to twisted Jacobi identity \ref{JIM3.4}. Since $m\in M_0,$ we have
 \begin{align*}
 -&\Res_{z_0.z_1}(\delta_r\otimes\delta_{-r}\otimes\Id)\circ z_0^{-2}\delta(\frac{z_2-z_1}{-z_0})(\tau\otimes\Id)\circ(\Id\otimes z_1^{L(0)-1+\frac{r}{T}}\otimes\Id)\circ(\Id\otimes \Yup_\mathcal {M} (z_1))\circ \Yup_\mathcal {M} (z_2)m\\
 =&\Res_{z_0.z_1}(\delta_r\otimes\delta_{-r}\otimes\Id)\circ z_2^{-1}\delta(\frac{z_1-z_0}{z_2})((z_2+z_0)^{\frac{r}{T}}z_1^{L(0)-1}\otimes\Id\otimes\Id)\circ(\Yup (z_0)\otimes \Id)\circ \Yup_\mathcal {M} (z_2)m.
 \end{align*}
Applying $\Res_{z_2}\Id\otimes z_2^{L(0)-\frac{r}{T}}\otimes\Id$ to above identity, we have
 \begin{align*}
 &0\\
 =&\Res_{z_0.z_2}(\delta_r\otimes\delta_{-r}\otimes\Id)\circ ((z_2+z_0)^{L(0)-1+\frac{r}{T}}\otimes z_2^{L(0)-\frac{r}{T}}\otimes\Id)\circ(\Yup (z_0)\otimes \Id)\circ \Yup_\mathcal {M} (z_2)m\\
 =&\Res_{z_2}(\delta_r\otimes\delta_{-r}\otimes\Id)\circ(\sum_{i\geq0}\tbinom{L(0)-1+\frac{r}{T}}{i}z_2^{L(0)-1-i}\otimes z_2^{L(0)}\otimes\Id)\circ(\Delta_{i-1}\otimes \Id)\circ \Yup_\mathcal {M} (z_2)m\\
 =&\Res_{z_2}(\delta_r\otimes\delta_{-r}\otimes\Id)\circ(\sum_{i\geq0}\tbinom{L(0)-1+\frac{r}{T}}{i}\Delta_{i-1}\otimes\Id)\circ(z_2^{L(0)-1}\otimes \Id)\circ \Yup_\mathcal {M} (z_2)m\\
 =&\Res_z(\delta_r\otimes\delta_{-r}\otimes\Id)\circ(\frac{(1+z)^{L(0)-1+\frac{r}{T}}}{z}\otimes\Id\otimes\Id)\circ(\Yup(z)\otimes\Id)\circ\Delta_{M_0}(m).
 \end{align*}
Hence, $\Delta_{M_0}(m)\in C_g(V)\otimes M,$ and this completes the proof.                                                            $\hfill\Box$

\begin{Proposition}\label{P4.3}
Let $(V,\Yup (z),c,\rho)$ be a graded vertex operator coalgebra and $g\in\Aut V$ of order $T$, $(\mathcal {M}^1,\Yup_{\mathcal {M}^1}(z))$ and $(\mathcal {M}^2,\Yup_{\mathcal {M}^2}(z))$ two $g$-twisted admissible $V$-comodules. If $\psi:\mathcal {M}^1\rightarrow \mathcal {M}^2$ is a $g$-twisted $V$-comodule homomorphism, then $\psi|_{M^1_0}:M^1_0\rightarrow M^2_0$ is a $C_g(V)$-comodule homomorphism. Thus we get a functor $\Omega_g$ from admissible admissible $g$-twisted $V$-comodules category to $C_g(V)$-comodules category defined by $\Omega_g(\mathcal {M})=M_0$.
\end{Proposition}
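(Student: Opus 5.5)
We first make precise what a homomorphism of $g$-twisted $V$-comodules is. By analogy with Definition \ref{DCM2.2}, we take it to be a linear map $\psi:\mathcal{M}^1\rightarrow\mathcal{M}^2$ that respects the $\frac{1}{T}\mathbb{N}$-gradations, so $\psi(M^1_t)\subseteq M^2_t$, and satisfies
$$(\Id\otimes\psi)\circ\Yup_{\mathcal{M}^1}(z)=\Yup_{\mathcal{M}^2}(z)\circ\psi.$$
If the paper's definition does not explicitly require grading preservation, we will derive it. The commutation relation, paired against $\rho$, shows that $\psi$ commutes with each $L(k)$. For admissible comodules the degree-zero piece $M_0$ can be characterized through the coaction, as in Lemma \ref{Lm5.1}. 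In particular $\psi(M^1_0)\subseteq M^2_0$, so the restriction $\psi|_{M^1_0}:M^1_0\rightarrow M^2_0$ makes sense.

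The main step is to check the comodule homomorphism identity of Definition \ref{DCM2.2} for the coactions of Theorem \ref{CM0}. For $m\in M^1_0$ we compute
\begin{eqnarray*}
(\Id\otimes\psi)\circ\Delta_{M^1_0}(m)&=&(\Id\otimes\psi)\circ\Res_z(z^{L(0)-1}\otimes\Id)\circ\Yup_{\mathcal{M}^1}(z)m\\
&=&\Res_z(z^{L(0)-1}\otimes\Id)\circ(\Id\otimes\psi)\circ\Yup_{\mathcal{M}^1}(z)m\\
&=&\Res_z(z^{L(0)-1}\otimes\Id)\circ\Yup_{\mathcal{M}^2}(z)\psi(m)=\Delta_{M^2_0}(\psi(m)).
\end{eqnarray*}
The second equality holds because $z^{L(0)-1}\otimes\Id$ and $\Id\otimes\psi$ act on different tensor factors, so they commute. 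The $\Res_z$ operation also commutes with $\Id\otimes\psi$, since it only extracts coefficients in $z$. The twist factor $z^{-\frac{\ln g}{2\pi i}}$ sits inside $\Yup_{\mathcal{M}}(z)$ on the $V$-factor, so it causes no trouble.

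Theorem \ref{CM0} shows that both sides lie in $C_g(V)\otimes M^2_0$. Hence $\psi|_{M^1_0}$ is a $C_g(V)$-comodule homomorphism.

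Functoriality is then immediate. The restriction of $\Id_{\mathcal{M}}$ is $\Id_{M_0}$, and the restriction of a composite $\psi_2\circ\psi_1$ is the composite of the restrictions. Together with Theorem \ref{CM0}, which supplies the objects $\Omega_g(\mathcal{M})=M_0$, this gives the functor $\Omega_g$.

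The only potentially delicate point is showing $\psi(M^1_0)\subseteq M^2_0$ if grading preservation is not built into the definition of a homomorphism. Our approach there is to use $L(0)$-compatibility, or to require that homomorphisms be graded, as is standard for admissible modules.
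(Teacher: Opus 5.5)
Your argument is correct, provided homomorphisms of admissible $g$-twisted comodules are taken to be graded. It follows a different route from the paper.

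\textbf{The paper's route.} The paper never computes with $\psi$ directly. It observes via Proposition \ref{Prop3.9} that $\mathcal{M}^1,\mathcal{M}^2$ are admissible $V^g$-comodules. It then quotes the untwisted result of \cite{W} to conclude that $\psi|_{M^1_0}$ is a $C(V^g)$-comodule homomorphism. Finally it descends to $C_g(V)$, using two facts: $\pi$ embeds $C_g(V)$ into $C(V^g)$ compatibly with the coproducts (Lemma \ref{Lm4.3}, Proposition \ref{Prop4.1}), and the coaction of Theorem \ref{CM0} corresponds to the $C(V^g)$-coaction under $\pi\otimes\Id$.

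\textbf{Your route.} You verify the intertwining identity directly. $\Id\otimes\psi$ commutes with $\Res_z(z^{L(0)-1}\otimes\Id)$ because the two act on different tensor factors, and the twist factor also sits on the $V$-factor. Theorem \ref{CM0} then places both sides in $C_g(V)\otimes M^2_0$.

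\textbf{Comparison.} Your argument is self-contained and shows the statement is purely formal once Theorem \ref{CM0} is available. It needs neither the untwisted theory nor the injectivity of $\pi$ on $C_g(V)$. The paper's proof is shorter on the page but leans on those external inputs.

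\textbf{A step to repair.} Your attempt to derive $\psi(M^1_0)\subseteq M^2_0$ from compatibility with the $L(k)$ does not work, because the admissible $\frac{1}{T}\mathbb{N}$-grading is not determined by $L(0)$.
\begin{itemize}
\item If $\mathcal{N}=\oplus_t N_t$ is admissible, the shifted grading $N[1]_t=N_{t-1}$ (with $N_s=0$ for $s<0$) is again admissible.
\item The map $n\mapsto(0,n)$ from $\mathcal{N}$ to $\mathcal{N}\oplus\mathcal{N}[1]$ intertwines the coactions and commutes with every $L(k)$.
\item Yet it sends $N_0$ into degree $1$, meeting the degree-zero piece $N_0\oplus 0$ trivially.
\end{itemize}
Likewise, Lemma \ref{Lm5.1} only gives a vanishing property satisfied by elements of $M_0$; it does not characterize $M_0$.

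So drop that derivation and build grading preservation into the definition of homomorphism. This is exactly what the statement presupposes when it writes $\psi|_{M^1_0}:M^1_0\rightarrow M^2_0$, and the paper assumes it implicitly as well.
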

\proof From Proposition \ref{Prop3.9}, $\mathcal {M}^1,\mathcal {M}^2$ are also two admissible $V^g$-comodules. From \cite{W}, we know $\psi|_{M^1_0}$ is a $C(V^g)$-comodule homomorphism. Hence it is also a $C_g(V)$-comodule homomorphism.
                                                   $\hfill\Box$

\section{Admissible $g$-twisted $V$-comodules}
Let $(V,\Yup(z),c,\rho)$ be a graded vertex operator coalgebra and $g\in\Aut V$ a finite order automorphism of order $T$. From section 4, we have a coassociative coalgebra $C_g(V)$ if $\dim C_g(V)<\infty$. This implies $C_g(V)^*$ is an associative algebra. On the other hand, from Proposition \ref{VC-VA} and Proposition \ref{Prop3.2}, we know $(V',Y(\cdot,z),c|_{V_0},\rho|_{V_0})$ is a vertex operator algebra and $g^{-1}\in \Aut V'$ is also a finite order automorphism of $V'$ of order $T$. Recall from \cite{DLM2} the twisted associative algebra $A_{g^{-1}}(V')$ which is defined as follows: Let $V'^{,(r)}=\{V^*\in V'| g^{-1}(v^*)=e^{\frac{2\pi ir}{T}}\}$. set
\begin{equation*}
O_{g^{-1}}(V')=\{u^*\circ_{g^{-1}}v^*=\Res_z\frac{(1+z)^{\wt u^*-1+\delta_0+\frac{r}{T}}}{z^{1+\delta_0}}Y(u^*,z)v^*| u^*\in V'^{,(r)},v^*\in V'\},
\end{equation*}
where $\delta_i:V'\rightarrow V'$ is induced from $\delta_i:V\rightarrow V,$ i.e., $\delta_i(u^*)=u^*$ if $u^*\in V'^{,(i)}$, and $\delta_i(u^*)=0$ if $u^*\notin V'^{,(i)}$. Set $A_{g^{-1}}(V')=V'/O_{g^{-1}}(V')$, for $u^*\in V'^{,(r)}$, set
\begin{equation*}
u^**_{g^{-1}}v^*=  \left\{ \begin{aligned}
           &\Res_z\frac{(1+z)^{\wt u^*}}{z}Y(u^*,z)v^*,~&r=0, \\
           &0,~&r\neq0, \\
                            \end{aligned} \right.
\end{equation*}
If $\dim A_{g^{-1}}(V')<\infty$, we have another coassociative coalgebra $A_{g^{-1}}(V')^*.$

\begin{Lemma}\label{Lm6.1}
$v\in C_g(V)$ if and only if $(v^*,v)=0$ for all $v^*\in O_{g^{-1}}(V').$
\end{Lemma}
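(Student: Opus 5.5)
The plan is to pair the defining condition (\ref{def}) of $C_g(V)$ against an arbitrary element $u^*\otimes v^*\in V'\otimes V'$ and recognize the result as $(u^*\circ_{g^{-1}}v^*,v)$. Both sides are linear, so it suffices to take $u^*,v^*$ homogeneous, with $u^*\in V'^{,(r)}$ for the automorphism $g^{-1}$ of $V'$ from Proposition \ref{Prop3.2}.

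\textbf{Step 1: matching eigenspaces.} Since $(g^{-1}(u^*),w)=(u^*,g(w))$, the element $u^*$ pairs nontrivially only with the eigenspace $V^{(r)}$ of $g$. On $V^{(r)}$ the operator $\frac{\ln g}{2\pi i}$ acts by $\frac{r}{T}$. Likewise $\delta_0$ on $V$ and $\delta_0$ on $V'$ are adjoint under the pairing. Also $\wt u^*=L(0)$-weight, because $V'_s=V_s^*$.

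\textbf{Step 2: the pairing identity.} For homogeneous $u^*\in V'^{,(r)}$, $v^*\in V'$ and $v\in V$, I will verify
\begin{eqnarray*}
&&\Big(u^*\otimes v^*,\ \Res_z\big(\tfrac{(1+z)^{L(0)-1+\delta_0+\frac{\ln g}{2\pi i}}}{z^{1+\delta_0}}\otimes\Id\big)\circ\Yup(z)v\Big)\\
&=&\Res_z\frac{(1+z)^{\wt u^*-1+\delta_0+\frac{r}{T}}}{z^{1+\delta_0}}\big(u^*\otimes v^*,\Yup(z)v\big)\\
&=&\Res_z\frac{(1+z)^{\wt u^*-1+\delta_0+\frac{r}{T}}}{z^{1+\delta_0}}\big(Y(u^*,z)v^*,v\big)\\
&=&(u^*\circ_{g^{-1}}v^*,v).
\end{eqnarray*}
The first equality transposes the operator in the first tensor slot onto $u^*$, using Step 1. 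The second is Proposition \ref{VC-VA}(ii).

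One point needs care: commuting $\Res_z$ and the expansion of $(1+z)^{\alpha}$ with the pairing. For fixed $v$, only finitely many $\Delta_k(v)$ pair nontrivially, because of the grading and the vanishing of $\Delta_k(v)$ for $k\ll0$. So each coefficient is a finite sum and the interchange is legitimate.

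\textbf{Step 3: conclusion.} Since $V'$ is the graded dual and each $V_s$ is finite dimensional, an element $x\in V\otimes V$ is zero if and only if $(u^*\otimes v^*,x)=0$ for all homogeneous $u^*,v^*$. Apply this to $x$ equal to the residue in (\ref{def}), which lies in $V\otimes V$. Then Step 2 gives both directions of the claimed equivalence at once. The set $O_{g^{-1}}(V')$ is spanned by the elements $u^*\circ_{g^{-1}}v^*$, so it is enough to test against these generators.

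\textbf{Main obstacle.} The hardest part is the bookkeeping of the twist: checking that the eigenvalue convention for $g^{-1}$ on $V'$, namely $V'^{,(r)}$, really matches $\frac{\ln g}{2\pi i}=\frac{r}{T}$ on $V^{(r)}$ rather than $-\frac{r}{T}$. This depends on the paper's definition $(g(v^*),v)=(v^*,g^{-1}(v))$, which I will track carefully in Step 1. If the signs come out reversed, I would relabel $r\mapsto T-r$, which is harmless since $O_{g^{-1}}(V')$ runs over all $r$.
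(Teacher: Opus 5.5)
Your proposal is correct and is essentially the paper's own argument. You pair the defining residue of $C_g(V)$ with $u^*\otimes v^*$, identify the result via Proposition \ref{VC-VA}(ii) with $(u^*\circ_{g^{-1}}v^*,v)$ using that $V'^{,(r)}$ pairs only with $V^{(r)}$, and obtain both directions from nondegeneracy of the pairing.

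Your Step 1 check comes out with the right sign, so the relabelling fallback is never needed. It would not in fact be harmless, since the exponent $\frac{r}{T}$ in $O_{g^{-1}}(V')$ is tied to the $g^{-1}$-eigenvalue of $u^*$.

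One small correction, shared with the paper: the residue in (\ref{def}) is in general an infinite sum $\sum_{j\geq0}\big(\binom{L(0)-1+\delta_0+\frac{\ln g}{2\pi i}}{j}\otimes\Id\big)\Delta_{j-1-\delta_0}(v)$. It therefore lives in the completion $\prod_{a,b}V_a\otimes V_b$ rather than in $V\otimes V$. Your Step 3 separation argument still works verbatim there, because each $V_a\otimes V_b$ is finite dimensional and a homogeneous $u^*\otimes v^*$ sees only one $\Delta_k(v)$.
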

\proof Let $v\in C_g(V),u^*\circ_{g^{-1}} v^*\in O_{g^{-1}}(V')$. By linearity, we can assume $u^*\in V'^{,(r)}$ is homogeneous. Now we have
  \begin{eqnarray*}
   (u^*\circ_{g^{-1}} v^*,v)&=&(\Res_z\frac{(1+z)^{\wt u^*-1+\delta_0+\frac{r}{T}}}{z^{1+\delta_0}}Y(u^*,z)v^*,v)\\
   &=&(\Res_zY(\cdot,z)\circ(\frac{(1+z)^{L(0)-1+\delta_0+\frac{\ln g^{-1}}{2\pi i}}}{z^{1+\delta_0}}\otimes\Id)(u^*\otimes v^*),v)\\
   &=&(u^*\otimes v^*,\Res_z(\frac{(1+z)^{L(0)-1+\delta_0+\frac{\ln g}{2\pi i}}}{z^{1+\delta_0}}\otimes\Id)\circ\Yup(z)v)=0.
  \end{eqnarray*}
The third identity follows since $(V'^{,(r)},V^{(s)})=0$ for $r\neq s.$

Conversely, if $v\notin C_g(V)$, we have $\Res_z(\frac{(1+z)^{L(0)-1+\delta_0+\frac{\ln g}{2\pi i}}}{z^{1+\delta_0}}\otimes\Id)\circ\Yup(z)v\in V\otimes V[[z,z^{-1}]]$ is nonzero. Hence there exist $u^*,v^*\in V'$ such that $$(u^*\otimes v^*,\Res_z(\frac{(1+z)^{L(0)-1+\delta_0+\frac{\ln g}{2\pi i}}}{z^{1+\delta_0}}\otimes\Id)\circ\Yup(z)v)\neq0.$$ Now we have an element $f\circ g\in O_{g^{-1}}(V')$ such that $(u^*\circ_{g^{-1}}v^*,v)\neq0.$                                         $\hfill\Box$

\begin{Proposition}\label{Prop5.2A-C}
(i) Suppose $\dim C_g(V)<\infty$, there is a well-defined surjective homomorphism $\Phi_g:A_{g^{-1}}(V')\rightarrow C_g(V)^*$ of algebras.

(ii) Suppose $\dim A_{g^{-1}}(V')<\infty$, there is a well-defined epimorphism $\Psi_g:C_g(V)\rightarrow A_{g^{-1}}(V')^*$ of coalgebras.

(iii) If both $C_{g^{-1}}(V)$ and $A_{g^{-1}}(V')$ are finite dimensional, then $\Phi_g$ and $\Psi_g$ are isomorphic.
\end{Proposition}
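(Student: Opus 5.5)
The plan is to build both maps from the natural pairing between $V'$ and $V$, using Lemma \ref{Lm6.1}. That lemma says $C_g(V)$ is exactly the annihilator of $O_{g^{-1}}(V')$ in $V$. Since each $V_s$ is finite dimensional and $V'=\oplus V_s^*$, the orthogonality is compatible with the grading: $C_g(V)$ is graded, because the defining operator in (\ref{def}) is a sum of homogeneous pieces. So the pairing descends degreewise to a pairing $A_{g^{-1}}(V')\times C_g(V)\to\mathbb{C}$.

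\textbf{Part (i).} For $v^*\in V'$, define $\Phi_g(v^*+O_{g^{-1}}(V'))=(v^*,\cdot)|_{C_g(V)}$. This is well defined by the "only if" direction of Lemma \ref{Lm6.1}.

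For surjectivity: $C_g(V)$ is a finite dimensional graded subspace, so it lies in finitely many $V_s$. Any functional on it extends to $\oplus_s V_s$ and can be taken to vanish outside these finitely many degrees, so it lies in $V'$.

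For multiplicativity, first check the unit. The unit of $A_{g^{-1}}(V')$ is the class of $\textbf{1}_{V'}=c|_{V_0}$, and it maps to $c|_{C_g(V)}$, which is the unit $\epsilon$ of $C_g(V)^*$ by Proposition \ref{A-C}(i).

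Next check the product. Take $u^*\in V'^{,(0)}$. Using the pairing of Proposition \ref{VC-VA}, compute
\[
(u^**_{g^{-1}}v^*,v)=\Big(u^*\otimes v^*,\Res_z\big(\tfrac{(1+z)^{L(0)}}{z}\otimes\Id\big)\circ\Yup(z)v\Big)=(u^*\otimes v^*,\Delta_g(v)),
\]
for $v\in C_g(V)$. The $\delta_0$ in (\ref{cop}) is harmless here because $u^*\in V'^{,(0)}$ pairs only with $V^{(0)}$.

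Take $u^*\in V'^{,(r)}$ with $r\neq0$. Then $u^**_{g^{-1}}v^*=0$. On the other side, $\Delta_g(v)\in V^{(0)}\otimes V$ by the $\delta_0$ in (\ref{cop}), so $(u^*\otimes v^*,\Delta_g(v))=0$ as well. The two sides agree, and together this gives $\Phi_g(a*b)=\Phi_g(a)\Phi_g(b)$.

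\textbf{Part (ii).} Define $\Psi_g(v)=(\cdot,v)$. This is a functional on $V'$ vanishing on $O_{g^{-1}}(V')$ by Lemma \ref{Lm6.1}, hence an element of $A_{g^{-1}}(V')^*$. The same identity as in part (i) shows $\Psi_g$ intertwines $\Delta_g$ with the dual coproduct. The counits match because $\epsilon_{A^*}(f)=f(\textbf{1}_{V'})$ and $(c|_{V_0},v)=c(v)$.

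For surjectivity: $A_{g^{-1}}(V')$ is finite dimensional, so it is spanned by images of finitely many homogeneous pieces of $V'$. A functional on it gives a functional on a finite sum $\oplus_{s\in S}V_s^*$ vanishing on $O_{g^{-1}}(V')\cap\oplus_{s\in S}V_s^*$. By finite dimensional double duality this functional is represented by some $v\in\oplus_{s\in S}V_s$, and that $v$ lies in $C_g(V)$ by the "if" direction of Lemma \ref{Lm6.1}.

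\textbf{Part (iii).} This is where I expect the main difficulty: matching the grading of $O_{g^{-1}}(V')$ with the degreewise duality. When both spaces are finite dimensional, the surjections of (i) and (ii) give $\dim A_{g^{-1}}(V')\ge\dim C_g(V)$ and $\dim C_g(V)\ge\dim A_{g^{-1}}(V')$, so both maps are bijective. I read the $C_{g^{-1}}(V)$ in the hypothesis as $C_g(V)$.

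The obstacle is that $O_{g^{-1}}(V')$ is not obviously graded: its generators $u^*\circ_{g^{-1}}v^*$ mix weights through $(1+z)^{\wt u^*}$. I would handle this as follows.

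\begin{itemize}
\item Work with the annihilator $O^\perp\subset\prod_s V_s$ rather than with $O_{g^{-1}}(V')$ directly.
\item Apply the homogeneous components of the defining condition of $C_g(V)$; after the weight shift $L(0)$ in the exponent, each $\Delta_k$ is homogeneous. This shows $O^\perp$ is closed under taking graded components.
\item Conclude $O^\perp\cap\oplus_s V_s=C_g(V)$, and that in each truncation $\oplus_{s\le N}$ the orthogonal of $C_g(V)$ is the truncation of $O_{g^{-1}}(V')$.
\end{itemize}

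This is what justifies the injectivity implicitly used in part (ii) and makes the dimension count in part (iii) valid.
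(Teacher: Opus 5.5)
Your part (i) matches the paper. Well-definedness comes from the lemma identifying $C_g(V)$ with the annihilator of $O_{g^{-1}}(V')$. Multiplicativity comes from $(u^**_{g^{-1}}v^*,v)=(u^*\otimes v^*,\Delta_g(v))$. Surjectivity holds because a finite-dimensional subspace of $V$ lies in finitely many $V_s$. That last step does not need $C_g(V)$ to be graded, which is fortunate: the operator in (\ref{def}) is $\sum_{j\ge0}(\binom{L(0)-1+\delta_0+\frac{\ln g}{2\pi i}}{j}\otimes\Id)\circ\Delta_{j-1-\delta_0}$, whose summands have different degrees, so its kernel need not be graded.

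\textbf{The gap: surjectivity of $\Psi_g$.} Your dimension count in (iii) rests on the surjectivity of $\Psi_g$ in (ii), and that step fails. The paper's own proof of (ii) and (iii) leans on the same unproved surjectivity. Your vector $v\in\oplus_{s\in S}V_s$ is only orthogonal to $O_{g^{-1}}(V')\cap\oplus_{s\in S}V_s^*$. The ``if'' direction of the annihilator lemma needs orthogonality to all of $O_{g^{-1}}(V')$, which is spanned by non-homogeneous vectors with components outside $S$. The repair you sketch also fails, because the annihilator $O^\perp\subseteq\prod_sV_s$ is not closed under taking graded components. To see this, take homogeneous $u^*\in V'^{,(0)}$. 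Then $u^*\circ_{g^{-1}}\textbf{1}_{V'}=L(-1)u^*+(\wt u^*)u^*\in O_{g^{-1}}(V')$; this is the dual of Lemma~\ref{Lm4.4}. Hence every $f\in O^\perp$ satisfies $f(L(-1)^nu^*)=(-1)^n\wt u^*(\wt u^*+1)\cdots(\wt u^*+n-1)f(u^*)$. If $\wt u^*>0$ and $f(u^*)\neq0$, then $f$ is nonzero in infinitely many degrees. So neither $f$ nor any truncation of it lies in $V\cap O^\perp=C_g(V)$.

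\textbf{The gap cannot be closed.} The surjectivity in (ii) and the isomorphism in (iii) are false in general. Take $g=\Id$ and $V=\mathcal V'$ for a vertex operator algebra $\mathcal V$ of CFT type with $\dim A(\mathcal V)\ge2$, for example $\mathcal V=L(\tfrac12,0)$ with $A(\mathcal V)\cong\C^3$; then $V'=\mathcal V$. By Lemma~\ref{Lm4.4}, every $v\in C(V)$ satisfies $L(1)v+L(0)v=0$, where $L(1)$, the transpose of $L(-1)$ on $V'$, lowers degree by one. If $v_N$ is the top nonzero component of $v$, the degree-$N$ part of this equation reads $Nv_N=0$. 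Hence $v\in V_0=\mathcal V_0^*$, which is one-dimensional. Thus $\dim C(V)\le1<3=\dim A(V')$, so $\Phi_g$ is not injective and $\Psi_g$ is not surjective.

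What your argument (and the paper's) actually establishes is (i), together with the dual fact that $\Psi_g$ is injective, since $V'$ separates points of $V$. The kernel of $\Phi_g$ is $C_g(V)^\perp/O_{g^{-1}}(V')$, and this can be nonzero.
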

\proof (i) Since $C_g(V)$ is a sub space of $V$, we have a linear map $V'\rightarrow V^*\rightarrow C_g(V)^*,$ denote it by $\phi_g.$ By Lemma \ref{Lm5.1}, we know $O_{g^{-1}}(V')\subseteq\ker\phi.$ Hence, we get a well-defined linear map $\Phi_g:A_{g^{-1}}(V')\rightarrow C_g(V)^*.$

Now $\forall u^*,v^*\in A_{g^{-1}}(V'), v\in V$, we have
  \begin{eqnarray*}
   &&(\Phi_g(u^**_{g^{-1}}v^*),v)=(u^**_{g^{-1}}v^*,\Delta_g(v))=\sum(u^*,v')(v^*,v'')\\
   &=&\sum(\Phi_g(u^*),v')(\Phi_g(v^*),v'')=(\Phi_g(u^*)*\Phi_g(v^*),v).
  \end{eqnarray*}
Hence, $\Phi_g$ is a homomorphism.

If $\dim C_g(V)<\infty$, we know there is $k\in\mathbb{N}$ such that $C_g(V)\subseteq\oplus_{i\leq k}V_i$. This implies the linear map $\phi_g:V'\rightarrow C_g(V)^*$ is surjective. Hence $\Phi_g$ is epimorphic.

(ii) Since $A_{g^{-1}}(V')^*=\{v\in (V')^*|(v^*,v)=0,\forall v^*\in O(V')\}.$ By Lemma \ref{Lm5.1} and $V\rightarrow (V')^*$, there is a well-defined linear map $\Psi_g:C_g(V)\rightarrow A_{g^{-1}}(V')^*.$ Similar to the proof of (i), we can prove the rest statements of (ii).

(iii) Suppose $\dim A_{g^{-1}}(V')<\infty$, then we have a monomorphism $\Psi_g^*:A_{g^{-1}}(V')^{**}=A_{g^{-1}}(V')\rightarrow C_g(V)^*$ by (ii). Furthermore, $\Psi_g^*=\Phi_g$. Hence, they are isomorphisms.                                                                                   $\hfill\Box$

\begin{Lemma}\label{Lm5.3}
Let $V$ be a graded vertex operator coalgebra and $g\in\Aut V$ a finite order automorphism of order $T$. If $\dim A_{g^{-1}}(V')<\infty$, then we have $\dim C_g(V)<\infty.$
\end{Lemma}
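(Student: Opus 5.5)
The plan is to identify $C_g(V)$ with an annihilator and then bound its dimension by that of $A_{g^{-1}}(V')$. By Lemma \ref{Lm6.1}, $v\in C_g(V)$ if and only if $(v^*,v)=0$ for all $v^*\in O_{g^{-1}}(V')$. Let $\iota:V\rightarrow (V')^*$ be the canonical embedding, given by $\iota(v)(v^*)=(v^*,v)$. It is injective because each $V_s$ is finite dimensional and $V'=\oplus_s V_s^*$. Under $\iota$, the space $C_g(V)$ maps into
$$O_{g^{-1}}(V')^{\perp}=\{f\in (V')^*\mid f(O_{g^{-1}}(V'))=0\}.$$
This annihilator is canonically isomorphic to $(V'/O_{g^{-1}}(V'))^*=A_{g^{-1}}(V')^*$. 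This is the linear map $\Psi_g$ of Proposition \ref{Prop5.2A-C}(ii), but we only need it as a linear map here. In particular we do not need the coalgebra structure on $C_g(V)$, which would require finiteness of $C_g(V)$ and hence be circular.

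The key steps, in order, are these.

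First, I would check that the restriction $\iota|_{C_g(V)}:C_g(V)\rightarrow A_{g^{-1}}(V')^*$ is well defined, which is exactly the "only if" direction of Lemma \ref{Lm6.1}.

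Second, I would note that it is injective, since $\iota$ itself is injective.

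Third, I would use the hypothesis $\dim A_{g^{-1}}(V')<\infty$, which gives $\dim A_{g^{-1}}(V')^*=\dim A_{g^{-1}}(V')<\infty$. Therefore
$$\dim C_g(V)\leq \dim A_{g^{-1}}(V')<\infty.$$

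The only point requiring care is the interplay between the full dual $(V')^*$ and the graded dual. In Lemma \ref{Lm6.1}, $v$ ranges over $V$, which is the graded double dual, not the full double dual. So the map lands in $A_{g^{-1}}(V')^*$ but need not be surjective. Injectivity is all we need for the dimension bound, so this causes no problem. I would also remark, as a consistency check, that $O_{g^{-1}}(V')$ is spanned by homogeneous elements, so the pairing respects gradings. I do not expect any genuine obstacle: the substantive input is Lemma \ref{Lm6.1}, and the lemma follows from it by linear algebra.
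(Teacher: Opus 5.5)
Your proof is correct. It takes a different route from the paper's, and the difference matters.

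The paper argues that $\dim A_{g^{-1}}(V')<\infty$ yields a $k$ with $V_i^*\subseteq O_{g^{-1}}(V')$ for all $i>k$. Then each $v\in C_g(V)$ is annihilated by every $V_i^*$ with $i>k$, and hence $C_g(V)\subseteq\bigoplus_{i\le k}V_i$.

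That intermediate claim does not follow from finite codimension. The subspace $O_{g^{-1}}(V')$ is not graded, since its generators $u^*\circ_{g^{-1}}v^*$ mix weights. Finite codimension only gives $V'=\bigoplus_{i\le k}V_i^*+O_{g^{-1}}(V')$.

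The claim can in fact fail. Take $g=1$ and $V'=L(\tfrac12,0)$, whose Zhu algebra is $3$-dimensional. The relation $(L(-1)+L(0))v\in O(V')$ gives $[L(-2)v]=[\omega]*[v]+(\mathrm{wt}\,v)[v]$. Hence $L(-2)^n\mathbf{1}$ acts on the top level of $L(\tfrac12,\tfrac12)$ by $\prod_{j=0}^{n-1}(\tfrac12+2j)\neq0$, so it never lies in $O(V')$.

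Your argument avoids this problem. It uses only the annihilator characterization of $C_g(V)$ and the injectivity of $V\to(V')^*$. It yields the explicit bound $\dim C_g(V)\le\dim A_{g^{-1}}(V')$. The containment of $C_g(V)$ in finitely many weight spaces then follows afterwards from finite-dimensionality alone.

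Your caution that the embedding need not be onto $A_{g^{-1}}(V')^*$ is also justified. In the same example, with $V$ the graded dual of $L(\tfrac12,0)$, one finds $C(V)=\mathbb{C}\mathbf{1}^*$, which is one-dimensional.

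One correction: your side remark that $O_{g^{-1}}(V')$ is spanned by homogeneous elements is false in general. Were it true, the paper's stronger claim would follow. Nothing in your proof uses it, so drop it.
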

\proof Suppose $\dim A_{g^{-1}}(V')<\infty,$ then there is $k\in\mathbb{N}$, such that $V_i^*\subseteq O_{g^{-1}}(V')$ for all $i>k.$ This means that for any $v^*\in V_i^*,v\in C_g(V)$ with $i>k$, we have $(v^*,v)=0.$ Hence $C_g(V)\subseteq\oplus_{i\leq k}V_i,$ which implies $\dim C_g(V)<\infty.$                            $\hfill\Box$

\begin{Theorem}
Let $V$ be a graded vertex operator coalgebra and $g\in\Aut V$ a finite order automorphism of order $T$. If $\dim A_{g^{-1}}(V')<\infty$, then $\Phi_g$ and $\Psi_g$ are isomorphisms.
\end{Theorem}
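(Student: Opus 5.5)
The plan is to reduce the theorem to Proposition \ref{Prop5.2A-C}(iii) by means of Lemma \ref{Lm5.3}. Assume $\dim A_{g^{-1}}(V')<\infty$. Lemma \ref{Lm5.3} then gives $\dim C_g(V)<\infty$. With both finiteness hypotheses available, Theorem \ref{Thm4.3} shows that $(C_g(V),\Delta_g,c)$ is a coassociative coalgebra, so the statements about $\Phi_g$ and $\Psi_g$ make sense. Part (i) of Proposition \ref{Prop5.2A-C} gives a surjective algebra homomorphism $\Phi_g:A_{g^{-1}}(V')\rightarrow C_g(V)^*$. Part (ii) gives an epimorphism of coalgebras $\Psi_g:C_g(V)\rightarrow A_{g^{-1}}(V')^*$.

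The remaining task is to prove injectivity, and I will do this by dualizing. Both spaces are finite dimensional, so $\Psi_g^*:A_{g^{-1}}(V')=A_{g^{-1}}(V')^{**}\rightarrow C_g(V)^*$ is injective, because $\Psi_g$ is surjective. I will then check that $\Psi_g^*=\Phi_g$. Both maps are induced by the same pairing $(v^*,v)$ between $V'$ and $V$, restricted to $C_g(V)$ and passed to the quotient by $O_{g^{-1}}(V')$. Concretely, $(\Psi_g^*(v^*+O_{g^{-1}}(V')),v)=(v^*,\Psi_g(v))=(v^*,v)=(\Phi_g(v^*+O_{g^{-1}}(V')),v)$ for $v\in C_g(V)$. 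It follows that $\Phi_g$ is both injective and surjective, so it is an algebra isomorphism. Then $\dim C_g(V)=\dim C_g(V)^*=\dim A_{g^{-1}}(V')=\dim A_{g^{-1}}(V')^*$, so the surjection $\Psi_g$ between spaces of equal finite dimension is also bijective, and it is a coalgebra isomorphism.

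The main obstacle is the surjectivity input that I am taking from Proposition \ref{Prop5.2A-C}. The key claim is that the natural map $C_g(V)\rightarrow A_{g^{-1}}(V')^*$ hits every functional vanishing on $O_{g^{-1}}(V')$.

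If that step needs direct justification, I would argue as follows. By Lemma \ref{Lm5.3} there is a $k$ with $V_i'\subseteq O_{g^{-1}}(V')$ for $i>k$. Hence $A_{g^{-1}}(V')^*$ equals the annihilator of $O_{g^{-1}}(V')\cap\bigoplus_{i\le k}V_i'$ inside $(\bigoplus_{i\le k}V_i')^*=\bigoplus_{i\le k}V_i$, using finite dimensionality of each $V_i$. By Lemma \ref{Lm6.1}, an element $v\in\bigoplus_{i\le k}V_i$ annihilates all of $O_{g^{-1}}(V')$ exactly when $v\in C_g(V)$. So that annihilator is precisely $C_g(V)$, which gives surjectivity of $\Psi_g$ and, by the same finite-dimensional duality, injectivity of $\Phi_g$.
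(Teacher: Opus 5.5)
Your argument is the paper's argument: Lemma \ref{Lm5.3} gives $\dim C_g(V)<\infty$, and then you use Proposition \ref{Prop5.2A-C}(ii)--(iii) via $\Psi_g^*=\Phi_g$. You correctly locate the crux in the surjectivity of $\Psi_g$. But that step is a genuine gap, and it cannot be filled.

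Your justification rests on the claim, taken from the proof of Lemma \ref{Lm5.3}, that $V_i'\subseteq O_{g^{-1}}(V')$ for all $i>k$. Finite dimensionality of $A_{g^{-1}}(V')$ only gives $V'=\bigoplus_{i\le k}V_i'+O_{g^{-1}}(V')$. Since $O_{g^{-1}}(V')$ is not a graded subspace, nothing forces it to contain whole weight spaces.

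What holds unconditionally is that $\Psi_g$ is injective, because $V\to(V')^*$ is. Hence $\dim C_g(V)\le\dim A_{g^{-1}}(V')$ and $\Phi_g=\Psi_g^*$ is onto. Inside $(V')^*=\prod_iV_i$ one has $A_{g^{-1}}(V')^*=O_{g^{-1}}(V')^{\perp}$. By the lemma identifying $C_g(V)$ with the annihilator of $O_{g^{-1}}(V')$ in $V$, also $C_g(V)=V\cap O_{g^{-1}}(V')^{\perp}$. So $\Phi_g$ and $\Psi_g$ are isomorphisms if and only if $O_{g^{-1}}(V')^{\perp}\subseteq\bigoplus_iV_i$, i.e.\ if and only if $V_i'\subseteq O_{g^{-1}}(V')$ for all $i\gg0$. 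That is precisely the unproved claim, and it is also what the paper silently uses for surjectivity in Proposition \ref{Prop5.2A-C}(ii).

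The claim already fails for $g=\Id$, where $T=1$, $C_g(V)=C(V)$ and $O_{g^{-1}}(V')=O(V')$. Let $V$ be the graded dual of the Ising vertex operator algebra $L(\frac12,0)$, so $V'=L(\frac12,0)$ and $A(V')\cong\mathbb{C}^3$. For $h\in\{0,\frac12,\frac1{16}\}$, let $m_h$ span the one-dimensional lowest weight space of the irreducible module of lowest weight $h$. Define $f_h\in(V')^*$ by $o(v)m_h=f_h(v)m_h$, where $o(v)=v_{\mathrm{wt}\,v-1}$ for homogeneous $v$. By Zhu's theorem $o(O(V'))$ kills $m_h$, so $f_h\in A(V')^*$.

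Now use $\omega*v=L(-2)v+2L(-1)v+L(0)v$, $o(\omega*v)=o(\omega)o(v)$ on the top level, and $o(L(-1)v)=-(\mathrm{wt}\,v)\,o(v)$. These give $o(L(-2)v)=(h+\mathrm{wt}\,v)\,o(v)$ on the top level. Hence $f_h(L(-2)^n\mathbf{1})=\prod_{j=0}^{n-1}(h+2j)$, which for $h=\frac12$ is nonzero for every $n$. Thus $L(-2)^n\mathbf{1}\in V'_{2n}\setminus O(V')$ for all $n$, refuting the claim. Moreover $f_{1/2}$ has infinite support, so it is not in the image of $\Psi$.

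In fact $f_0=c$, $f_{1/2}$, $f_{1/16}$ span $A(V')^*$. The recursion $f_h(L(-2)^{n+1}\mathbf{1})=(h+2n)f_h(L(-2)^n\mathbf{1})$ shows that the only finitely supported combinations are multiples of $c$. So $C(V)=\mathbb{C}c$ is one-dimensional while $A(V')$ is three-dimensional.

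So the statement is false as written, and the paper's one-line proof has the same hole as yours. It becomes true, with your argument going through essentially verbatim, under the extra hypothesis $\bigoplus_{i>k}V_i'\subseteq O_{g^{-1}}(V')$ for some $k$. Equivalently, one needs $A_{g^{-1}}(V')^*\subseteq V$.
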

\proof This follows from Proposition \ref{Prop5.2A-C} and Lemma \ref{Lm5.3}.                    $\hfill\Box$

Now, let $(M,\Delta_M)$ be a left $C_g(V)$-comodule. Then, $M^*$ is a left $C_g(V)^*$-module. By Proposition \ref{Prop5.2A-C}, it is also an $A_{g^{-1}}(V')$-module. By Dong-Li-Mason's theory, we get an admissible $g^{-1}$-twisted $V'$-module $L_{g^{-1}}(M^*)$ satisfying $L_{g^{-1}}(M^*)_0=M^*.$ We give the construction of $L_{g^{-1}}(M^*)$ briefly.

Consider $\widehat{V'}[g^{-1}]=\oplus_{r=0}^{T-1}V'^{,(r)}\otimes t^{\frac{r}{T}}\C[t,t^{-1}]/D(\oplus_{r=0}^{T-1}V'^{,(r)}\otimes t^{\frac{r}{T}}\C[t,t^{-1}])$, where $D=L(-1)\otimes 1+1\otimes\frac{d}{dt}$. Then $\widehat{V'}[g^{-1}]$ is a $\frac{1}{T}\Z$-graded Lie algebra with triangular decomposition $$\widehat{V'}[g^{-1}]=\widehat{V'}[g^{-1}](-)\oplus\widehat{V'}[g^{-1}](0)\oplus\widehat{V'}[g^{-1}](+).$$ Furthermore, there is a well-defined Lie algebra epimorphism $\phi:\widehat{V'}[g^{-1}](0)\rightarrow A_{g^{-1}}(V')$. So $M^*$ is a $\widehat{V'}[g^{-1}](0)$-module. Regard $M^*$ as trivial $\widehat{V'}[g^{-1}](-)$-module. Set $$U_{g^{-1}}(M^*)=\mathcal {U}(\widehat{V'}[g^{-1}])\otimes_{\mathcal {U}(\widehat{V'}[g^{-1}](0)+\widehat{V'}[g^{-1}](-))}M^*\cong\mathcal {U}(\widehat{V'}[g^{-1}](+))\otimes M^*,$$ and $L_{g^{-1}}(M^*)=U_{g^{-1}}(M^*)/J$, where $$J=\{m^*\in U_{g^{-1}}(M^*)|<m^{**},fm^*>=0,\forall m^{**}\in M^{**},f\in\mathcal {U}(\widehat{V'}[g^{-1}])\}.$$
Then $(L_{g^{-1}}(M^*),Y_{M^*})$ is an admissible $g^{-1}$-twisted $V'$-module with $$Y_{M^*}(u^*,z)=\sum_{k\in\frac{r}{T}+\mathbb{Z}}u^*\otimes t^kz^{-k-1},$$ for any $u^*\in V'^{,(r)}$.

From above discussion, we know $L_{g^{-1}}(M^*)'$ is a left admissible $g$-twisted $V''=V$-comodule. On the other hand, there is an injection $M\rightarrow M^{**}=L_{g^{-1}}(M^*)_0^*\rightarrow L_{g^{-1}}(M^*)'$. If $\dim M<\infty,$ the first injection is also a bijection. Under above injection, we can regard $M$ as a subspace of $L_{g^{-1}}(M^*)'.$ Define $\mathcal {W}_g(M)$ to be the sub $g$-twisted $V$-comodule of $L_{g^{-1}}(M^*)'$ generated by $M$, i.e., the smallest sub $g$-twisted $V$-comodule containing $M$. Define $\mathcal {L}_g(M)$ to be the quotient $\mathcal {W}_g(M)/\mathcal {J}$, where $\mathcal {J}$ is the maximal sub $g$-twisted $V$-comodule of $\mathcal {W}_g(M)$ which intersects $M$ trivially.

\begin{Lemma}\cite{Z}
Let $\mathcal {N}$ be an admissible $g^{-1}$-twisted $V'$-module, then $N_0$ is an $A_{g^{-1}}(V')$-module with module structure defined as $\varrho_{N_0}(f\otimes n)=f\otimes t^{\wt f-1}n,$ if $f\in V'^{,(0)}$ is homogeneous and extend to all $f\in V'^{,(0)}$ by linearity., and $\varrho_{N_0}(f\otimes n)=0$ if $f\notin V'^{,(0)}$.
\end{Lemma}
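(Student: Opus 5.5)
The plan is to reduce the lemma to the untwisted case applied to the fixed-point vertex operator subalgebra, and then to deal separately with the components $f\in V'^{,(r)}$, $r\neq 0$. The statement is attributed to \cite{Z}, but as written it is the twisted version of \cite{DLM2}, so I would check three things. First, the operator $o(f)=f\otimes t^{\wt f-1}$ preserves $N_0$. Second, $o(O_{g^{-1}}(V'))$ acts as zero on $N_0$, so that the action factors through $A_{g^{-1}}(V')$. Third, $o(u^**_{g^{-1}}v^*)=o(u^*)o(v^*)$ on $N_0$, and $\1_{V'}=c|_{V_0}$ acts as the identity.

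Step one: for homogeneous $f\in V'^{,(0)}$, the component $f_{\wt f-1}$ has integral index, since $Y_{\mathcal N}(f,z)$ has modes in $\mathbb{Z}$ when $r=0$. The admissibility condition $a_sN_t\subset N_{\wt a+t-s-1}$ then gives $f_{\wt f-1}N_0\subset N_0$. For $f\in V'^{,(r)}$ with $r\neq0$, all modes lie in $\frac{r}{T}+\mathbb{Z}$, so no mode maps $N_0$ to $N_0$. This is why the action is declared to be $0$ there. It is consistent because $f\circ_{g^{-1}}v^*$ with $f\in V'^{,(r)}$, $r\ne0$, and $\delta_0 f=0$ gives $\Res_z\frac{(1+z)^{\wt f-1+r/T}}{z}Y(f,z)v^*$. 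By the DLM computation this lies in $O_{g^{-1}}(V')$, and in fact $V'^{,(r)}\subseteq O_{g^{-1}}(V')$ for $r\neq0$, via $u^*\circ_{g^{-1}}\1=u^*$ up to the binomial factor.

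Step two: take $u^*\in V'^{,(r)}$ and $v^*$ homogeneous. The case $r=0$ is the untwisted Zhu computation: $o(u^*\circ v^*)|_{N_0}=0$, proved using the Jacobi identity (\ref{ID2.3}) with $\Res_{z_0}$ and the vanishing of positive-degree modes on $N_0$ (the lowest weight space). For $r\neq0$, the element $u^*\circ_{g^{-1}}v^*$ has $g^{-1}$-eigenvalue $e^{2\pi i r/T}$, so its component in $V'^{,(0)}$ is zero and its action is zero by definition. This is immediate once we note that $Y(u^*,z)v^*\in V'^{,(r+s)}$, and that $\delta_0$ kills $O$-elements of nonzero eigenvalue unless $r+s\equiv 0$. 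The case $r+s\equiv0$ with $r\neq0$ is the one needing care. There I would apply the twisted Jacobi identity (\ref{ID2.3}) to $u^*,v^*$ acting on $n\in N_0$, multiply by $z_1^{\wt u^*-1+r/T}z_2^{\wt v^*}$ and take residues. Expanding $(z_2+z_0)^{\wt u^*-1+r/T}$ then reproduces $\Res_z\frac{(1+z)^{\wt u^*-1+r/T}}{z}Y(u^*,z)v^*$ acting as $o(\cdot)$. On the left, each term contains a mode $u^*_{j}$ with $j\in \frac rT+\mathbb{Z}$ of positive degree applied to $N_0$, or a product in which the intermediate space $N_{<0}$ vanishes. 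I expect this bookkeeping of fractional degrees to be the main obstacle.

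Step three: the multiplicativity $o(u^**v^*)=o(u^*)o(v^*)$ on $N_0$ for $u^*\in V'^{,(0)}$ is again the untwisted computation: use associativity from (\ref{ID2.3}) with $r=0$, truncated by $N_{<0}=0$. When $u^*\in V'^{,(r)}$ with $r\neq0$, both sides vanish, since $u^**_{g^{-1}}v^*=0$ and $o(u^*)=0$. Unit: $Y_{\mathcal N}(\1,z)=\Id$ gives $o(\1)=\1_{-1}=\Id$. Together these show $N_0$ is an $A_{g^{-1}}(V')$-module.
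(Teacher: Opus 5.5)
Your argument is correct. It is the standard Dong--Li--Mason verification, which the paper never carries out: the lemma is only cited there, attributed to \cite{Z}, although the twisted statement is really that of \cite{DLM2}. The ingredients are the same: reduction to $V'^{,(0)}$ using $V'^{,(r)}\subseteq O_{g^{-1}}(V')$ for $r\neq0$, then vanishing of $o(O_{g^{-1}}(V'))$ and multiplicativity on $N_0$ via residues of the twisted Jacobi identity together with $N_{<0}=0$.

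One correction is needed, in the case $u^*\in V'^{,(r)}$, $v^*\in V'^{,(T-r)}$, $r\neq0$. The multiplier must be $z_0^{-1}z_1^{\wt u^*-1+\frac{r}{T}}z_2^{\wt v^*-\frac{r}{T}}$, not $z_2^{\wt v^*}$. With $z_2^{\wt v^*}$ every power of $z_2$ lies in $\frac{r}{T}+\mathbb{Z}$, so the $z_2$-residue is identically zero and the identity says nothing. With the corrected exponent the right side is $o(u^*\circ_{g^{-1}}v^*)n$, and the left side is
$$\sum_{j\geq0}\bigl(u^*_{\wt u^*-2+\frac{r}{T}-j}v^*_{\wt v^*-\frac{r}{T}+j}+v^*_{\wt v^*-\frac{r}{T}-1-j}u^*_{\wt u^*-1+\frac{r}{T}+j}\bigr)n.$$
This vanishes because the inner modes send $N_0$ into $N_{\frac{r}{T}-1-j}$ and $N_{-\frac{r}{T}-j}$ respectively, and both spaces are zero.
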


\begin{Lemma}\label{Lm6.1}
Suppose $\dim M$ is countable. Then, we have $\dim\mathcal {L}_g(M)$ is countable. Furthermore, $\mathcal {L}_g(M)_0=M.$
\end{Lemma}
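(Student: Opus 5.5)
We first show that $\mathcal{W}_g(M)$ is spanned by coefficients of the comodule map applied to $M$, and from this get both the degree-zero part and the dimension count. The model is the untwisted argument of \cite{W}, with $z^{\frac{-\ln g}{2\pi i}}$ carried along.

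\emph{Step 1: a spanning set for $\mathcal{W}_g(M)$.} Let $\mathcal{N}=L_{g^{-1}}(M^*)$, so that $\mathcal{N}'$ is an admissible $g$-twisted $V$-comodule by Proposition \ref{Prop3.7}, and $M\subseteq N_0^*$. Let $W$ be the span of all vectors of the form $(v^*\otimes\Id)\circ\Delta_{\mathcal{N}',k}(m)$ with $m\in M$, $v^*\in V'$, $k\in\mathbb{Z}$. We will show that $W$ is a sub $g$-twisted $V$-comodule, using the twisted Jacobi identity (\ref{JIM3.4}) or, equivalently, weak coassociativity (Proposition \ref{WC-WA}(ii)). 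Pairing $(\Id\otimes\Yup_{\mathcal N'}(z_2))\circ\Yup_{\mathcal N'}(z_1)m$ against $u^*\otimes v^*$ rewrites it through $(\Yup(z_0)\otimes\Id)\circ\Yup_{\mathcal N'}(z_2)m$. Hence every coefficient of $\Yup_{\mathcal N'}$ applied to an element of $W$ again lies in $W$. Since $c$ acts as the counit, $m=(c\otimes\Id)\Yup_{\mathcal N'}(z)m\in W$, so $M\subseteq W$. Therefore $\mathcal{W}_g(M)=W$.

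\emph{Step 2: the degree-zero part.} By the grading condition, $(v^*\otimes\Id)\Delta_{\mathcal N',k}(m)$ with $v^*\in (V^{(r)}_s)^*$ lies in degree $k+1+\frac{r}{T}-s$. So $\mathcal{W}_g(M)_0$ is spanned by the terms with $k+1+\frac rT=s$, that is, $r=0$. These are exactly the images of $M$ under the operators dual to the zero-mode action of $V'^{,(0)}$ on $N_0=M^*$. That action factors through $A_{g^{-1}}(V')\cong C_g(V)^*$ by Proposition \ref{Prop5.2A-C}. Dually, it corresponds to $(v^*\otimes\Id)\circ\Delta_M$ with $\Delta_M(M)\subseteq C_g(V)\otimes M$, and this stays inside $M$. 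Hence $\mathcal{W}_g(M)_0=M$.

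Since $\mathcal{J}$ is the maximal subcomodule meeting $M$ trivially, and any subcomodule is graded, we have $\mathcal{J}_0\subseteq M\cap\mathcal{J}=0$. Therefore $\mathcal{L}_g(M)_0=M$.

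\emph{Step 3: countability.} By Step 1, $\mathcal{L}_g(M)$ is spanned by images of $(v^*\otimes\Id)\Delta_{\mathcal N',k}(m)$. Here $m$ runs over a countable basis of $M$, $k\in\mathbb{Z}$, and $v^*$ runs over a countable basis of $V'=\oplus_s V_s^*$, which is countable because each $V_s$ is finite dimensional. This gives a countable spanning set, so $\dim\mathcal{L}_g(M)$ is countable. The same count applies to each homogeneous subspace.

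The main obstacle is Step 1. The inclusion $M\subseteq N_0^*$ sits inside the full restricted dual, which is generally much larger than $M$. The closure of $W$ under the coaction therefore has to come honestly from the twisted Jacobi identity. This requires tracking the factor $(\frac{z_1-z_0}{z_2})^{\frac{-\ln g}{2\pi i}}$ and pairing only with finitely many homogeneous $u^*,v^*$ at a time, so that all the series truncate. I would first try to extract closure by taking $\Res_{z_0}z_0^{n}$ of (\ref{JIM3.4}) for large $n$ and expressing the left iterate through $(\Yup(z_0)\otimes\Id)\Yup_{\mathcal N'}(z_2)m$.
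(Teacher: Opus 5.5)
Your proof is correct in outline, but it is organized differently from the paper's. The paper builds the filtration $F^0(M)=M\subseteq F^1(M)\subseteq\cdots$, where $F^{k+1}(M)$ is spanned by the second tensor factors of the $\Delta_i(m)$, $m\in F^k(M)$. The union $F$ is a subcomodule for trivial reasons and has countable dimension, so $\mathcal W_g(M)=F$ costs nothing. The work goes into $\mathcal W_g(M)_0=M$, which the paper proves by induction on $k$. Its base case is exactly your Step 2: zero modes on $L_{g^{-1}}(M^*)_0=M^*$ are dual to $\Delta_M$ via $\Phi_g$. Its inductive step uses weak coassociativity to rewrite $(\Id\otimes\Delta_i)\circ\Delta_p(x)$ through the $(\Delta_j\otimes\Id)\circ\Delta_l(x)$. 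You instead use weak coassociativity once, up front, to show that $F^1(M)$ is already closed, i.e. $\mathcal W_g(M)=\mathrm{span}\{(v^*\otimes\Id)\Delta_{\mathcal N',k}(m)\}$. This is the comodule analogue of the fact that the submodule generated by a vector $w$ is spanned by the $u_nw$. The degree-zero statement then needs only the base case, and countability is read off the spanning set. The analytic input is the same in both (the paper's inductive step is your closure claim, applied repeatedly). Your version removes the induction and gives an explicit description of $\mathcal W_g(M)$.

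Two corrections to your plan for Step 1. First, $\Res_{z_0}z_0^n$ with $n$ large kills the right-hand side of (\ref{JIM3.4}), because $(u^*\otimes v^*,\Yup(z_0)\,\cdot\,)$ is bounded below in $z_0$. What it yields is weak cocommutativity, Proposition \ref{WC-WA}(i), not the relation you need. To express the product through $(\Yup(z_0)\otimes\Id)\circ\Yup_{\mathcal N'}(z_2)m$ you must kill the $\tau$-term instead. Multiply by $z_1^{n}$, with the twist $z_1^{\frac{\ln g}{2\pi i}}$ on the first factor, and take $\Res_{z_1}$; this gives Proposition \ref{WC-WA}(ii). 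Then cancel $(z_0+z_2)^q$ and substitute $z_0=z_1-z_2$.

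Second, fixing homogeneous $u^*,v^*$ is not enough for truncation. Paired with $u^*\otimes v^*\otimes n$, $n\in\mathcal N$, the $\tau$-term is $(m,Y(v^*,z_2)Y(u^*,z_1)n)$. So the required exponent depends on $u^*$ and on the degree of $n$, and the coefficients of the product occupy unboundedly many degrees of $\mathcal N'$. Work one coefficient at a time: for fixed modes and homogeneous $u^*,v^*$ the coefficient lies in a single $N_d^*$, and on $N_d$ the exponent can be chosen uniformly.

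Finally, a small point: not every subcomodule of an admissible comodule is graded. For example, a diagonal copy of $\mathcal X$ inside $\mathcal X\oplus\mathcal X[1]$, with the second summand's grading shifted, is not. What you need is that $\mathcal J$ is graded. This is implicit in the definition of $\mathcal L_g(M)$, and it is also what makes a maximal such $\mathcal J$ exist.
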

\proof Define $F^0(M)=M$, suppose we have already defined $F^k(M)$, define $F^{k+1}(M)$ to be the space spanned by the second tensor factors of all $$\Delta_i(m),i\in\mathbb{Z},m\in F^k(M).$$
From properties of $c$, we can see that $F^k(M)\subseteq F^{k+1}(M)$ for all $k$. Hence, we get the following filtration $$F^0(M)\subseteq F^1(M)\subseteq\cdots\subseteq F^k(M)\subseteq\cdots.$$
Since $\Delta_i(m)$ is a finite sum, if $\dim M$ is countable, using induction, we can see that $\dim F^k(M)$ is countable for all $k.$

Now set $F=\cup_kF^k(M)$, we have $\dim F$ is countable. Since $L_{g^{-1}}(M^*)'$ is an admissible $g$-twisted $V$-comodule, it is obvious that $F$ is also an admissible $g$-twisted $V$-comodule, such that $M\subseteq F\subseteq\mathcal {W}_g(M).$ Hence, $\mathcal {W}_g(M)=F$ and $\dim\mathcal {W}_g(M)$ is countable. Now $\mathcal {L}_g(M)$ is a quotient comodule of $\mathcal {W}_g(M)$, it is obvious that $\dim\mathcal {L}_g(M)$ is countable.

To show $\dim\mathcal {L}_g(M)_0=M,$ it is enough to show that $\mathcal {W}_g(M)_0=M$, which is equivalently to show for any $m\in\mathcal {W}_g(M)$, when the second tensor factors of $\Delta_i(m)$ belong to $\mathcal {W}_g(M)_0$ for any $i\in\mathbb{Z}$, they belong to $M$. Using above filtration, we prove that $\forall m\in F^k(M)$, when the second tensor factors of $\Delta_i(m)$ belong to $\mathcal {W}_g(M)_0$, they belong to $M$. By linearity, we can assume $m$ is homogeneous.

Induction on $k$. If $k=0$, for any $m\in M$, then $$\Delta_i(m)=\sum m'\otimes m''\in \oplus_r(V^{(r)}\otimes\mathcal {W}_g(M))_{i+1+\frac{r}{T}}.$$ If $m''\in\mathcal {W}_g(M)_0$, then $r=0,~m'\in V_{i+1}$. Hence for any $v^*\in V_{i+1}^{*,(0)},m^*\in M^*$, we have
  \begin{eqnarray*}
   &&(\Delta_i(m),v^*\otimes m^*)=(m,(v^*\otimes t^i)m^*)\\
   &=&(m,\varrho_{M^*}(v^*\otimes m^*))=(\Delta_M(m),v^*\otimes m^*).
  \end{eqnarray*}
The first identity follows from $(V'^{,(r)},V^{(s)})=0$ for $r\neq s.$ Since $(M,\Delta_M)$ is a $C_g(V)$-comodule, if $m'\in V_{i+1}$, then $m''\in M.$

Suppose it is true for $m\in F^k(M)$. Now let $m\in F^{k+1}(M).$ By definition, there are $m_s'\in V,m_s''\in \mathcal {W}_g(M)$ with $m_0''=m$ such that $\sum m_s'\otimes m_s''=\Delta_p(x)$ for some $p\in\mathbb{Z}, x\in F^k(M)$. Suppose $m_s'$ are linear independent, let $f\in V'$ be the dual element of $m_0'$, then we have
  \begin{eqnarray*}
   \Delta_i(m)&=&\Delta_i(\sum f(m_s')m_s'')=(\Id\otimes\Delta_i)\circ(f\otimes \Id)\circ\Delta_p(x)\\
   &=&(f\otimes \Id\otimes\Id)\circ(\Id\otimes \Delta_i)\circ\Delta_p(x).
  \end{eqnarray*}
Since $\mathcal {W}_g(M)$ is an admissible $g$-twisted $V$-comodule, by Proposition \ref{WC-WA}, we have the following weak coassociativity
\begin{eqnarray*}
&&(z_0+z_2)^q(u^*\otimes v^*\otimes m^*,((z_2+z_0)^{\frac{\ln g}{2\pi i}}\otimes \Id\otimes\Id)\circ(\Yup (z_0)\otimes \Id)\circ \Yup_\mathcal {M} (z_2)m)\\
&=&(z_0+z_2)^q(u^*\otimes v^*\otimes m^*,((z_2+z_0)^{\frac{\ln g}{2\pi i}}\otimes \Id\otimes\Id)(\Id\otimes \Yup_\mathcal {M} (z_2))\circ \Yup_\mathcal {M} (z_0+z_2)m).
\end{eqnarray*}
for some $q\in\mathbb{N}$, and all $u^*,v^*\in V', m^*\in \mathcal {W}_g(M)'$. Hence we can see that $(\Id\otimes \Delta_i)\circ\Delta_p(x)$ is a linear combination of $(\Delta_j\otimes \Id)\circ\Delta_l(x)$. By induction, we can see that if the second tensor factors of $\Delta_l(x)$ belong to $\mathcal {W}_g(M)_0$, they belong to $M$. Hence the statement is also true for $\Delta_i(m)$. Thus, we get $\mathcal {W}_g(M)_0=M.$ This means $\mathcal {L}_g(M)_0=M$ by definition.

This completes the proof.                                                            $\hfill\Box$

\begin{Proposition}
$\mathcal {L}_g$ is a functor from $C_g(V)$-comodules category to admissible $g$-twisted $V$-comodules category such that $\Omega_g\circ\mathcal {L}_g=\Id.$ Furthermore, $\mathcal {L}_g$ sends simple objects to simple objects.
\end{Proposition}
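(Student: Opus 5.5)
We need to establish three things: $\mathcal{L}_g$ is well defined on morphisms, $\Omega_g\circ\mathcal{L}_g=\Id$, and $\mathcal{L}_g$ sends simple comodules to simple comodules.

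\emph{Functoriality.} Let $\psi:M^1\to M^2$ be a $C_g(V)$-comodule homomorphism. Then $\psi^*:(M^2)^*\to (M^1)^*$ is a $C_g(V)^*$-module map, and hence, via $\Phi_g$ of Proposition \ref{Prop5.2A-C}, an $A_{g^{-1}}(V')$-module map. By the universal property of the induced modules $U_{g^{-1}}(\cdot)$ in Dong--Li--Mason's construction, $\psi^*$ extends to a $\widehat{V'}[g^{-1}]$-module map $U_{g^{-1}}((M^2)^*)\to U_{g^{-1}}((M^1)^*)$. We check that this map carries $J^2$ into $J^1$ by pairing with $M^{**}$, so it descends to $L_{g^{-1}}((M^2)^*)\to L_{g^{-1}}((M^1)^*)$. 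Taking graded duals gives, by Proposition \ref{Prop3.7}, a $g$-twisted $V$-comodule map $L_{g^{-1}}((M^1)^*)'\to L_{g^{-1}}((M^2)^*)'$ extending $\psi$ on $M^1\subseteq L_{g^{-1}}((M^1)^*)'$.

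Since the image of a subcomodule is a subcomodule, this map sends $\mathcal{W}_g(M^1)$ into $\mathcal{W}_g(M^2)$. It remains to see that it sends $\mathcal{J}^1$ into $\mathcal{J}^2$. The map preserves the gradation, and $\mathcal{J}^i$ is characterized degreewise: it is the largest subcomodule with $(\mathcal{J}^i)_0=0$, using $\mathcal{W}_g(M^i)_0=M^i$ from Lemma \ref{Lm6.1}. Hence the image of $\mathcal{J}^1$ is a subcomodule with zero degree-$0$ part, so it lies in $\mathcal{J}^2$. This yields $\mathcal{L}_g(\psi)$, and compatibility with composition and identities is immediate.

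\emph{$\Omega_g\circ\mathcal{L}_g=\Id$.} Lemma \ref{Lm6.1} gives $\mathcal{L}_g(M)_0=M$ as vector spaces. We must also check that the $C_g(V)$-comodule structure $\Delta_{M_0}$ of Theorem \ref{CM0} on this degree-$0$ space agrees with $\Delta_M$. Pair $\Delta_{M_0}(m)$ with $v^*\otimes m^*$ for $v^*\in V'^{,(0)}$ homogeneous. As in the $k=0$ step of Lemma \ref{Lm6.1}, this equals $(m,(v^*\otimes t^{\wt v^*-1})m^*)=(m,\varrho_{M^*}(\Phi_g(v^*)\otimes m^*))=(\Delta_M(m),v^*\otimes m^*)$. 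Since $\Phi_g$ is surjective and $\Delta_M(m)\in C_g(V)\otimes M$, these pairings determine $\Delta_M(m)$, so the two structures agree. On morphisms, $\mathcal{L}_g(\psi)$ restricts to $\psi$ in degree $0$ by construction.

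\emph{Simplicity.} Let $M$ be simple and let $\mathcal{N}\subseteq\mathcal{L}_g(M)$ be a nonzero admissible subcomodule. Then $\mathcal{N}_0\subseteq M$ is a $C_g(V)$-subcomodule by Theorem \ref{CM0}, so either $\mathcal{N}_0=0$ or $\mathcal{N}_0=M$.
\begin{itemize}
\item If $\mathcal{N}_0=0$, the preimage of $\mathcal{N}$ in $\mathcal{W}_g(M)$ is a subcomodule meeting $M$ trivially and properly containing $\mathcal{J}$. This contradicts the maximality of $\mathcal{J}$.
\item If $\mathcal{N}_0=M$, then the preimage contains $M$, hence contains $\mathcal{W}_g(M)$ since $\mathcal{W}_g(M)$ is generated by $M$. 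So $\mathcal{N}=\mathcal{L}_g(M)$.
\end{itemize}
Hence $\mathcal{L}_g(M)$ is simple.

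\emph{Main obstacle.} The delicate step is well-definedness of $\mathcal{J}$ and the use of a maximal subcomodule meeting $M$ trivially. This requires that the sum of all subcomodules meeting $M$ trivially still meets $M$ trivially. I would argue it through the grading: since $M=\mathcal{W}_g(M)_0$, a graded subcomodule meets $M$ trivially exactly when its degree-$0$ part vanishes, and this property is preserved under sums of graded subcomodules. So the first thing to verify is that subcomodules are graded, using the $L(0)$-eigenspace description together with admissibility.
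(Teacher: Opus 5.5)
Your proposal is correct and follows the paper's route: dualize $\psi$, transport it through $\Phi_g$ and Dong--Li--Mason's $L_{g^{-1}}(\cdot)$, dualize back and restrict to $\mathcal{L}_g(M)$, and get simplicity from $\mathcal{L}_g(M)_0=M$, the simplicity of $M$ and the maximality of $\mathcal{J}$. Along the way you supply checks the paper leaves implicit: that $J^2$ maps into $J^1$ and $\mathcal{J}^1$ into $\mathcal{J}^2$, and that the degree-$0$ comodule structure coincides with $\Delta_M$.

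One caution on your closing remark: you do not need to show that all subcomodules are graded, and the $L(0)$-eigenspace argument would not give it in general, since an admissible grading need not be the $L(0)$-eigenspace grading. It suffices to take $\mathcal{J}$ maximal among admissible (graded) subcomodules, which is the setting of the paper's category and where your sum argument already works.
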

\proof Let $\psi:M^1\rightarrow M^2$ be a $C_g(V)$-comodule homomorphism, then $\psi^*:(M^2)^*\rightarrow (M^1)^*$ is a $C_g(V)^*$-module homomorphism. By Proposition \ref{Prop5.2A-C}, $\psi^*$ is also an $A_{g^{-1}}(V')$-module homomorphism. By Dong-Li-Mason's theory, $\psi^*$ induces an admissible $g^{-1}$-twisted $V'$-module homomorphism from $L_{g^{-1}}((M^2)^*)$ to $L_{g^{-1}}((M^1)^*)$, still denoted by $\psi^*.$ Now, $\psi^*$ induces an admissible $g$-twisted $V$-comodule homomorphism $\psi^{**}:L_{g^{-1}}((M^1)^*)'\rightarrow L_{g^{-1}}((M^2)^*)'$ and $\psi^{**}|_{M^1_0}=\psi.$ Define $\mathcal {L}_g(\psi)=\psi^{**}|_{\mathcal {L}_{g^{-1}}(M^1)}$, we get $\mathcal {L}_g(\psi):\mathcal {L}_g(M^1)\rightarrow\mathcal {L}_g(M^2)$ is an admissible $g$-twisted $V$-comodule homomorphism. This proves $\mathcal {L}_g$ is a functor.

$\Omega_g\circ\mathcal {L}_g=\Id$ is obvious by definitions.

Now let $M$ be a simple $C_g(V)$-comodule. If $\mathcal {L}_g(M)$ is not simple, then it has a sub admissible $g$-twisted $V$-comodule $\mathcal {M}=\oplus_{i\geq0}M_i$ such that $M_0\neq0$, Now $M_0$ is a sub $C_g(V)$-comodule of $M$. Hence $M_0=M$. This implies $\mathcal {M}=\mathcal {L}_g(M)$ and $\mathcal {L}_g(M)$ is a simple admissible $g$-twisted $V$-comodule.                                               $\hfill\Box$

\begin{Theorem}
If $V$ is a $g$-corational graded vertex operator coalgebra and $g\in\Aut V$ a finite order automorphism of order $T$ such that $\dim C_g(V)<\infty$, then $C_g(V)$ is a cosemisimple coassociative coalgebra.
\end{Theorem}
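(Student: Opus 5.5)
We reduce to Lemma \ref{coss}: it suffices to show that every $C_g(V)$-comodule is cosemisimple. Since every comodule over a coalgebra is the sum of its finite-dimensional subcomodules (local finiteness), and a comodule that is a sum of simple subcomodules is a direct sum of some of them, it is enough to show that every finite-dimensional $C_g(V)$-comodule $M$ is cosemisimple. Equivalently, every subcomodule $N\subseteq M$ should have a complement.

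Fix a finite-dimensional $C_g(V)$-comodule $M$. First construct the admissible $g$-twisted $V$-comodule $\mathcal{W}_g(M)\subseteq L_{g^{-1}}(M^*)'$ from Section 6; by Lemma \ref{Lm6.1} (the second one) it satisfies $\mathcal{W}_g(M)_0=M$. By $g$-corationality, $\mathcal{W}_g(M)=\bigoplus_{j}\mathcal{M}^j$ with each $\mathcal{M}^j$ an irreducible admissible $g$-twisted $V$-comodule. The weight-zero part of this decomposition gives $M=\bigoplus_j M^j_0$, and by Theorem \ref{CM0} each summand $M^j_0$ is a $C_g(V)$-subcomodule. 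Here the comodule structure induced by $\Omega_g$ on $\mathcal{W}_g(M)_0$ agrees with the original $\Delta_M$: the computation in the base case of Lemma \ref{Lm6.1} identifies $\Delta_{M_0}$ with $\Delta_M$ under the pairing with $M^*$.

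It remains to show that each nonzero $M^j_0$ is a simple $C_g(V)$-comodule. Let $0\neq N\subseteq M^j_0$ be a subcomodule. The $g$-twisted subcomodule of $\mathcal{M}^j$ generated by $N$ is nonzero, so by irreducibility it equals $\mathcal{M}^j$. Its weight-zero part must then be all of $M^j_0$. The filtration argument of Lemma \ref{Lm6.1} gives exactly this: with $M$ replaced by $N$, inside $\mathcal{M}^j$ the degree-zero part of the generated subcomodule is $N$ itself. Hence $N=M^j_0$. Therefore $M$ is a direct sum of simple comodules, and Lemma \ref{coss} yields that $C_g(V)$ is cosemisimple.

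The main obstacle is this last step: the degree-zero part of the subcomodule generated by $N$ must be $N$ itself. The argument of Lemma \ref{Lm6.1} uses weak coassociativity (Proposition \ref{WC-WA}) and the fact that $N$ is closed under $\Delta_M$, so it should transfer verbatim once the sub-comodule $N\subseteq M_0^j$ replaces $M$. I would verify this first; the rest of the proof is formal.
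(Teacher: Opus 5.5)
Your proposal is correct and follows the paper's route. You realize $M$ as the degree-zero part of an admissible $g$-twisted $V$-comodule built inside $L_{g^{-1}}(M^*)'$, apply $g$-corationality, and read off a direct-sum decomposition of $M$ in degree zero before invoking Lemma~\ref{coss}. Beyond what the paper writes, you work with $\mathcal{W}_g(M)$ for finite-dimensional $M$ rather than $\mathcal{L}_g(M)$, and you make explicit a step the paper uses tacitly: the degree-zero part of each irreducible summand is a simple $C_g(V)$-comodule.

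That step does go through, but its base case is not literally the one in the filtration lemma of Section 6, since a general summand has no $M^*$-pairing. Use instead the computation in the proof of Theorem~\ref{CM0}: it shows $\Delta_{M_0}(n)$ is exactly the sum over $i$ of the degree-zero (in the second factor) components of $\Delta_{\mathcal{M},i}(n)$, with the $i$-th term in $V_{i+1}\otimes M_0$. So $\Delta_{M_0}(N)\subseteq C_g(V)\otimes N$ forces each of those components into $V\otimes N$, and the weak-coassociativity induction then runs unchanged.
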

\proof Let $M$ be a $C_g(V)$-comodule, then there is an admissible $g$-twisted $V$-comodule $\mathcal {L}_g(M)$ such that $\mathcal {L}_g(M)_0=M$. If $V$ is $g$-corational, then $\mathcal {L}_g(M)$ is cosemisimple. Hence $\mathcal {L}_g(M)_0$ is a cosemisimple $C_g(V)$-comodule, i.e., $M$ is a cosemisimple $C_g(V)$-comodule. This implies $C_g(V)$ is cosemisimple.                                 $\hfill\Box$

\begin{Corollary}
If $V$ is a $g$-corational graded vertex operator coalgebra and $g\in\Aut V$ a finite order automorphism of order $T$ such that $\dim C_g(V)<\infty$, then $V$ has only finitely many inequivalent irreducible admissible $g$-twisted $V$-comodules.
\end{Corollary}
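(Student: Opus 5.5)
The plan is to combine the preceding theorem with the bijection between simple objects given by the functors $\Omega_g$ and $\mathcal{L}_g$. Because $V$ is $g$-corational and $\dim C_g(V)<\infty$, the preceding theorem makes $C_g(V)$ a cosemisimple coassociative coalgebra. A finite dimensional cosemisimple coalgebra is a finite direct sum of simple subcoalgebras $C_1\oplus\cdots\oplus C_n$. Each simple subcoalgebra $C_j$ has, up to isomorphism, exactly one simple comodule, and every simple $C_g(V)$-comodule is a simple $C_j$-comodule for some $j$. This follows dually from the Wedderburn decomposition of the finite dimensional semisimple algebra $C_g(V)^*$ via Proposition \ref{A-C}(iii). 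Consequently $C_g(V)$ has only finitely many isomorphism classes of simple comodules, at most $n$ of them.

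Next I will construct an injective map from isomorphism classes of irreducible admissible $g$-twisted $V$-comodules to isomorphism classes of simple $C_g(V)$-comodules. Let $\mathcal{M}$ be irreducible, regraded by shifting so that $M_0\neq0$. By Theorem \ref{CM0}, $\Omega_g(\mathcal{M})=M_0$ is a $C_g(V)$-comodule.

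I claim $M_0$ is simple. Let $N\subseteq M_0$ be a nonzero subcomodule. Consider the subcomodule of $\mathcal{M}$ generated by $N$, built as in Lemma \ref{Lm6.1} by the filtration $F^k(N)$. By irreducibility it equals $\mathcal{M}$. The inductive argument of Lemma \ref{Lm6.1}, which uses weak coassociativity from Proposition \ref{WC-WA} and the comodule identity on degree $0$, shows its degree $0$ part is $N$. Hence $N=M_0$.

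For injectivity, suppose $M_0\cong M_0'$ for two irreducible comodules $\mathcal{M}$ and $\mathcal{M}'$. Dualize using Proposition \ref{Prop3.7}(ii): $\mathcal{M}'$ and $(\mathcal{M}')'$ become admissible $g^{-1}$-twisted $V'$-modules. Their degree $0$ parts are the $A_{g^{-1}}(V')$-modules $M_0^*\cong (M_0')^*$, where the module structure comes through $\Phi_g$ of Proposition \ref{Prop5.2A-C}. Both dual modules are irreducible, since submodules of the dual correspond to subcomodules under the graded duality. By Dong--Li--Mason theory, an irreducible admissible $g^{-1}$-twisted module is determined by its lowest weight space. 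Thus the two duals are isomorphic, and dualizing back gives $\mathcal{M}\cong\mathcal{M}'$. Alternatively, one could show $\mathcal{M}\cong\mathcal{L}_g(M_0)$ directly, using the universal property of $\mathcal{W}_g$ and $\mathcal{L}_g$.

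The finiteness of simple $C_g(V)$-comodules then bounds the number of irreducible admissible $g$-twisted $V$-comodules. I expect the main obstacle to be the injectivity step, in particular checking that irreducibility of $\mathcal{M}$ passes to irreducibility of $\mathcal{M}'$. This requires the graded pieces to be finite dimensional, or a separate argument. If that fails, I would instead prove $\mathcal{M}\cong\mathcal{L}_g(M_0)$ directly, via the natural map $\mathcal{M}\to L_{g^{-1}}(M_0^*)'$ dual to $L_{g^{-1}}(M_0^*)\to\mathcal{M}'$.
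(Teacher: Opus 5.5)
Your overall route is the one the paper intends. The paper states this corollary without proof, as a consequence of the preceding theorem together with the $\Omega_g$/$\mathcal L_g$ correspondence. You also supply two facts that the paper's proposition on $\mathcal L_g$ does not literally give: that $\Omega_g$ sends an irreducible $\mathcal M$ to a simple $C_g(V)$-comodule, and that $\mathcal M$ is recovered from $M_0$.

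Your first injectivity argument, however, does genuinely fail in general. The problem is the claim that $\mathcal M'$ is irreducible. Since $M_0$ is finite dimensional, the submodule of $\mathcal M'$ generated by $M_0^*$ has countable-dimensional homogeneous pieces. But $M_t^*$ has uncountable dimension whenever $M_t$ is infinite dimensional, and in that case $\mathcal M'$ is reducible. The hypotheses give you no control over $\dim M_t$ for $t>0$.

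So make your fallback the actual argument. It goes through with one extra observation:
\begin{itemize}
\item The universal property of the induced module gives a map $U_{g^{-1}}(M_0^*)\to\mathcal M'$ that is the identity in degree $0$. The image $K$ of $J$ is then a graded $V'$-submodule with $K_0=0$.
\item The graded annihilator of $K$ in $\mathcal M$ is closed under the coaction, because $(v^*\otimes\kappa,\Delta_k(m))$ is a pairing of $m$ with $v^*_{k'}\kappa\in K$. It therefore contains $M_0$, so it equals $\mathcal M$ by irreducibility. Hence $K=0$, and the map factors through $L_{g^{-1}}(M_0^*)$.
\item The dual map $\mathcal M\to L_{g^{-1}}(M_0^*)'$ is a comodule homomorphism which in degree $0$ is the canonical injection $M_0\to M_0^{**}$. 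Its kernel is therefore a subcomodule meeting $M_0$ trivially, hence zero.
\item Its image is irreducible and contains $M_0$, so it equals $\mathcal W_g(M_0)$. Then $\mathcal J=0$ and $\mathcal M\cong\mathcal L_g(M_0)$.
\end{itemize}

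Finally, $g$-corationality is not needed for the conclusion. The condition $\dim C_g(V)<\infty$ already gives finitely many simple $C_g(V)$-comodules: they are finite dimensional and correspond to simple modules of the finite dimensional algebra $C_g(V)^*$. So your argument proves the corollary without that hypothesis.
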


\section{High level coassociative coalgebras}
In this section, we introduce the higher level coassociative coalgebras $C^k_g(V)$ for all $k\in\frac{1}{T}\mathbb{N}$, where $V$ is a graded vertex operator coalgebra and $g\in\Aut V$ is a finite order automorphism of order $T$. For $k\in\frac{1}{T}\mathbb{N}$, there are $l\in\mathbb{N},i=0,1,...,T-1$ such that $k=l+\frac{i}{T}$.

\begin{Definition}
Let $V$ be a graded vertex operator coalgebra and $g\in\Aut V$ a finite order automorphism of order $T$. For $i,r=0,1,...,T-1,$ set
\begin{equation*}
  \left\{ \begin{aligned}
           &\delta_i(r)=1,~i\geq r, \\
           &\delta_i(r)=0,~i<r, \\
           &\delta_i(T)=1. \\
                            \end{aligned} \right.
                            \end{equation*}

Let $k=l+\frac{i}{T}$ with $l\in\mathbb{N},i=0,1,...,T-1$, define
\begin{equation*}
C^k_g(V)=\{v\in V|L(1)v+L(0)v=0,\Res_z(\frac{(1+z)^{L(0)-1+\delta_i(\frac{T\ln g}{2\pi i})+l+\frac{\ln g}{2\pi i}}}{z^{2l+\delta_i(\frac{T\ln g}{2\pi i})+\delta_i(T-\frac{T\ln g}{2\pi i})}}\otimes \Id)\circ\Yup(z)v=0\}.
\end{equation*}

For $v\in C^k_g(V)$, define
\begin{equation*}
\Delta^k_g(v)=\sum_{j=0}^l(-1)^j\tbinom{j+l}{l}\Res_z(\frac{(1+z)^{L(0)+l}}{z^{l+j+1}}\otimes\Id)\circ\Yup(z)v.
\end{equation*}
\end{Definition}

\begin{Remark}
If $g=\Id$, then $T=r=i=0$. In this case, we have $C^k_g(V)$ is the untwisted coassociative coalgebra $C^k(V)$ defined in \cite{W}.
\end{Remark}

\begin{Lemma}\label{Lm7.3}
Suppose $\dim C_g^k(V)<\infty$. Let $v\in C_g^k(V).$ Suppose $s\geq t\geq0$, we have $$\Res_z(\frac{(1+z)^{L(0)-1+l+\delta_i(\frac{T\ln g}{2\pi i})+\frac{\ln g}{2\pi i}+t}}{z^{2l+\delta_i(\frac{T\ln g}{2\pi i})+\delta_i(T-\frac{T\ln g}{2\pi i})+s}}\otimes\Id)\circ\Yup(z)v=0.$$
\end{Lemma}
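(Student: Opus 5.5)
The plan is to copy the argument of Lemma \ref{Lm4.7}, now at level $k=l+\frac{i}{T}$. For brevity write
\[
A=L(0)-1+l+\delta_i(\tfrac{T\ln g}{2\pi i})+\tfrac{\ln g}{2\pi i},\qquad
B=2l+\delta_i(\tfrac{T\ln g}{2\pi i})+\delta_i(T-\tfrac{T\ln g}{2\pi i}).
\]
All of $L(0)$, $\frac{\ln g}{2\pi i}$ and the two $\delta_i(\cdot)$ operators are diagonal on the common eigenspaces $V^{(r)}_s$. Moreover $L(1)$ commutes with $g$ by Proposition \ref{Prop3.4}, and $L(1)$ lowers weight by one. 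So every operator exponent below acts by a scalar on each eigenspace, and we may compute on each $V^{(r)}_s$ separately.

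\textbf{Reduction to $t=0$.} As in Lemma \ref{Lm4.7}, expand
\[
(1+z)^{A+t}=\sum_{j=0}^{t}\tbinom{t}{j}z^j(1+z)^A .
\]
This turns the claimed identity for a pair $(s,t)$ into a linear combination of the identities for the pairs $(s-j,0)$ with $0\le j\le t$. Since $s\ge t$, each $s-j\ge0$, so it suffices to prove that
\[
\Res_z\Bigl(\frac{(1+z)^{A}}{z^{B+s}}\otimes\Id\Bigr)\circ\Yup(z)v=0\quad\text{for all } s\ge0 .
\]

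\textbf{Induction on $s$.} The case $s=0$ is exactly the defining condition of $C^k_g(V)$. For the inductive step, assume the identity holds for $s$ and apply $L(1)\otimes\Id$ to it.

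First, $L(1)$ can be moved past $(1+z)^A/z^{B+s}$. Doing so shifts $L(0)$ in the exponent by $+1$, so the exponent becomes $A+1$; the twist and $\delta_i$ terms are unchanged, because $L(1)$ preserves each $V^{(r)}$.

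Next, use the $L(1)$-derivation property (\ref{L(1D)3.6}) to replace $(L(1)\otimes\Id)\circ\Yup(z)$ by $\frac{d}{dz}\Yup(z)$. Integrating by parts with $\Res_z f'g=-\Res_z fg'$ gives
\[
0=-\Res_z\Bigl((A+1)\frac{(1+z)^{A}}{z^{B+s}}-(B+s)\frac{(1+z)^{A+1}}{z^{B+s+1}}\Bigr)\circ\Yup(z)v .
\]
Now write $(1+z)^{A+1}=(1+z)^A+z(1+z)^A$. The first term, and the $z(1+z)^A$ part of the second term, both reduce to the inductive hypothesis at level $s$, so they vanish. What remains is
\[
(B+s)\Res_z\Bigl(\frac{(1+z)^{A}}{z^{B+s+1}}\otimes\Id\Bigr)\circ\Yup(z)v=0 .
\]

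\textbf{Main obstacle.} The step needing care is dividing by the scalar $B+s$, which acts diagonally on each eigenspace. We need it to be nonzero on every eigenspace. This is immediate when $l\ge1$ or $s\ge1$. In the remaining case $l=s=0$, $B$ can vanish on the eigenspaces where both $\delta_i$-terms are $0$. There I would fall back on the extra hypothesis $L(1)v+L(0)v=0$ in the definition of $C^k_g(V)$, arguing exactly as in the proof of Lemma \ref{Lm4.4} by pairing with $\Id\otimes c$ to handle the low-order term directly. I would also double-check that commuting $L(1)$ past the $\delta_i$ projections really is harmless, since Remark \ref{Rmk2.1} places both tensor factors in twisted eigenspaces. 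The assumption $\dim C^k_g(V)<\infty$ plays no role in this argument.
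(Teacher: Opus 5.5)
\textbf{There is a genuine gap in the degenerate case you flag, and it cannot be closed, because the statement itself fails there.}

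Your reduction to $t=0$ and the $L(1)$-induction are exactly what the paper intends; its proof only says ``similar to Lemma \ref{Lm4.7}''. On every eigenspace $V^{(r)}$ where the scalar $B+s$ is nonzero, your argument is correct.

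The problem is the case you isolate: $l=0$, $s=0$, and $r\neq0$ with $i<r$ and $i<T-r$. This includes every $r\neq0$ when $k=0$. In that case $B=0$ on $V^{(r)}$, and your proposed repair does not work:
\begin{itemize}
\item Pairing with $\Id\otimes c$ recovers only one vector from an identity in $V\otimes V$.
\item That vector is forced, exactly as in the proof of Lemma \ref{Lm4.3}. Since $(\Id\otimes c)\Yup(z)v\in V[[z]]$ has constant term $v$, applying $\Id\otimes c$ to the $V^{(r)}$-component of the $s=1$ expression gives $\Res_z\frac{(1+z)^{A}}{z}\,\delta_r(\Id\otimes c)\Yup(z)v=\delta_r v$.
\item So the $s=1$ case of the lemma would force $\delta_r v=0$ for every $v\in C^k_g(V)$. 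Nothing in the definition gives this. With the factor $z^{0}$, the defining condition on that component becomes vacuous after $\Id\otimes c$. The hypothesis $L(1)v+L(0)v=0$ does not separate $g$-eigencomponents, since $L(0)$ and $L(1)$ commute with $g$.
\end{itemize}

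\textbf{Counterexample.} Let $V=\mathbb C a\oplus\mathbb C b$ be concentrated in weight $0$, the dual of $\mathbb C[\epsilon]/(\epsilon^2)$. Set $\Yup(z)=\Delta_{-1}$ with $\Delta_{-1}(a)=a\otimes a$ and $\Delta_{-1}(b)=b\otimes a+a\otimes b$. Put $c(a)=1$, $c(b)=0$, $\rho=0$, $g(a)=a$, $g(b)=-b$, so $T=2$. The axioms are quick to check. The Jacobi identity reduces to coassociativity plus cocommutativity, and all $L(n)$ vanish with $d=0$. Nothing in Lemma \ref{Lm7.3} excludes $\rho=0$.

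Take $k=0$. The defining factor is $(1+z)^{L(0)}z^{-2}$ on $V^{(0)}=\mathbb C a$ and $(1+z)^{L(0)-1/2}z^{0}$ on $V^{(1)}=\mathbb C b$. Every defining residue vanishes, so $C^0_g(V)=V$, which is $2$-dimensional. But for $v=b$, $s=1$, $t=0$, the expression in the lemma is $\Res_z\frac{(1+z)^{-1/2}}{z}\,b\otimes a+\Res_z z^{-3}\,a\otimes b=b\otimes a\neq0$.

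\textbf{Consequence for the paper.} Its one-line proof has the same hole as your argument. In Lemma \ref{Lm4.7} the corresponding coefficient is $1+\delta_0+l\geq1$, but here the $z$-exponent can be $0$. At $k=0$ the formula does not even reproduce the exponent $1+\delta_0$ used to define $C_g(V)$, so $C^0_g(V)=C_g(V)$ also fails in this example. Your induction goes through verbatim as soon as the $z$-exponent is at least $1$ on every eigenspace. As printed, the lemma needs a corrected definition of $C^k_g(V)$, not a cleverer proof.
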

\proof This is similar to the proof of Lemma \ref{Lm4.7}, we omit it.
  $\hfill\Box$

\begin{Lemma}\cite{W}
For any $v\in C(V^g)$, we have $L(1)v+L(0)v=0.$
\end{Lemma}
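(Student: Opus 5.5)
The statement is the graded-vertex-operator-coalgebra analogue of the fact that $L(-1)$ lies in $O(\mathcal{V})$. I would prove it by the same computation as Lemma \ref{Lm4.4}, with $g=\Id$ and $V^g$ in place of $V$. Let $v\in C(V^g)$, so that
$$\Res_z\Big(\frac{(1+z)^{L(0)}}{z^2}\otimes\Id\Big)\circ\Yup_{V^g}(z)v=0 .$$
Apply $\Id\otimes c_{V^g}$ to this identity. Since $c_{V^g}$ only touches the second tensor factor, it commutes with the operator $\frac{(1+z)^{L(0)}}{z^2}\otimes\Id$ acting on the first factor. Hence
$$0=\Res_z\frac{(1+z)^{L(0)}}{z^2}\circ(\Id\otimes c_{V^g})\circ\Yup_{V^g}(z)v .$$

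Next expand $(1+z)^{L(0)}=\sum_{j\geq0}\binom{L(0)}{j}z^j$. By axiom (\ref{CI3.2}), $(\Id\otimes c)\circ\Yup(z)v\in V[[z]]$, so only the coefficients $\Delta_{-1}$ and $\Delta_{-2}$ can contribute to the residue. We get
$$0=(\Id\otimes c)\Delta_{-2}(v)+(L(0)\otimes c)\Delta_{-1}(v).$$
The limit axiom (\ref{CI3.3}) gives $(\Id\otimes c)\Delta_{-1}(v)=v$. The derivative axiom (\ref{L(1D)3.6}), compared coefficientwise, gives $(L(1)\otimes\Id)\Delta_{-1}=\Delta_{-2}$; this is the coefficient of $z^0$ in $\frac{d}{dz}\Yup(z)$. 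Substituting these, and pulling $L(1)$ and $L(0)$ out of the first factor because $c$ acts on the second, yields $0=L(1)v+L(0)v$.

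The only point needing care is the identification of the coefficients. First, $\Delta_{-2}$ is the coefficient of $z^{1}$ in $\Yup(z)=\sum\Delta_k z^{-k-1}$, which is what pairs with $z^{-2}$ under $\Res_z$. Second, the $z^0$ coefficient of $\frac{d}{dz}\Yup(z)$ is exactly $\Delta_{-2}$, with coefficient $1$. Finally, all of this must be carried out inside the quotient $V^g$, which is legitimate because $V^g$ is itself a graded vertex operator coalgebra by Proposition \ref{Prop3.4}. There is no real obstacle: the argument is the $\delta_0=\Id$, $\ln g=0$ specialization of the proof of Lemma \ref{Lm4.4}, which is exactly the untwisted statement of \cite{W}.
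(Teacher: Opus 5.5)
Your proof is correct and is essentially the paper's approach. The paper gives no separate proof of this lemma and just cites the untwisted result. Your computation (apply $\Id\otimes c$, keep only the $\Delta_{-1}$ and $\Delta_{-2}$ terms, and use $\Delta_{-2}=(L(1)\otimes\Id)\circ\Delta_{-1}$ from the $L(1)$-derivative axiom) is exactly the one the paper carries out in the proof of Lemma \ref{Lm4.4}, specialized to $g=\Id$ on the graded vertex operator coalgebra $V^g$.
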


\begin{Proposition}
Suppose $\dim C_g^k(V)<\infty$ for all $k\in\frac{1}{T}\mathbb{N}$.

(i) If $k=0$, then $C_g^0(V)=C_g(V)$.

(ii) We have the following filtration of coassociative coalgebras $$C_g^0(V)\subseteq C_g^{\frac{1}{T}}(V)\subseteq\cdots\subseteq C_g^k(V)\subseteq\cdots.$$
\end{Proposition}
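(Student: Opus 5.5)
For (i) we specialize the definition of $C^k_g(V)$ to $k=0$, i.e. $l=0$ and $i=0$. We evaluate the operator-valued exponents eigenspace by eigenspace on $V^{(r)}$. There $\frac{T\ln g}{2\pi i}$ acts as $r$, so $\delta_0(\frac{T\ln g}{2\pi i})$ acts as $\delta_0(r)$, which is $1$ if $r=0$ and $0$ otherwise; this is exactly the operator $\delta_0$ of Section 4. Similarly $\delta_0(T-\frac{T\ln g}{2\pi i})$ acts as $\delta_0(T-r)$. This equals $\delta_0(T)=1$ when $r=0$, and $0$ when $1\le r\le T-1$.

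Hence the defining residue of $C^0_g(V)$ becomes
\[
\Res_z\Bigl(\frac{(1+z)^{L(0)-1+\delta_0+\frac{\ln g}{2\pi i}}}{z^{2\delta_0}}\otimes\Id\Bigr)\circ\Yup(z)v .
\]
This agrees with \eqref{def} except that the power of $z$ is $2\delta_0$ instead of $1+\delta_0$. The two coincide on $V^{(0)}$ but differ on $V^{(r)}$ for $r\neq 0$. I would double-check this against Lemma \ref{4.6}: the $r\neq0$ components there carry $z^{-1}$, i.e. exponent $1=1+\delta_0$. If the intended convention for $\delta_i(T-\cdot)$ makes that exponent $1$, the residue conditions agree outright. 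Otherwise I would argue directly via Lemma \ref{4.6} that the $(\delta_r\otimes\delta_{-r})$ components impose the same conditions.

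The extra condition $L(1)v+L(0)v=0$ in $C^0_g(V)$ is automatic on $C_g(V)$. By Lemma \ref{Lm4.3}, $C_g(V)\subseteq V^{(0)}$, and Lemma \ref{Lm4.4} then gives $L(1)v+L(0)v=0$. Conversely, every $v\in C^0_g(V)$ lies in $V^{(0)}$ by the same argument as Lemma \ref{Lm4.3}: apply $\Id\otimes c$ and use $c(V^{(r)})=0$. The coproducts also agree. For $l=0$ the formula for $\Delta^0_g$ reduces to $\Res_z(\frac{(1+z)^{L(0)}}{z}\otimes\Id)\circ\Yup(z)v$. Its $\delta_0$-projection is $\Delta_g$, and Lemma \ref{Lm4.9} shows $\Delta_g(v)\in C_g(V)\otimes C_g(V)\subseteq V^{(0)}\otimes V$. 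I expect the non-$\delta_0$ part to vanish by the $V^{(r)}\otimes V^{(-r)}$ analysis together with Lemma \ref{Lm4.7}, possibly requiring the convention that $\Delta^k_g$ carries a $\delta_0$ as in \eqref{cop}. This proves (i).

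For (ii) I would show $C^k_g(V)\subseteq C^{k+\frac1T}_g(V)$. Write $k=l+\frac iT$. If $i<T-1$, then $k+\frac1T=l+\frac{i+1}{T}$. If $i=T-1$, then $k+\frac1T=(l+1)+0$. The $L(1)+L(0)$ condition is common to both sets. It remains to show that the defining residue for the larger level is obtained from that of level $k$. On each $V^{(r)}$, the new exponents $(a',b')$ of $(1+z)$ and $z^{-1}$ satisfy $b'-b\ge a'-a\ge 0$, where $(a,b)$ are the level-$k$ exponents. Lemma \ref{Lm7.3} (with $t=a'-a$, $s=b'-b$), applied after projecting with $\delta_r\otimes\delta_{-r}$ as in the proof of Lemma \ref{Lm4.9}, then gives vanishing.

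The casework is as follows. If $i<T-1$, only the components with $r=i+1$ change: $\delta_{i+1}(r)$ becomes $1$ and $\delta_{i+1}(T-r)$ may change, giving $(t,s)$ equal to $(1,1)$ or $(1,2)$ or $(0,\cdot)$. If $i=T-1\to l+1$, both $l$-shifts give $t=1$, $s=2$, corrected by the $\delta$ changes. This bookkeeping is the main obstacle, especially where $s<t$ threatens. There I would instead fall back on the $L(1)$-derivation argument of Lemma \ref{Lm4.7}, using $L(1)v+L(0)v=0$ to absorb the boundary term.

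Finally, the inclusions should be maps of coalgebras, i.e. $\Delta^{k'}_g|_{C^k_g}=\Delta^k_g$. When $l$ is unchanged this holds by definition. When $l\to l+1$, I would expand the difference using $(1+z)^{L(0)+l+1}=(1+z)^{L(0)+l}(1+z)$ and the binomial identity $\binom{j+l+1}{l+1}=\binom{j+l}{l}+\binom{j+l}{l+1}$. This telescopes the difference into residues killed by Lemma \ref{Lm7.3}, mirroring the untwisted argument in \cite{W}.
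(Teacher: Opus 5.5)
\textbf{Part (i).} Your main line is, in substance, the paper's proof. The paper only shows that the extra condition $L(1)v+L(0)v=0$ holds automatically on $C_g(V)$. It tacitly treats the residue conditions of $C^0_g(V)$ and $C_g(V)$ as the same. It gets that identity by pushing $v$ into $C(V^g)$ with $\pi$ and quoting the corresponding lemma of \cite{W}. Your appeal to Lemmas \ref{Lm4.3} and \ref{Lm4.4} is more direct, since Lemma \ref{Lm4.4} is exactly that statement.

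Both routes need $C_g(V)\subseteq V^{(0)}$, and Lemma \ref{Lm4.3} alone only excludes pure eigenvectors. Add that $C_g(V)$ is $g$-stable: the map in \eqref{def} intertwines $g$ with $g\otimes g$, because its operator is a function of $L(0)$ and $g$. Hence $C_g(V)=\bigoplus_r\bigl(C_g(V)\cap V^{(r)}\bigr)$.

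\textbf{The exponent discrepancy.} You are right that the printed definition gives $z^{2\delta_0}$, i.e.\ exponent $0$ on $V^{(r)}$ for $r\neq0$, and the paper's proof passes over this. But the fallbacks you sketch for the literal reading would not work. You should fix the convention (exponent $1+\delta_0$ at level $0$, as in \eqref{def}) rather than hedge.

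\begin{itemize}
\item The Lemma \ref{Lm4.3} argument yields nothing. After applying $\Id\otimes c$ you get $\Res_z(1+z)^{L(0)-1+\frac rT}(\Id\otimes c)\circ\Yup(z)v$. This vanishes automatically, because $(\Id\otimes c)\circ\Yup(z)v\in V[[z]]$.
\item The $z^{0}$-condition cannot produce the $z^{-1}$-condition on the $\delta_r\otimes\delta_{-r}$ components. Put $X_j=\Res_z(\frac{(1+z)^{A}}{z^{B+j}}\otimes\Id)\circ\Yup(z)v$, with $A,B$ the exponent operators on the first factor. The $L(1)$-derivation computation of Lemma \ref{Lm4.7} reads $(L(1)\otimes\Id)X_j=(B+j)X_{j+1}+(B+j-A-1)X_j$. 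So $X_0=0$ only gives $BX_1=0$, and $B=0$ there.
\item The coproduct has the same obstruction. On $\delta_r\otimes\delta_{-r}$ components, $(1+z)^{L(0)}$ differs from $(1+z)^{L(0)-1+\frac rT}$ by the non-integral power $1-\frac rT$. Lemma \ref{Lm4.7} only handles integral shifts $k\le l$, so nothing kills the non-$\delta_0$ part of $\Delta^0_g(v)$. You must insert $\delta_0$ into $\Delta^k_g$ as in \eqref{cop}.
\end{itemize}

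\textbf{Part (ii).} Your route genuinely differs from the paper's. The paper maps $C^k_g(V)$, $k=l+\frac iT$, into $C^l(V^g)$ by $\pi$ and quotes the untwisted filtration of \cite{W}. This gives coalgebra compatibility for free; in effect it uses the $\delta_0$-version, since $\pi\otimes\pi$ kills the other components. But it cannot compare $C^{l+\frac iT}_g(V)$ with $C^{l+\frac{i+1}T}_g(V)$, because all of them land in the same $C^l(V^g)$. Your direct comparison supplies exactly this.

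Your bookkeeping closes, and no case with $s<t$ occurs:
\begin{itemize}
\item For $i<T-1$: $(t_0,s_0)=(0,0)$. For $r\neq0$, $t_r=1$ if $r=i+1$ and $t_r=0$ otherwise; $s_r=t_r+1$ if $r=T-i-1$ and $s_r=t_r$ otherwise.
\item For $i=T-1$: $(t_0,s_0)=(1,2)$, and $(t_r,s_r)=(0,0)$ for $r\neq0$.
\end{itemize}
Applying $\delta_r\otimes\Id$ to Lemma \ref{Lm7.3} then gives each component, without needing $v\in V^{(0)}$.

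Two caveats remain.
\begin{itemize}
\item Your telescoping for $\Delta^{l+1}_g=\Delta^l_g$ works only on the $r=0$ component, for the non-integrality reason above. Again, insert $\delta_0$.
\item The recursion behind Lemma \ref{Lm7.3} needs $B+j$ to be invertible. At $l=0$ this fails on the $V^{(r)}$ with $i<r<T-i$, for example at $r=1$ in the step $0\to\frac1T$. So those steps rest on the same convention fix. Adding $1$ to the $z$-exponent on every $V^{(r)}$ with $r\neq0$, uniformly in $k$, makes level $0$ agree with \eqref{def}. It also leaves your $(t_r,s_r)$ unchanged and makes every $B$ positive.
\end{itemize}
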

\proof (i) It is enough to prove for any $v\in C_g(V)$, $L(1)v+L(0)v=0.$

Suppose $L(1)v+L(0)v\neq0.$ Since $C_g(V)\nsubseteq\ker\pi,$ we have $v\notin\ker\pi=\oplus_{r=1}^{T-1}V^{(r)}$. This implies $L(1)v+L(0)v\notin\oplus_{r=1}^{T-1}V^{(r)}=\ker\pi.$ By Proposition \ref{Prop4.1} and above Lemma, we have$$\pi(L(1)v+L(0)v)=(L(1)+L(0))\pi(v)=0.$$
This is a contradiction. Hence $L(1)v+L(0)v=0.$

(ii) Recall from \cite{W}, there is a filtration of coassociative coalgebras $$C^0(V^g)\subseteq C^1(V^g)\subseteq\cdots.$$Following the proofs of Lemma \ref{Lm4.3}, using Lemma \ref{Lm7.3}, it is easy to see that for $k=l+\frac{i}{T}$ with $i=0,1,...,T-1$, $C_g^k(V)$ are sub coassociative coalgebras of $C^l(V^g)$. Then it is trivial to get the desired filtration.
                 $\hfill\Box$

\begin{Proposition}
Suppose $\dim C_g^k(V)<\infty$.
Let $(\mathcal {M},\Yup_\mathcal {M}(z))$ be an admissible $g$-twisted $V$-comodule. Then $\Omega_g^k(\mathcal {M})=\oplus_{i=0}^kM_i$ is a $C_g^k(V)$-comodule with comodule structure defined as $$\Delta_{\Omega_g^k(\mathcal {M})}m=\Res_z(z^{L(0)-1}\otimes \Id)\circ\Yup_\mathcal {M}(z)m,$$where $m\in\Omega_g^k(\mathcal {M}).$ Furthermore, $\Omega_g^k$ is a functor from admissible $g$-twisted $V$-comodules category to $C_g^k(V)$-comodules category.
\end{Proposition}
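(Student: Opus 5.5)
The argument copies the level-zero proof of Theorem \ref{CM0}: first reduce to the untwisted higher level theory of \cite{W} through the fixed points quotient $V^g$, then use the twisted Jacobi identity (\ref{JIM3.4}) to control the components lying in $V^{(r)}\otimes V^{(-r)}$ with $r\neq 0$. Throughout, write $k=l+\frac{i}{T}$.

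\emph{Step 1: untwisted structure.} By Proposition \ref{Prop3.9}, $\mathcal{M}$ is an admissible $V^g$-comodule via $(\pi\otimes\Id)\circ\Yup_\mathcal{M}(z)$. By \cite{W}, $\oplus_{j\le l}M_j$, and more generally each truncation $\oplus_{j\le k}M_j$ (the grading of $\mathcal M$ is in $\frac1T\mathbb N$), is a $C^l(V^g)$-comodule under $\Res_z(z^{L(0)-1}\otimes\Id)\circ\Yup_{V^g,\mathcal M}(z)$. In the proof of the filtration Proposition, $C^k_g(V)$ is a sub coassociative coalgebra of $C^l(V^g)$, embedded via $\pi$. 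It therefore suffices to show that $\Delta_{\Omega^k_g(\mathcal M)}(m)$ lies in $C^k_g(V)\otimes\Omega^k_g(\mathcal M)$; coassociativity and counit then follow automatically.

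\emph{Step 2: grading.} For homogeneous $m\in M_t$ with $t\le k$, expand $\Yup_\mathcal M(z)m$ using the admissibility grading, as in Lemma \ref{Lm5.1} and Theorem \ref{CM0}. Taking $\Res_z z^{L(0)-1}$ picks out the terms $v'\otimes m''$ with $v'\in V^{(r)}$. Here $m''$ has weight $t-\frac{r}{T}$ shifted by an integer, and it lands in $\Omega^k_g(\mathcal M)$. The weight $\wt v'$ is forced by the exponent matching. Since $\pi$ restricted to $V^{(0)}$ is injective, the untwisted conditions from \cite{W}, transported back through $\pi$, give the $\delta_0\otimes\delta_0$-component of the defining condition of $C^k_g(V)$. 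The same untwisted result gives $L(1)v+L(0)v=0$ on first factors; this uses the Lemma from \cite{W} quoted above together with the argument of the filtration Proposition (i).

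\emph{Step 3: the twisted components (main obstacle).} Split the defining residue condition of $C^k_g(V)$ into components $\delta_r\otimes\delta_{-r}$, as in Lemma \ref{4.6}. For the components with $r\neq0$, the exponent of $z$ in the denominator is $2l+\delta_i(r)+\delta_i(T-r)$, and it depends on whether $r\le i$. The plan is to apply
\[
\Res_{z_0,z_1}(\delta_r\otimes\delta_{-r}\otimes\Id)\circ z_0^{-2l-\epsilon}\bigl(z_1^{L(0)-1+l+\delta_i(r)+\frac{r}{T}}\otimes\Id\otimes\Id\bigr)
\]
to (\ref{JIM3.4}), where $\epsilon=\delta_i(r)+\delta_i(T-r)$. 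Then show that the first term on the left vanishes, by a higher level analogue of Lemma \ref{Lm5.1}: a power of $z_1$ below $-1-l$, applied to vectors of weight at most $k$, annihilates because the weights of the $\mathcal M$-components would be negative. Next apply $\Res_{z_2}(\Id\otimes z_2^{L(0)-\frac{r}{T}+\cdots}\otimes\Id)$ and expand $(z_2+z_0)^{\cdots}$ binomially. This should reproduce the $r$-component of the residue condition applied to $(\Yup(z)\otimes\Id)\circ\Delta_{\Omega^k_g(\mathcal M)}(m)$, just as at the end of the proof of Theorem \ref{CM0}. The delicate point is the exact threshold: the vanishing of the first term requires $t+\frac{r}{T}$-type bounds, and it is precisely these that produce the $\delta_i(r)$ and $\delta_i(T-r)$ corrections. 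I would check this case split ($r\le i$ versus $r>i$) carefully, possibly using Lemma \ref{Lm7.3} to absorb extra powers.

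\emph{Step 4: functoriality.} For a homomorphism $\psi$, the restriction $\psi|_{\Omega^k_g}$ preserves the grading and commutes with $\Yup_\mathcal M$. Hence it is a $C^l(V^g)$-comodule map by \cite{W}, and therefore a $C^k_g(V)$-comodule map, as in Proposition \ref{P4.3}.
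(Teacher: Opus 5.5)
Your plan is the one the paper intends (its proof just says the argument is similar to Section 5). You handle the $r=0$ components through $V^g$ and the untwisted level-$l$ theory, and the components $\delta_r\otimes\delta_{-r}$, $r\neq0$, through the twisted Jacobi identity as in Theorem~\ref{CM0}. The gap is the threshold check you deferred in Step 3, and that check fails.

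\emph{Your vanishing lemma has the wrong bound.} The computation in Lemma~\ref{Lm5.1} shows that for $m\in M_t$,
$\Res_z(z^{L(0)+p}\otimes\Id)\circ\Yup_{\mathcal M}(z)m\in V\otimes M_{t+1+p}$.
So it vanishes only when $p<-1-t$. A bound in terms of $l$ says nothing for $l<t\le k$.

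\emph{With your exponents the first term survives.} Take $N=2l+\delta_i(r)+\delta_i(T-r)$ and $z_1^{L(0)-1+l+\delta_i(r)+\frac rT}$, and expand $(z_1-z_2)^{-N}$ in nonnegative powers of $z_2$. The intermediate vector in $(\Id\otimes\Yup_{\mathcal M}(z_2))\circ\Yup_{\mathcal M}(z_1)m$ then lies in $M_{t-l-\delta_i(T-r)+\frac rT-j}$ for $j\ge0$. At $t=k=l+\frac iT$ and $j=0$ this index is $\frac{i+r}{T}-\delta_i(T-r)\ge0$, so this term does not vanish.

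The $\tau$-term is harmless. After $\Res_{z_2}(\Id\otimes z_2^{L(0)+N-1-a}\otimes\Id)$, where $a=l+\delta_i(r)+\frac rT$, its intermediate vector lies in $M_{t-a-j}$. This is $0$ for all $t\le k$, and it is where $\delta_i(r)$ is needed.

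\emph{More careful bookkeeping cannot fix this.} The printed exponent is one too small on $V^{(r)}$, $r\neq0$. At $k=0$ it gives $z^{0}$ instead of the $z^{1}$ in the definition of $C_g(V)$, so the paper's claim $C^0_g(V)=C_g(V)$, which you use in Step 1, also fails. Here is a concrete instance:
\begin{itemize}
\item Let $V$ be the graded dual of the rank-one Heisenberg VOA with $\langle h,h\rangle=1$, and let $g=-1$.
\item Pair the $r=1$ condition with $h\otimes h$. For every $v\in C^0_g(V)$ this gives $(\Res_z(1+z)^{1/2}Y(h,z)h,v)=\tfrac12c(v)=0$.
\item But $(c\otimes\Id)\circ\Delta_{\Omega^0_g(\mathcal M)}(m)=m$.
\item Hence for the dual of the twisted Fock space and any $0\neq m\in M_0$, $\Delta_{\Omega^0_g(\mathcal M)}(m)\notin C^0_g(V)\otimes M_0$. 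This is exactly the inclusion Step 3 is meant to prove.
\end{itemize}

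\emph{What the first term actually requires.} You need $N>k+a$. For $r\neq0$ this means $N=a+b=2l+1+\delta_i(r)+\delta_i(T-r)$, where $b=l+\delta_i(T-r)+\frac{T-r}{T}$ is the least element of $\frac{T-r}{T}+\mathbb Z$ exceeding $k$. For $r=0$ the printed $2l+2$ is already correct. With this corrected exponent the first-term index becomes $t-b-j<0$, both terms vanish, and your binomial expansion of the right-hand side finishes as in Theorem~\ref{CM0}.

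A minor point on Step 2: the residue forces $r=0$. So $m''\in M_t$ exactly, and the coaction preserves each $M_t$.
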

\proof  This is similar to section 5, we omit it.                                       $\hfill\Box$

\begin{Proposition}\label{Prop7.4}
Let $V$ be a graded vertex operator coalgebra and $g\in\Aut V$ a finite order automorphism of order $T$. Suppose $\dim C_g^k(V)<\infty$.

(i) There is a well-defined surjective homomorphism $\Phi_g^k:A_{k,g^{-1}}(V')\rightarrow C_g^k(V)^*$ of algebras, where $A_{k,g^{-1}}(V')$ is the higher level $g^{-1}$-twisted Zhu algebra of vertex operator algebra $V'$.

(ii) Suppose $\dim A_{k,g^{-1}}(V')<\infty$, there is a well-defined epimorphism $\Psi_g^k:C^k(V)\rightarrow A_{k,g^{-1}}(V')^*$ of coalgebras.

(iii) If $A_{k,g^{-1}}(V')$ is finite-dimensional, we have $\Phi_g^k,\Psi_g^k$ are isomorphisms.
\end{Proposition}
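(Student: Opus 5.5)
The plan is to mirror the proof of Proposition \ref{Prop5.2A-C}, with $C_g(V)$, $O_{g^{-1}}(V')$ and Lemma \ref{Lm6.1} replaced by their higher level analogues. Recall from \cite{DLM3} that $A_{k,g^{-1}}(V')=V'/O_{k,g^{-1}}(V')$. Here $O_{k,g^{-1}}(V')$ is spanned by $L(-1)u^*+L(0)u^*$ and by the elements
\[
\Res_z\frac{(1+z)^{\wt u^*-1+\delta_i(r)+l+\frac{r}{T}}}{z^{2l+\delta_i(r)+\delta_i(T-r)}}Y(u^*,z)v^*,
\]
for $u^*\in V'^{,(r)}$ homogeneous. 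The product is $u^**^k_{g^{-1}}v^*=\sum_{j=0}^l(-1)^j\binom{j+l}{l}\Res_z\frac{(1+z)^{\wt u^*+l}}{z^{l+j+1}}Y(u^*,z)v^*$ when $r=0$, and $0$ otherwise.

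The first step is the analogue of Lemma \ref{Lm6.1}: $v\in C^k_g(V)$ if and only if $(v^*,v)=0$ for all $v^*\in O_{k,g^{-1}}(V')$.
\begin{itemize}
\item For the residue generators, pair against $v$. Using Proposition \ref{VC-VA}(ii), the pairing with $u^*\otimes v^*$ moves onto $\Yup(z)v$. Since $(V'^{,(r)},V^{(s)})=0$ for $r\ne s$ and $g^{-1}$ acts on $V'$ by the inverse eigenvalues, the exponent $\frac{r}{T}$ on the algebra side matches $\frac{\ln g}{2\pi i}$ on the coalgebra side. The indicator $\delta_i(r)$ matches $\delta_i(\frac{T\ln g}{2\pi i})$ in the same way.
\item For the generators $L(-1)u^*+L(0)u^*$, dualize: $L(-1)$ on $V'$ is the transpose of $L(1)$ on $V$, and $L(0)$ is self-transpose. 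So these pair to zero against $v$ exactly when $L(1)v+L(0)v=0$.
\item The converse is by nondegeneracy of the pairing $V'\otimes V'$ against $V\otimes V$ in each degree, as in Lemma \ref{Lm6.1}.
\end{itemize}

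For (i), the inclusion $C^k_g(V)\subseteq V$ gives the restriction map $\phi^k_g:V'\to C^k_g(V)^*$. By the first step this kills $O_{k,g^{-1}}(V')$, so it descends to $\Phi^k_g$ on $A_{k,g^{-1}}(V')$. Multiplicativity is the computation $(\Phi^k_g(u^**^k_{g^{-1}}v^*),v)=(u^*\otimes v^*,\Delta^k_g(v))$. This holds because the pairing of $Y(u^*,z)v^*$ with $v$ is the pairing of $u^*\otimes v^*$ with $\Yup(z)v$, and the coefficients in $\Delta^k_g$ coincide with those in $*^k_{g^{-1}}$. For $u^*\notin V'^{,(0)}$ both sides vanish: the algebra side by definition, and the coalgebra side because $\Delta^k_g(v)$ lies in $V^{(0)}\otimes V$, which follows from Remark \ref{Rmk2.1} together with Lemma \ref{Lm4.3}-type arguments placing $C^k_g(V)$ inside $V^{(0)}$. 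Here I use that $C^k_g(V)$ is a coalgebra under $\Delta^k_g$, as in the preceding proposition (which realizes it as a subcoalgebra of $C^l(V^g)$). For surjectivity, $\dim C^k_g(V)<\infty$ gives $C^k_g(V)\subseteq\oplus_{j\le N}V_j$ for some $N$. Every functional on a subspace of this finite-dimensional space extends to an element of $\oplus_{j\le N}V_j^*\subseteq V'$.

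For (ii), the first step identifies $C^k_g(V)$, viewed inside $(V')^*$ via $V\hookrightarrow V''$, with elements annihilating $O_{k,g^{-1}}(V')$. This gives $\Psi^k_g:C^k_g(V)\to A_{k,g^{-1}}(V')^*$, injective on $C^k_g(V)$. Compatibility with coproducts and counits is the transpose of the computation in (i). Surjectivity comes from the analogue of Lemma \ref{Lm5.3}. Finite dimensionality of $A_{k,g^{-1}}(V')$ forces $V_j^*\subseteq O_{k,g^{-1}}(V')$ for $j\gg0$, so the annihilator of $O_{k,g^{-1}}(V')$ in $(V')^*$ sits in finitely many degrees, hence inside $V$, hence in $C^k_g(V)$.

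For (iii), $\Psi^k_g$ is then bijective and $(\Psi^k_g)^*=\Phi^k_g$ under $A^{**}=A$, so both are isomorphisms. The main obstacle is the bookkeeping in the first step: checking that the exponents $\delta_i(r)$, $\delta_i(T-r)$ and the $\pm\frac{r}{T}$ shifts exactly transpose under $g\mapsto g^{-1}$. The subtle case is $r=0$, where $\delta_i(T)=1$ must reproduce the exponent $2l+1+1$ of the untwisted level-$l$ relation. I would check this case by case, comparing with the explicit definition of $O_{k,g^{-1}}$ in \cite{DLM3}.
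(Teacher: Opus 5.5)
Your outline is the one the paper intends (its proof only says the argument is similar to Section 6). Several steps are fine:
the duality $C^k_g(V)=O_{k,g^{-1}}(V')^{\perp}\cap V$, with $L(-1)$ on $V'$ the transpose of $L(1)$ handling the generators $L(-1)u^*+L(0)u^*$;
the descent and surjectivity of $\Phi^k_g$;
the well-definedness and injectivity of $\Psi^k_g$.

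\textbf{The gap: surjectivity of $\Psi^k_g$ in (ii).} You claim that finite dimensionality of $A_{k,g^{-1}}(V')$ forces $V_j^*\subseteq O_{k,g^{-1}}(V')$ for $j\gg0$. This does not follow, because $O_{k,g^{-1}}(V')$ is not a graded subspace. A generator $\Res_z\frac{(1+z)^{a}}{z^{N}}Y(u^*,z)v^*=\sum_{j\geq0}\binom{a}{j}u^*_{j-N}v^*$ has components in several weights, so finite codimension says nothing about the individual $V'_j$.

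In fact $\Psi^k_g$ is onto exactly when every functional on $V'$ vanishing on $O_{k,g^{-1}}(V')$ is finitely supported. This already fails for $g=\Id$, $k=0$, $V=L(\frac12,0)'$.
Here $A(V')\cong\C^3$, and $A(V')^*$ is spanned by the characters $\chi_h$, $h\in\{0,\frac12,\frac1{16}\}$. The character $\chi_h$ records the scalar by which $o(u)$ acts on the lowest weight space of $L(\frac12,h)$.
Since $\omega*u\equiv L(-2)u-L(0)u$ modulo $O(V')$, one gets $\chi_h(L(-2)u)=(h+\wt u)\chi_h(u)$. Hence $\chi_h(L(-2)^n\1)=\prod_{m=0}^{n-1}(h+2m)$.
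For $h=\frac12,\frac1{16}$ this never vanishes. The successive ratios $h+2n$ differ for the two values of $h$, so no nonzero combination of $\chi_{1/2},\chi_{1/16}$ vanishes on $V'_{2n}$ for all large $n$.
Thus $C(V)=\C\chi_0=V_0$ is one-dimensional while $A(V')$ is three-dimensional. So $\Psi$ is not onto, $\Phi$ has a two-dimensional kernel, and (ii)--(iii) fail in general. The paper's argument has the same gap: it is exactly the inference made in the proof of Lemma \ref{Lm5.3}.

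\textbf{What survives.} After one repair (below), your argument shows that $\Psi^k_g$ is an injective coalgebra map and $\Phi^k_g=(\Psi^k_g)^*$ is a surjective algebra map. When $\dim A_{k,g^{-1}}(V')<\infty$, both are bijective precisely when $O_{k,g^{-1}}(V')\supseteq\bigoplus_{j>N}V'_j$ for some $N$. That hypothesis has to be added to (ii)--(iii).

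\textbf{The repair: the case $u^*\notin V'^{,(0)}$ in (i).} Remark \ref{Rmk2.1} does not give $\Delta^k_g(v)\in V^{(0)}\otimes V$. For $v\in V^{(0)}$ it only gives $\Delta_n(v)\in\bigoplus_sV^{(s)}\otimes V^{(-s)}$.
With $\Delta^k_g$ as written, the cross components pair with $u^*\otimes v^*$ to the untruncated product expression, which need not lie in the relevant $O$. This happens already for the level-zero algebra $A_\theta$ of the rank-one Heisenberg VOA with $\theta\colon h\mapsto-h$. There $h\in O_\theta$ (as $h\circ_\theta\1=h$), but $h_{-1}h=2\omega\notin O_\theta$, since it acts by $\frac18$ on the bottom level of the $\theta$-twisted module.
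You should put $\delta_0$ in front of $(1+z)^{L(0)+l}$ in $\Delta^k_g$, as in $\Delta_g$. This is dual to the DLM convention $u^**v^*=0$ for $u^*\notin V'^{,(0)}$, and it makes the required vanishing hold by definition.
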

\proof  This is similar to section 6, we omit it.                                       $\hfill\Box$

\begin{Proposition}
Suppose $\dim C_g^k(V)<\infty$ for all $k\in\frac{1}{T}\mathbb{N}$.
Let $M$ be a $C_g^k(V)$-comodule which is not a $C^{k-\frac{1}{T}}(V)$-comodule. Then there is an admissible $g$-twisted $V$-comodule $\mathcal {L}_g^k(M)$ such that $\Omega_g^k/\Omega_g^{k-\frac{1}{T}}\circ\mathcal {L}_g^k(M)=M.$

Furthermore, $\mathcal {L}_g^k$ is a functor from the category of $C_g^k(V)$-comodules which are not $C_g^{k-\frac{1}{T}}(V)$-comodules to admissible $g$-twisted $V$-comodules category such that $\Omega_g^k/\Omega_g^{k-\frac{1}{T}}\circ\mathcal {L}_g^k=\Id$, it also sends simple objects to simple objects.
\end{Proposition}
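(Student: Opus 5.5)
The plan is to repeat the level-zero construction of Section 6, with the twisted Zhu algebra replaced by its higher-level version and the generalized Verma module of Dong--Li--Mason used in place of $L_{g^{-1}}(M^*)$.

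Let $M$ be a $C_g^k(V)$-comodule which is not a $C_g^{k-\frac{1}{T}}(V)$-comodule. The finite dimensionality hypothesis makes $C_g^k(V)^*$ an associative algebra, and Proposition \ref{Prop7.4} makes $M^*$ a module over $A_{k,g^{-1}}(V')$. We can take $\dim A_{k,g^{-1}}(V')<\infty$ so that $\Phi_g^k$ is an isomorphism; failing that, we pull the module back along the surjection $\Phi_g^k$. Dually, $M^*$ is not annihilated by the kernel of $A_{k,g^{-1}}(V')\to A_{k-\frac{1}{T},g^{-1}}(V')$, since the dual of the inclusion $C_g^{k-\frac{1}{T}}(V)\subseteq C_g^k(V)$ is this projection.

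By the higher level twisted Zhu theory of \cite{DLM3}, such a module $M^*$ gives an admissible $g^{-1}$-twisted $V'$-module $L^k_{g^{-1}}(M^*)$ whose degree-$k$ piece is $M^*$. It is built from $\mathcal{U}(\widehat{V'}[g^{-1}])\otimes M^*$, with $M^*$ placed in degree $k$, and then divided by the radical with respect to $M^{**}$. The condition that $M^*$ is not an $A_{k-\frac{1}{T},g^{-1}}(V')$-module is exactly what keeps $M^*$ from being killed in the quotient and forces it to sit in the top new degree. Proposition \ref{Prop3.7}(i) then shows that $L^k_{g^{-1}}(M^*)'$ is an admissible $g$-twisted $V$-comodule, using $V''=V$. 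Through the injection $M\to M^{**}\to L^k_{g^{-1}}(M^*)'$ we regard $M$ as a subspace of degree $k$. Define $\mathcal{W}^k_g(M)$ to be the sub comodule generated by $M$, and $\mathcal{L}^k_g(M)=\mathcal{W}^k_g(M)/\mathcal{J}$, where $\mathcal{J}$ is the maximal sub comodule meeting $M$ trivially.

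The main obstacle is showing that $\Omega_g^k/\Omega_g^{k-\frac{1}{T}}(\mathcal{L}^k_g(M))=M$, i.e. that the degree-$k$ part of $\mathcal{W}^k_g(M)$ is exactly $M$. For this I would copy the filtration argument of Lemma \ref{Lm6.1}. Set $F^0=M$ and let $F^{j+1}$ be spanned by the second tensor factors of the elements $\Delta_i(m)$ with $m\in F^j$; then $F=\bigcup_j F^j$. At the base step, degree-$k$ components are identified with the coaction of $M$ through the pairing $(\Delta_i(m),v^*\otimes m^*)=(m,(v^*\otimes t^i)m^*)$, now using the $A_{k,g^{-1}}(V')$-action $v^*\otimes t^{\wt v^*-1}$ restricted to degree $k$. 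The induction step uses the weak coassociativity of Proposition \ref{WC-WA}. The new point, which needs care, is that terms landing in degrees below $k$ could also contribute. These are controlled by Lemma \ref{Lm7.3}, together with the fact that $\Delta^k_g$ (the binomial sum) is precisely the projection onto the degree $k$ to degree $k$ component.

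Functoriality then follows as in Section 6. A comodule map $\psi$ dualizes to an $A_{k,g^{-1}}(V')$-module map $\psi^*$, which extends to a map of the modules $L^k_{g^{-1}}$ and then dualizes again; this gives $\mathcal{L}^k_g(\psi)$, and $\Omega_g^k/\Omega_g^{k-\frac{1}{T}}\circ\mathcal{L}^k_g=\Id$ holds by construction. For simplicity, take a nonzero sub comodule $\mathcal{M}$ of $\mathcal{L}^k_g(M)$ with $M$ simple. Its degree-$k$ part is a subcomodule of $M$, hence is $0$ or $M$. It cannot be $0$, because then $\mathcal{M}$ would meet $M$ trivially and, by maximality of $\mathcal{J}$, would be zero in the quotient. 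So it is $M$, and since $M$ generates $\mathcal{L}^k_g(M)$ we get $\mathcal{M}=\mathcal{L}^k_g(M)$.
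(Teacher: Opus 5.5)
Your proposal is correct and follows the paper's route. The paper's proof only says ``similar to Section 6,'' and you carry out that level-$k$ analogue: DLM's higher-level module $L^k_{g^{-1}}(M^*)$, its graded dual via Proposition \ref{Prop3.7}, the subcomodule $\mathcal{W}^k_g(M)$ generated by $M$ in degree $k$, the quotient by the maximal subcomodule meeting $M$ trivially, and the Section 6 filtration and weak-coassociativity induction. Your worry about lower-degree terms is unnecessary, though: that induction only tracks degree-$k$ components of the last tensor factor, so it transfers verbatim and needs neither Lemma \ref{Lm7.3} nor any ``projection'' property of $\Delta^k_g$.
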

\proof  This is similar to section 6, we omit it.                                       $\hfill\Box$

\begin{Theorem}\label{7.5}
If $V$ is a $g$-corational graded vertex operator coalgebra and $g\in\Aut V$ a finite order automorphism of order $T$ such that $\dim C_g^k(V)<\infty$, then $C_g^k(V)$ is a cosemisimple coassociative coalgebra.
\end{Theorem}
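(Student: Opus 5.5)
The argument will follow the level-zero theorem at the end of Section 6, now using the higher level functors $\Omega_g^k$ and $\mathcal {L}_g^k$ in place of $\Omega_g$ and $\mathcal {L}_g$. By Lemma \ref{coss}, it suffices to show that every $C_g^k(V)$-comodule $M$ is cosemisimple. I would first reduce to finite dimensional $M$: every comodule is the union of its finite dimensional subcomodules, and a comodule that is a sum of simple subcomodules is cosemisimple. I would also use the filtration $C_g^0(V)\subseteq C_g^{\frac{1}{T}}(V)\subseteq\cdots\subseteq C_g^k(V)$ and argue by induction on $k$. The base case $k=0$ is the theorem already proved in Section 6.

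For the inductive step, let $M$ be a $C_g^k(V)$-comodule. Write $M=M^{\prime}\oplus M^{\prime\prime}$, where $M^{\prime}$ is the largest subcomodule that is a $C_g^{k-\frac{1}{T}}(V)$-comodule. By induction, $M^{\prime}$ is cosemisimple. It remains to handle the part on which $\mathcal {L}_g^k$ applies, namely comodules that are not $C_g^{k-\frac{1}{T}}(V)$-comodules. For such a part, the preceding proposition gives an admissible $g$-twisted $V$-comodule $\mathcal {L}_g^k(M)$ with $\Omega_g^k/\Omega_g^{k-\frac{1}{T}}\circ\mathcal {L}_g^k(M)=M$. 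Since $V$ is $g$-corational, $\mathcal {L}_g^k(M)=\oplus_j\mathcal {M}^j$ is a direct sum of irreducible admissible $g$-twisted comodules. Applying the functor $\Omega_g^k$, or rather its graded pieces, which respect direct sums, gives $M\cong\oplus_j\mathcal {M}^j_{k}$, where $\mathcal {M}^j_k$ denotes the degree $k$ component. Each $\mathcal {M}^j_k$ is a $C_g^k(V)$-comodule.

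The key step is showing that each nonzero top piece $\mathcal {M}^j_k$ is a simple $C_g^k(V)$-comodule. I would prove this by duality. By Proposition \ref{Prop3.7}, $(\mathcal {M}^j)'$ is an irreducible admissible $g^{-1}$-twisted $V'$-module. Its degree $k$ piece is a module over $A_{k,g^{-1}}(V')$, and by the higher level theory of \cite{DLM3} this module is simple. By Proposition \ref{Prop7.4}, $A_{k,g^{-1}}(V')$ is identified with $C_g^k(V)^*$, so dualizing gives a simple $C_g^k(V)$-comodule. For finite dimensional pieces this dualization is harmless.

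The main obstacle is making this decomposition rigorous. One difficulty is the splitting $M=M^{\prime}\oplus M^{\prime\prime}$: it need not exist a priori, since that is essentially what we are trying to prove. To avoid it, I would instead work with a simple subcomodule and show that it has a complement. Alternatively, I would apply $\mathcal {L}_g^k$ to the quotient $M/M^{\prime}$ and lift the splitting back to $M$ using $g$-corationality of the comodule generated by $M$ inside $L_{g^{-1}}(M^*)'$. Of these, I would try the second route first.
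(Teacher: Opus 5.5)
Your overall strategy matches the paper's. The paper's proof of this theorem is just the Section 6 argument transposed to level $k$: produce $\mathcal{L}_g^k(M)$, invoke $g$-corationality, and read off the degree-$k$ piece. Your duality argument for the simplicity of the nonzero pieces $\mathcal{M}^j_k$ supplies a step the paper takes for granted.

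\textbf{The gap.} The genuine gap is your inductive step. It rests on a splitting $M=M'\oplus M''$ which, as you note yourself, is not available, and you carry out neither proposed repair. The route through $M/M'$ fails as stated. Suppose $M$ has a subcomodule $A$ such that $A$ and $M/A$ are both $C_g^{k-\frac{1}{T}}(V)$-comodules, $M/A$ is simple, but $M$ itself is not a $C_g^{k-\frac{1}{T}}(V)$-comodule. This is a non-split extension, which you cannot rule out before knowing that $C_g^k(V)$ is cosemisimple. Then $M'=A$ and $M/M'$ is a $C_g^{k-\frac{1}{T}}(V)$-comodule, so $\mathcal{L}_g^k$ does not apply to it. Moreover, ``lifting the splitting back to $M$'' is precisely the cosemisimplicity you are trying to prove.

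\textbf{The fix.} No splitting is needed. The proposition constructing $\mathcal{L}_g^k$ only requires that $M$ \emph{as a whole} is not a $C_g^{k-\frac{1}{T}}(V)$-comodule, i.e.\ $\Delta_M(M)\not\subseteq C_g^{k-\frac{1}{T}}(V)\otimes M$. So make a dichotomy on $M$ itself.
\begin{enumerate}
\item If $M$ is a $C_g^{k-\frac{1}{T}}(V)$-comodule, it is cosemisimple over $C_g^{k-\frac{1}{T}}(V)$ by induction. Since this is a subcoalgebra, every $C_g^k(V)$-subcomodule $N\subseteq M$ satisfies $\Delta_M(N)\subseteq (C_g^k(V)\otimes N)\cap(C_g^{k-\frac{1}{T}}(V)\otimes M)=C_g^{k-\frac{1}{T}}(V)\otimes N$. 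Hence the two subcomodule lattices coincide, and $M$ is a direct sum of simple $C_g^k(V)$-comodules.
\item Otherwise, apply $\mathcal{L}_g^k$ to all of $M$ and continue exactly as you do.
\end{enumerate}
Even more directly, the regular comodule $C_g^k(V)$ is not a $C_g^{k-\frac{1}{T}}(V)$-comodule unless the two coalgebras coincide (apply $\Id\otimes c$). So it suffices to apply $\mathcal{L}_g^k$ to it. The paper's one-line proof tacitly needs this same dichotomy.

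\textbf{Two smaller points in your key step.} First, Proposition \ref{Prop7.4}(iii) needs $\dim A_{k,g^{-1}}(V')<\infty$, which is not assumed here. However, surjectivity of $\Phi_g^k$ from part (i) already makes the $A_{k,g^{-1}}(V')$-submodules and the $C_g^k(V)^*$-submodules of $(\mathcal{M}^j_k)^*$ coincide. Second, irreducibility of $(\mathcal{M}^j)'$ is only automatic when the graded pieces of $\mathcal{M}^j$ are finite dimensional. It is safer to argue directly in degree $k$, where $\mathcal{M}^j_k\subseteq M$ is finite dimensional.
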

\proof  This is similar to section 6, we omit it.                                       $\hfill\Box$

\begin{Lemma}\cite{DLM3}\label{7.6}
 $\mathcal {V}$ is a $g^{-1}$-rational vertex operator algebra if and only if all its Zhu algebras $A_{k,g^{-1}}(\mathcal {V})$ are finite-dimensional semisimple associative algebras.
\end{Lemma}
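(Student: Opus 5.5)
This lemma is quoted from \cite{DLM3}, so the plan is to reassemble the argument of Dong--Li--Mason from the level-$k$ twisted Zhu theory rather than to prove anything new. Throughout, $g^{-1}$ has finite order $T$ and $A_{k,g^{-1}}(\mathcal{V})$, $k\in\frac{1}{T}\mathbb{N}$, are the higher level twisted Zhu algebras. I take two standard facts as given. First, for any admissible $g^{-1}$-twisted module $\mathcal{N}$, the space $\Omega_k(\mathcal{N})=\oplus_{j\leq k}N_j$ is an $A_{k,g^{-1}}(\mathcal{V})$-module. Second, there is an inverse induction functor $L_k$: it sends an $A_{k,g^{-1}}(\mathcal{V})$-module $U$ which does not factor through $A_{k-\frac{1}{T},g^{-1}}(\mathcal{V})$ to an admissible module with $N_k=U$, and it sends simple modules to irreducible ones. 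This is the analogue of the $L_{g^{-1}}(M^*)$ construction recalled in section 6. I also use the quotient maps $A_{k,g^{-1}}\to A_{k-\frac{1}{T},g^{-1}}$.

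For ``$\Rightarrow$'', assume $\mathcal{V}$ is $g^{-1}$-rational. I would first show that there are only finitely many irreducible admissible $g^{-1}$-twisted modules, each with finite-dimensional homogeneous spaces. The argument: if $A_{k,g^{-1}}(\mathcal{V})$ had infinitely many inequivalent simple modules, or one of infinite dimension, then applying $L_k$ and taking direct products would give an admissible module that is not completely reducible. This is the standard argument from the untwisted rational case, and I take it over.

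Next, every $A_{k,g^{-1}}$-module $U$ should be completely reducible. Decompose $U$ by the filtration $A_{k}\to A_{k-1/T}\to\cdots$. The piece not factoring through level $k-\frac{1}{T}$ induces via $L_k$ to an admissible module, which is a direct sum of irreducibles by rationality. Its degree-$k$ part recovers $U$ as a direct sum of simple modules. Together with finite-dimensionality, this makes $A_{k,g^{-1}}(\mathcal{V})$ a finite-dimensional algebra all of whose modules are semisimple, hence a semisimple algebra. In fact it should be isomorphic to $\oplus_{j\leq k}\oplus_{M}\End(M_j)$, the sum running over irreducible $M$.

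For ``$\Leftarrow$'', take an admissible module $\mathcal{N}$. Each $\Omega_k(\mathcal{N})$ is a semisimple $A_{k,g^{-1}}$-module. Choose the lowest weight spaces and generate submodules. I would then show $\mathcal{N}$ is a sum of irreducible submodules by induction on the grading. Given $w\in N_k$, the submodule generated by $w$ has its $k$-level equal to a semisimple $A_k$-module. Each simple $A_k$-summand $S$ either factors through a lower level, where induction applies, or generates an irreducible quotient $L_k(S)$. The step to check is that the submodule generated by $S$ is actually isomorphic to $L_k(S)$, rather than having a radical. Semisimplicity of $A_j$ for all $j$ rules out such a radical: any proper submodule would give a nonsplit extension at some finite level $j$, which is impossible. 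A sum of irreducibles is then a direct sum.

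The main obstacle is this last point: building the splitting from the semisimplicity of all $A_j$ simultaneously. Here I would rely directly on the detailed generalized-Verma construction in \cite{DLM3}.
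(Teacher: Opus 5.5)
The paper gives no proof of this lemma; it is quoted from \cite{DLM3}. So deferring the hard step to Dong--Li--Mason, as you do, is all the paper does. The problem is that the argument you build around that citation fails exactly where it carries weight.

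Write $A_j$ for $A_{j,g^{-1}}(\mathcal V)$. In $\Leftarrow$, everything rests on showing that $W=\langle S\rangle$ has no radical $R$, the largest submodule with $R_k=0$. Here $S\subseteq N_k$ is a simple $A_k$-submodule, so $W_k=S$. The mechanism you propose cannot do this.
\begin{itemize}
\item $A_j$ acts on $\Omega_j$ only through zero modes $o(a)$, and these preserve every homogeneous space. So for $i\le j$ the sequences $0\to R_i\to W_i\to (W/R)_i\to 0$ are sequences of $A_j$-modules, and they simply split once $A_j$ is semisimple.
\item Hence no nonsplit extension ever appears at a finite level. These degree-wise splittings also say nothing about the modes that change degree, which is where the non-splitness of $W$ lives.
\item Your reduction ``$S$ factors through a lower level, so induction applies'' has the same defect. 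Such an $S$ still sits in $N_k$, and an $A_k$-submodule of $N_k$ factoring through $A_{k-\frac1T}$ need not be killed by the modes lowering degree by $k$.
\item A concrete instance, already for $g=1$: take the universal Virasoro vertex operator algebra and the contragredient of the Verma module $M(c,0)$. The degree-one vector dual to the singular vector $L(-1)v$ spans an $A_1$-module factoring through $A_0$, yet $L(1)$ does not kill it. So irreducibility of $\langle S\rangle$ still has to be proved.
\end{itemize}
What is needed is a device that turns $A_j$-complements into $\mathcal V$-submodules. For example, the submodule generated by an $A_j$-submodule $Y\subseteq W_j$ has degree-$j$ part $A_jY=Y$. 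Applying this to an $A_j$-complement of $R_j$ shows that each $W_j$ either lies entirely in $R$ or meets it trivially. Even this leaves the case of a radical made of whole homogeneous spaces, i.e.\ $R_j=W_j$ and $(W/R)_j=0$ for some $j$. Excluding that case is exactly the content you defer to \cite{DLM3}, so as written your $\Leftarrow$ is a citation, not a proof.

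In $\Rightarrow$, the step ``decompose $U$ by the filtration $A_k\to A_{k-\frac1T}\to\cdots$'' presupposes the semisimplicity you are proving. $U$ has a largest submodule $U'$ factoring through $A_{k-\frac1T}$. Even granting that induction handles $U'$ and $L_k$ handles what lies outside it, nothing splits $0\to U'\to U\to U/U'\to 0$.

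The natural repair avoids splitting altogether. Apply $g^{-1}$-rationality to the level-$k$ generalized Verma module $\bar M_k(U)$ of \cite{DLM3}, built from all of $U$, whose degree-$k$ space is $U$. Complete reducibility then writes $U$ as a direct sum of degree-$k$ spaces of irreducible modules, each simple or zero over $A_k$. Combine this with your direct-product argument (finitely many irreducibles, finite-dimensional homogeneous spaces) and Schur's lemma, and you get finite-dimensional semisimplicity. So the $\Rightarrow$ direction is easily fixed; the genuine gap is in $\Leftarrow$.
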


\begin{Lemma}\cite{DNR}\label{7.7}
$C$ is a cosemisimple coassociative coalgebra if and only if its dual algebra $C^*$ is a semisimple associative algebra.
\end{Lemma}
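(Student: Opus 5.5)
This lemma is quoted from \cite{DNR} and is a classical fact. Throughout we use the convention fixed in this paper: ``semisimple'' for an algebra means a direct sum of simple ideals, and ``cosemisimple'' for a coalgebra means a direct sum of simple sub coalgebras. The plan is to pass through the module and comodule categories. We use Lemma \ref{coss}, which says $C$ is cosemisimple iff every $C$-comodule is cosemisimple. We also use the standard dictionary between $C$-comodules and rational $C^*$-modules: every left $C$-comodule $M$ becomes a $C^*$-module via $f\cdot m=\sum f(m'')m'$ (or via Proposition \ref{A-C}(iii) on the dual side). A subspace of $M$ is a subcomodule iff it is a $C^*$-submodule, so simple comodules are exactly simple rational modules.

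($\Rightarrow$) Suppose $C=\bigoplus_\alpha C_\alpha$ with each $C_\alpha$ a simple sub coalgebra. Each simple coalgebra is finite dimensional by the fundamental theorem of coalgebras. So $C^*=\prod_\alpha C_\alpha^*$, with each $C_\alpha^*$ a finite-dimensional simple algebra. The first thing to check is that the relevant setting is finite-dimensional, which is how the lemma is used in Theorem \ref{7.5}, where $\dim C_g^k(V)<\infty$. In that case the product is a finite direct sum of simple ideals, hence $C^*$ is semisimple in the sense of Definition \ref{DA2.2}.

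($\Leftarrow$) Suppose $C^*$ is semisimple. Every $C$-comodule is a $C^*$-module, hence a direct sum of simple $C^*$-modules. For a finite-dimensional $C$, every $C^*$-module is rational, so the simple summands are subcomodules and every comodule is cosemisimple. Lemma \ref{coss} then gives that $C$ is cosemisimple. Alternatively, dualize directly: $C\cong C^{**}$ decomposes as $\bigoplus A_j^*$ for the simple ideals $A_j$ of $C^*$, and each $A_j^*$ is a simple sub coalgebra because sub coalgebras of $A_j^*$ correspond to ideals of $A_j$ via annihilators.

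The main obstacle is the infinite-dimensional direction, where $C^*$ is a product rather than a sum. In that case one must either restrict to $\dim C<\infty$ (sufficient for the application here) or use the general statement from \cite{DNR}: $C$ is cosemisimple iff $C^*$ is semisimple as a topological or pseudocompact algebra. I would state and prove the lemma under $\dim C<\infty$, via the annihilator correspondence $D\mapsto D^\perp$ between sub coalgebras of $C$ and ideals of $C^*$, and cite \cite{DNR} for the general case.
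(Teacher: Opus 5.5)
The paper gives no argument for this lemma; it only cites \cite{DNR}. Your proof is correct for $\dim C<\infty$, and that is the only case the paper needs. The lemma is invoked in the final theorem (not in Theorem \ref{7.5}) with $C=C_g^k(V)$. That space is finite dimensional because $\dim A_{k,g^{-1}}(V')<\infty$, by the higher-level analogue of Lemma \ref{Lm5.3}.

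Of your two routes for ($\Leftarrow$), the annihilator one is preferable. The correspondence $D\mapsto D^{\perp}$ between sub coalgebras of $C$ and ideals of $C^*$ settles both directions at once, and it avoids Wedderburn theory and Lemma \ref{coss}. The comodule route has a small slip. With the paper's convention $\Delta_M(m)=\sum m'\otimes m''$, $m'\in C$, the action must be $f\cdot m=\sum f(m')m''$. It satisfies $f\cdot(g\cdot m)=(g*f)\cdot m$, so a left $C$-comodule is a right rational $C^*$-module, not a left one. This does no harm, since $C^*$ and its opposite have the same two-sided ideals and right modules over a finite-dimensional semisimple algebra are semisimple.

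Your caveat about infinite dimensions is right, and it can be made precise. Under the paper's definition, semisimple means a direct sum of simple ideals, necessarily finitely many because $C^*$ is unital. With that definition, ($\Rightarrow$) is false in general. Take $C=\bigoplus_{n\in\mathbb{N}}\mathbb{C}g_n$ with every $g_n$ grouplike. Then $C$ is cosemisimple, but $C^*\cong\prod_{n}\mathbb{C}$ has infinitely many central idempotents, so it is not a finite product of simple algebras.

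On the other hand, ($\Leftarrow$) holds with no extra hypothesis. A product of $n$ simple algebras has exactly $2^n$ ideals, and $D\mapsto D^{\perp}$ is injective, so $C$ has only finitely many sub coalgebras. By the fundamental theorem of coalgebras $C$ is then finite dimensional, and your argument applies.

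The correct general statement is $J(C^*)=C_0^{\perp}$, where $C_0$ is the coradical; equivalently, $C$ is cosemisimple iff $C^*$ is Jacobson semisimple. So the lemma as printed should carry the hypothesis $\dim C<\infty$, which costs nothing where it is applied.
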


\begin{Theorem}
Let $V$ be a graded vertex operator coalgebra and $g\in\Aut V$ a finite order automorphism of order $T$ such that $\dim A_{k,g^{-1}}(V')<\infty$ for all $k\in\frac{1}{T}\mathbb{N}$. Then $V$ is $g$-corational if and only if $V'$ is a $g^{-1}$-rational vertex operator algebra.
\end{Theorem}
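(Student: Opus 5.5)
We prove both directions by passing through the finite-dimensional algebras and coalgebras attached to each level $k\in\frac{1}{T}\mathbb{N}$. Since $\dim A_{k,g^{-1}}(V')<\infty$ for all $k$, the argument of Lemma \ref{Lm5.3} (applied level by level, with $O_{k,g^{-1}}(V')$ in place of $O_{g^{-1}}(V')$) gives $\dim C^k_g(V)<\infty$ for every $k$. Proposition \ref{Prop7.4}(iii) then identifies $C^k_g(V)^*\cong A_{k,g^{-1}}(V')$ as algebras, and dually $C^k_g(V)\cong A_{k,g^{-1}}(V')^*$ as coalgebras. By Lemma \ref{7.7}, $C^k_g(V)$ is cosemisimple if and only if $A_{k,g^{-1}}(V')$ is semisimple. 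By Lemma \ref{7.6}, $V'$ is $g^{-1}$-rational if and only if every $A_{k,g^{-1}}(V')$ is finite-dimensional and semisimple. The theorem therefore reduces to the equivalence
\[
V \text{ is } g\text{-corational} \iff C^k_g(V) \text{ is cosemisimple for all } k\in\tfrac{1}{T}\mathbb{N}.
\]

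The forward implication is exactly Theorem \ref{7.5}, applied for each $k$; the needed finiteness of $C^k_g(V)$ was obtained above.

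For the converse, I will avoid a direct comodule argument and use duality with modules instead. Suppose every $C^k_g(V)$ is cosemisimple. Then every $A_{k,g^{-1}}(V')$ is semisimple, so $V'$ is $g^{-1}$-rational by Lemma \ref{7.6}. Now let $\mathcal{M}$ be an admissible $g$-twisted $V$-comodule. By Proposition \ref{Prop3.7}(ii), $\mathcal{M}'$ is an admissible $g^{-1}$-twisted $V'$-module, hence a direct sum $\bigoplus_\alpha \mathcal{N}^\alpha$ of irreducibles.

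The step I expect to be the main obstacle is transporting this decomposition back to $\mathcal{M}$. The graded dual does not commute with infinite direct sums, and $\mathcal{M}''$ need not equal $\mathcal{M}$ when the homogeneous spaces $M_t$ are infinite-dimensional.

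I would first try to show that each $M_t$ is finite-dimensional. The idea is to use $g^{-1}$-rationality of $V'$: it gives finitely many irreducibles, each with finite-dimensional graded pieces, but this alone does not bound multiplicities, so I expect this route to fall short in general.

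Failing that, I would argue on the comodule side directly. Let $\mathcal{M}_\alpha\subseteq\mathcal{M}$ be the annihilator of $\bigoplus_{\beta\neq\alpha}\mathcal{N}^\beta$. This is a subcomodule, because annihilators of submodules of $\mathcal{M}'$ are subcomodules by the defining pairing in Proposition \ref{Prop3.7}. I would then show that $\mathcal{M}=\bigoplus_\alpha\mathcal{M}_\alpha$ degree by degree. Locally, each $m\in M_t$ generates a subcomodule whose degree-$t$ part is finite-dimensional: by the finiteness in the definition of comodules, only finitely many $\Delta_{\mathcal{M},k}(m)$ contribute in each degree. This localizes the problem to finite-dimensional pieces, where dualizing is harmless.

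Irreducibility of each $\mathcal{M}_\alpha$ would follow by the same duality, since $\mathcal{M}_\alpha'$ is a quotient of $\mathcal{N}^\alpha$ and hence equals $\mathcal{N}^\alpha$, which is irreducible. Alternatively, I would fall back on the level-$k$ functors $\mathcal{L}^k_g$ and $\Omega^k_g$ together with Lemma \ref{coss}: split off the subcomodule generated by $\Omega^k_g(\mathcal{M})$ level by level, using cosemisimplicity of $C^k_g(V)$ at each step.
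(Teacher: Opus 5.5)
Your forward direction is the paper's argument (Theorem \ref{7.5} with Proposition \ref{Prop7.4} and Lemmas \ref{7.6}, \ref{7.7}). Deriving $\dim C^k_g(V)<\infty$ from the argument of Lemma \ref{Lm5.3} usefully supplies a hypothesis the paper uses tacitly. The converse has a genuine gap. The paper stays on the comodule side: it runs a socle argument on $\mathcal{M}$ using cosemisimplicity of $C^k_g(V)$ level by level, which is close to your fallback, but your fallback is only a sketch.

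Your main route breaks at the claim $\mathcal{M}=\bigoplus_\alpha\mathcal{M}_\alpha$. A decomposition of $\mathcal{M}'$ into irreducibles is not canonical, and for a badly chosen one the annihilators do not span $\mathcal{M}$, even when everything is locally finite. For example:
\begin{itemize}
\item Let $\mathcal{N}$ be an irreducible admissible $g^{-1}$-twisted $V'$-module with finite-dimensional graded pieces, let $\mathcal{P}=\mathcal{N}'$, and let $\mathcal{M}=\mathcal{P}\otimes\mathbb{C}^{(\mathbb{N})}$ (countably many copies of $\mathcal{P}$). Then $\mathcal{M}'\cong\mathcal{N}\otimes\mathbb{C}^{\mathbb{N}}$.
\item Choose a basis $\{b_\alpha\}$ of $\mathbb{C}^{\mathbb{N}}$ all of whose members have first coordinate $1$, and set $\mathcal{N}^\alpha=\mathcal{N}\otimes b_\alpha$.
\item For $0\neq p\in\mathcal{P}$, the vector $p\otimes e_1$ pairs nontrivially with every one of the infinitely many $\mathcal{N}^\alpha$. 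A finite sum of elements of the $\mathcal{M}_\alpha$ pairs nontrivially with only finitely many of them. Hence $p\otimes e_1\notin\sum_\alpha\mathcal{M}_\alpha$, although $p\otimes e_1$ lies in the subcomodule $\mathcal{P}\otimes e_1$, which has finite-dimensional pieces.
\end{itemize}
So localization cannot rescue this step. Its justification is also wrong as stated. The axioms give only $\Delta_{\mathcal{M},k}(m)=0$ for $k\ll0$ and finiteness of each single $\Delta_{\mathcal{M},k}(m)$. For a fixed degree of the second tensor factor, every sufficiently large $k$ can contribute, with first factors of growing weight. Finite-dimensionality of the degree-$t$ piece alone would not let you dualize the other degrees either.

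The missing idea is to decompose the subcomodule generated by each element, not $\mathcal{M}'$, and to get its finiteness from rationality rather than from the axioms.
\begin{itemize}
\item For homogeneous $m\in M_t$, let $\mathcal{W}$ be the subcomodule it generates, and let $m^\perp\subseteq\mathcal{W}'$ be the kernel of pairing with $m$.
\item Annihilators of submodules of $\mathcal{W}'$ are subcomodules. Hence no nonzero submodule of $\mathcal{W}'$ lies in $m^\perp$; in particular every irreducible summand is nonzero in degree $t$.
\item $\mathcal{W}'$ is a quotient of $\mathcal{M}'$, hence completely reducible. By Dong--Li--Mason, $V'$ has only finitely many irreducible admissible $g^{-1}$-twisted modules, each with finite-dimensional graded pieces.
\item Group the summands by isomorphism type and grading shift; only finitely many groups occur. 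A group has the form $\mathcal{N}_i\otimes U$. Let $U^0\subseteq U$ consist of those $u$ with $N_{i,t}\otimes u\subseteq m^\perp$, where $N_{i,t}$ is the relevant degree-$t$ piece. Then $U^0$ has codimension at most $\dim N_{i,t}$, and $\mathcal{N}_i\otimes U^0$ is a submodule inside $m^\perp$. So $U^0=0$ and $\dim U\leq\dim N_{i,t}$.
\item Thus $\mathcal{W}'$ is a finite direct sum of irreducibles with finite-dimensional pieces. So $\mathcal{W}$ has finite-dimensional pieces, and $\mathcal{W}=\mathcal{W}''$ is a finite direct sum of irreducible comodules.
\item Finally $\mathcal{M}=\sum_m\mathcal{W}$ is a sum of irreducible subcomodules, hence a direct sum of them by the usual Zorn argument.
\end{itemize}
With this repair, your duality argument gives a genuinely different and more transparent proof of the converse than the paper's socle argument. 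It uses the coalgebras $C^k_g(V)$ only in the forward direction.
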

\proof By Proposition \ref{Prop7.4}, Theorem \ref{7.5}, Lemma \ref{7.6} and \ref{7.7}, it is obvious that if $V$ is $g$-corational, then $V'$ is $g^{-1}$-rational.

Conversely, it is enough to show if all $C_g^k(V)$ are cosemisimple, then $V$ is $g$-corational. For any admissible $g$-twisted $V$-comodule $\mathcal {M}=\oplus_{i\in\mathbb{N}}M_i$, let $\soc(\mathcal {M})$ be the sum of all simple sub $g$-twisted $V$-comodules of $\mathcal {M}$. If $\soc(\mathcal {M})=\mathcal {M}$, we are done.

If not, there is $k\in\frac{1}{T}\mathbb{N}$ such that $\soc(\mathcal {M})_k\neq M_k$. Suppose $k$ is the smallest positive number such that $\soc(\mathcal {M})_k\neq M_k$. Since $C_g^k(V)$ is cosemisimple, we have $M_k$ is a cosemisimple $C_g^k(V)$-comodule, and $\soc(\mathcal {M})_k$ is a proper sub comodule of $M_k$. Thus there is at least one simple sub comodule $X_k$ of $M_k$ intersects $\soc(\mathcal {M})_k$ trivially. Suppose the irreducible admissible $g$-twisted $V$-comodule corresponding $X_k$ is $\mathcal {Z}$.

Let $\mathcal {X}$ be the admissible $g$-twisted sub $V$-comodule of $\mathcal {M}$ generated by $X_k$. If $\mathcal {X}$ is a simple admissible $g$-twisted $V$-comodule, then $\mathcal {X}\subseteq\soc(\mathcal {M})$, this is impossible. Hence $\mathcal {X}$ is reducible. Now $\mathcal {Z}$ is the unique irreducible quotient of $\mathcal {X}$. If there is $l$, such that $X_l\neq Z_l$. Since $X_l$ is a cosemisimple $C_g^l(V)$-comodule, there is a $C_g^l(V)$-comodule $S_l$ such that $X_l=Z_l\oplus S_l$. Let $\mathcal {S}$ be the admissible $g$-twisted sub $V$-comodule of $\mathcal {X}$ generated by $S_l$. Then we know $\mathcal {S}_k=0.$ Let $\mathcal {T}$ be an irreducible quotient of $\mathcal {S}$, then it is also an irreducible quotient of $\mathcal {X}$, which is different from $\mathcal {Z}$. This is a contradiction. Hence $\mathcal {X}=\mathcal {Z}$, and $\mathcal {X}$ is irreducible. This is also a contradiction, and this contradiction means $\soc(\mathcal {M})=\mathcal {M}$, i.e., $\mathcal {M}$ is completely reducible. Thus $V$ is $g$-corational.

This completes the proof.                                $\hfill\Box$

From \cite{DLM3}, we know that there is an anti-isomorphism between $A_{g,k}(\mathcal {V})$ and $A_{g^{-1},k}(\mathcal {V})$ for all $k\in\frac{1}{T}\mathbb{N}$, and an associative algebra $A$ is semisimple if and only if its opposite algebra $A^{op}$ is semisimple. Thus $\mathcal {V}$ is $g$-rational if and only if $\mathcal {V}$ is $g^{-1}$-rational \cite{DJ1}. Hence we get

\begin{Theorem}
Let $V$ be a graded vertex operator coalgebra and $g\in\Aut V$ a finite order automorphism of order $T$ such that $\dim A_{k,g}(V')<\infty$ for all $k\in\frac{1}{T}\mathbb{N}$. Then $V$ is $g$-corational if and only if $V'$ is a $g$-rational vertex operator algebra.
\end{Theorem}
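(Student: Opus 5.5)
The plan is to reduce this statement to the preceding theorem, which already shows that, when $\dim A_{k,g^{-1}}(V')<\infty$ for all $k\in\frac{1}{T}\mathbb{N}$, $V$ is $g$-corational if and only if $V'$ is $g^{-1}$-rational. What remains is to replace $g^{-1}$ by $g$ in two places: in the finiteness hypothesis and in the rationality conclusion.

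\textbf{Step 1: the hypotheses match.} By \cite{DLM3}, for each $k\in\frac{1}{T}\mathbb{N}$ there is an anti-isomorphism of associative algebras $A_{k,g}(V')\to A_{k,g^{-1}}(V')$. Here we use that $g\in\Aut V'$ by Proposition \ref{Prop3.2}, so $g^{-1}\in\Aut V'$ as well, and both have order $T$. In particular the two algebras have the same dimension. Hence the assumption $\dim A_{k,g}(V')<\infty$ for all $k$ gives $\dim A_{k,g^{-1}}(V')<\infty$ for all $k$, and the preceding theorem applies.

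\textbf{Step 2: the conclusions match.} After Step 1 the preceding theorem gives that $V$ is $g$-corational if and only if $V'$ is $g^{-1}$-rational. It remains to show that $V'$ is $g^{-1}$-rational if and only if it is $g$-rational.

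\begin{itemize}
\item By Lemma \ref{7.6}, $V'$ is $g^{-1}$-rational if and only if every $A_{k,g^{-1}}(V')$ is finite-dimensional and semisimple.
\item Applying Lemma \ref{7.6} with $g^{-1}$ in place of $g$, $V'$ is $g$-rational if and only if every $A_{k,g}(V')$ is finite-dimensional and semisimple.
\item The anti-isomorphism of Step 1 identifies $A_{k,g}(V')$ with $A_{k,g^{-1}}(V')^{op}$.
\item An associative algebra $A$ is semisimple if and only if $A^{op}$ is. This holds because ideals of $A$ and of $A^{op}$ coincide as subspaces, so a decomposition of $A$ into simple ideals is also one of $A^{op}$.
\end{itemize}

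Together these give that $V'$ is $g^{-1}$-rational if and only if it is $g$-rational; this is also the statement of \cite{DJ1}. Chaining with Step 1 yields the theorem.

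\textbf{Main obstacle.} The only point needing care is the application of the \cite{DLM3} anti-isomorphism. It must be applied to the vertex operator algebra $V'$ with the automorphism induced by $g$ as in Proposition \ref{Prop3.2}, and the higher-level index $k\in\frac{1}{T}\mathbb{N}$ must be handled in the same way for $g$ and $g^{-1}$. This is consistent because $g$ and $g^{-1}$ have the same order $T$, and the eigenspace $V'^{,(r)}$ for $g^{-1}$ is the eigenspace $V'^{,(T-r)}$ for $g$.

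Everything else is routine bookkeeping with the earlier theorem and Lemmas \ref{7.6} and \ref{7.7}.
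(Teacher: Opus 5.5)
Your proposal is correct and follows the paper's route. The paper likewise derives this theorem from the preceding $g^{-1}$-version, using three ingredients: the anti-isomorphism between $A_{k,g}(V')$ and $A_{k,g^{-1}}(V')$ from \cite{DLM3}; the fact that semisimplicity passes to the opposite algebra; and the resulting equivalence of $g$- and $g^{-1}$-rationality \cite{DJ1}. You also transfer the finiteness hypothesis $\dim A_{k,g}(V')<\infty$ to $\dim A_{k,g^{-1}}(V')<\infty$ through the same anti-isomorphism, which the paper leaves implicit.
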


\begin{Remark}
This is a generalization of a classical result, i.e., $A$ is a semisimple associative algebra if and only if its dual coassociative coalgebra $A^*$ is cosemisimple.
\end{Remark}

\end{document}